\newcommand{\Z}{\mathbb{Z}}
\newcommand{\C}{\mathbb{C}}
\newcommand{\A}{\mathcal{A}}
\newcommand{\inv}{^{-1}}
\newcommand{\Hom}{\text{\rm Hom}}
\newcommand{\Mat}{\text{\rm Mat}}
\newcommand{\GL}{\text{\rm GL}}
\renewcommand{\O}{\mathcal{O}}
\newcommand{\cF}{\mathcal{F}}
\newcommand{\cC}{\mathcal{C}}
\newcommand{\cO}{\mathcal{O}}
\newcommand{\cM}{\mathcal{M}}
\newcommand{\U}{\mathcal{U}}
\newcommand{\cA}{\mathcal{A}}
\newcommand{\Spec}{\text{\rm Spec}}
\newcommand{\Proj}{\text{\rm Proj}}
\newcommand{\id}{\text{\rm Id}}
\newcommand{\dmod}{\text{\sf -mod}}
\newcommand{\Rep}{\mathsf{Rep}}
\newcommand{\End}{\text{\rm End}}
\newcommand{\Loc}{\mathsf{Loc}}
\newcommand{\Dq}{\mathcal{D}_q} 
\newcommand{\Dqc}{\mathcal{D}_q(\C)}
\newcommand{\Dqn}{\mathcal{D}_q(\C^n)}
\newcommand{\Zln}{Z_n}
\newcommand{\qq}{q^2}
\newcommand{\elm}{E}
\newcommand{\D}{\mathcal{D}}
\newcommand{\Dql}{{\mathcal D}_\lambda}
\newcommand{\qmm}{\tilde \mu}
\newcommand{\phidaggerstar}{\phi^{\dagger, \#}}
\newcommand{\Tell}{T^{(\ell)}}
\newcommand{\Kell}{K^{(\ell)}}
\newcommand{\Hell}{H^{(\ell)}}
\newcommand{\eell}{^{(\ell)}}
\newcommand{\Frl}{\text{\rm Fr}_\ell}
\newcommand{\Dqk}{\D_q(\mathcal K)}
\newcommand{\Xk}{\mathfrak{M}(\mathcal K)}
\newcommand{\frakM}{\mathfrak{M}}
\newcommand{\Xkl}{\Xk^{(\ell)}}
\newcommand{\Dqyk}{\D_{q}^Y(\mathcal K)} 
\newcommand{\oxr}{\O(X)[r\inv]}
\newcommand{\oyr}{\O(Y)[r\inv]}
\newcommand{\Fqmm}{Frobenius quantum moment map }
\newcommand{\g}{\mathfrak{g}}
\newcommand{\frakt}{\mathfrak{t}}
\newcommand{\frakk}{\mathfrak{k}}
\newcommand{\Lie}{\text{\rm Lie}}
\theoremstyle{plain}
\newtheorem{theorem}{Theorem}[section]
\newtheorem{prop}[theorem]{Proposition}
\newtheorem{lemma}[theorem]{Lemma}
\newtheorem{cor}[theorem]{Corollary}
\newtheorem{conj}[theorem]{Conjecture}
\theoremstyle{definition}
\newtheorem{definition}[theorem]{Definition}
\newtheorem{example}[theorem]{Example}
\newtheorem{rmk}[theorem]{Remark}
\newtheorem{notation}[theorem]{Notation}
\numberwithin{equation}{section}
\begin{document} 
\title{Quantizations of multiplicative hypertoric varieties at a root of unity}
\author{Iordan Ganev}% \fnref{now}}
%\date{August 2016}
\address{The University of Texas at Austin, Department of Mathematics, Austin, Texas, USA 78712}
\maketitle

\parskip = 0pt 

\begin{abstract} We construct quantizations of multiplicative hypertoric varieties using an algebra of $q$-difference operators on affine space, where $q$ is a root of unity in $\C$. The quantization defines a matrix bundle (i.e.\ Azumaya algebra) over the multiplicative hypertoric variety and admits an explicit finite \'etale splitting. The global sections of this Azumaya algebra is a hypertoric quantum group, and we prove a localization theorem. We introduce a general framework of Frobenius quantum moment maps and their Hamiltonian reductions; our results shed light on an instance of this framework.  \end{abstract}

%{\Small {\bf Keywords}: Hypertoric variety; $q$-difference operators; Azumaya algebra; localization.}

\tableofcontents
\section{Introduction} \label{sec:intro}

In this paper, we use $q$-difference operators to construct a class of Azumaya algebras on multiplicative hypertoric varieties, for $q$ a root of unity in $\C$. We identify their algebras of global sections with central reductions of hypertoric quantum groups. This construction leads to a description of representations of the hypertoric quantum group at a root of unity as coherent sheaves on the multiplicative hypertoric variety, via an analogue of the Beilinson-Bernstein localization theorem.

Our results can be understood as an instance of the paradigm that Azumaya algebras descend to Azumaya algebras under Hamiltonian reduction. This paradigm is formalized by the notion of Frobenius quantum moment maps and their Hamiltonian reductions, resulting in a framework that captures the appropriate level of generality to construct and study Azumaya algebras over a swath of classical moduli spaces constructed by group-valued Hamiltonian reduction. This framework encompasses: (1) the present setting of multiplicative hypertoric varieties, (2) Hilbert schemes and double affine Hecke algebras \cite{VV}, and (3) quantum $D$-modules on flag varieties \cite{BaKr} -- each of which lead to Azumaya algebras over their classical degenerations -- and also (4) multiplicative quiver varieties \cite{DJ}, where such results are expected.  Frobenius quantum Hamiltonian reduction is also a natural quantum/multiplicative analogue of the framework of \cite{BFG} in positive characteristic, deepening the rich parallels between the representation theory of quantum groups at roots of unity and universal enveloping algebras in positive characteristic. 

We briefly describe the notion of a Frobenius quantum moment map. Let $G$ be a reductive algebraic group over $\C$ with Lie algebra $\g$ and let $q$ be a primitive $\ell$-th root of unity. Let $U_q\g$ denote the (unrestricted) quantum group and let $\cO_q(G)\subset U_q\g$ denote the left coideal subalgebra of  ad-locally finite elements\footnote{Hence, $\cO_q(G)$ denotes the ad-equivariant quantized coordinate algebra, as opposed to the bi-equivariant `FRT' algebra.}. The algebra $\O_q(G)$ contains a central subalgebra $\O_q\eell(G)$ isomorphic to the algebra $\O(G)$ of functions on $G$.  Let $u_q\g$ denote\footnote{The algebra $u_q\g$ is a subalgebra of a quotient of Lusztig's small quantum group.} the quotient of $\O_q(G)$ by the ideal generated by the augmentation ideal of $\O_q\eell(G)$.
  
\begin{definition}\label{def:fqmm} A {\it \Fqmm} is the data of an algebra $A$ in the category of locally finite $U_q\mathfrak{g}$-modules, a central subalgebra $A^{(\ell)}\subset A$, and a quantum moment map $\mu_q :\cO_q(G)\to A$, such that $\mu_q (\cO^{(\ell)}_q(G)) \subset A^{(\ell)}$.  We denote by $\mu_q^{(\ell)}: \cO^{(\ell)}_q(G) \to A^{(\ell)}$ the restriction.
\end{definition}

\renewcommand{\qmm}{\mu_q}
\newcommand{\mufiber}{\mu_{\text{fiber}}}

Suppose that $A$ is Azumaya over $A\eell$. Thus, $A$ defines a vector bundle on $\Spec(A\eell)$ whose fibers are matrix algebras (assuming, for example, that $A\eell$ is finitely generated over $\C$).  Fix a point $\lambda \in \Spec(A\eell)$ lying over the augmentation ideal of $\O_q\eell(G)$, and an identification of the fiber of $A$ over $\lambda$ with the matrix algebra $\Mat_n(\C)$ for some $n$. Then there is an induced quantum moment map on fibers $\mufiber : u_q\g \rightarrow \Mat_n(\C)$ making the diagram below commute:
\begin{equation}\label{diag:fqmm}\xymatrix{ A\eell  \ar@{^{(}->}[rr] && A \ar[rr] && \Mat_n(\C)\\
\cO^{(\ell)}_q(G) \ar@{^{(}->}[rr] \ar[u]^{\qmm^{(\ell)}} && \cO_q(G) \ar[u]^{\qmm} \ar[rr] && u_q\g \ar[u]^{\mufiber}}.\end{equation}
The homomorphism $\mu_q\eell$ corresponds to a group-valued moment map $\Spec(A\eell) \rightarrow G$ in the sense of \cite{AMM}, possibly under additional hypotheses. Multiplicative GIT Hamiltonian reduction along $\qmm\eell$ produces a variety $\cM$, and it is expected that quantum Hamiltonian reduction along $\mu_q$, with respect to an appropriate integral form of the quantum group, defines an Azumaya algebra on $\cM$. 

\renewcommand{\qmm}{\mu}

In this paper, we establish results about Frobenius quantum moment maps in one of the simplest and most explicit contexts, namely, when $G = K$ is a subtorus of $(\C^\times)^n$ and $A = \Dqn$ is a certain algebra of $q$-difference operators on $\C^n$. In this case,  $\cM$ is a so-called multiplicative hypertoric variety, and we are naturally led to hypertoric geometry -- a hyperk\"ahler analogue of toric geometry.

Hypertoric varieties were introduced by Bielawski and Dancer \cite{BD}, and are one of the main examples of  hyperk\"ahler symplectic manifolds. As such, they have become a central object in modern representation theory (\cite{BeKu, BLPW:catO, HauselStrumfels, McBreenShenfeld, Stadnik}). Multiplicative hypertoric varieties are group-like analogues of (ordinary)  hypertoric varieties, and retain many of their nice algebraic and geometric properties. In particular, they are symplectic varieties, and may be explicitly constructed via a group-valued Hamiltonian reduction procedure. The quotients appearing in this construction may be taken either as GIT or affine quotients; the former choice produces a symplectic resolution of the latter.  The resulting symplectic resolution is not conical, so multiplicative hypertoric varieties provide an interesting class of examples to explore in order to extend the work of \cite{BLPW:symres, BDMN} beyond the conical setting. Whereas differential operators feature in the quantization of (ordinary) hypertoric varieties, multiplicative hypertoric varieties are quantized by considering $q$-difference operators, which in turn admit a close link to quantum groups. 

Let us now outline the main results of this paper. Fix a subtorus $K$ of $(\C^\times)^n$. Let $\frakM$ be a smooth multiplicative hypertoric variety corresponding to $K$, and let $\frakM\eell$ denote its Frobenius twist. There is a GIT quotient map $\pi : X^{\text{ss}} \rightarrow \frakM\eell$, where $X^{\text{ss}}$ is the set of semistable points of the fiber of a certain torus-valued moment map on $T^*\C^n$. A quotient of $\Dqn$ by a quantum moment map ideal defines a coherent sheaf on $X$, and we consider the pushforward of this sheaf along $\pi$. Taking $K$-invariants, we obtain a sheaf of algebras $\D$ over $\frakM\eell$. The sheaf $\D$ can be regarded as a quantization of the algebra of functions on the (non-twisted) multiplicative hypertoric variety $\frakM$. 

\begin{theorem}[Theorem \ref{thm:Dqkazumaya}] The sheaf $\D$ is an Azumaya algebra over $\frakM\eell$. \end{theorem}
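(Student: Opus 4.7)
\emph{Proof plan.} The strategy is to verify the Azumaya property by faithfully flat descent along the GIT quotient $\pi\colon X^{\text{ss}} \to \frakM\eell$, which is a principal $K$-bundle by smoothness of $\frakM$. The argument breaks naturally into three steps.

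First, I would establish that $\Dqn$ itself is Azumaya of rank $\ell^{2n}$ over its $\ell$-center. At a primitive $\ell$-th root of unity the elements $x_i^\ell$ and $\partial_i^\ell$ are central, and together they generate a copy of $\O(T^*\C^n)$ inside $\Dqn$ via the Frobenius morphism $\Frl$; one then exhibits an explicit local trivialization of $\Dqn$ as a matrix algebra, closely mirroring the corresponding statement for crystalline differential operators in positive characteristic. This identifies $\Dqn$ as an Azumaya algebra over (a Zariski open of) the Frobenius twist of $T^*\C^n$.

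Second, I would exploit the Frobenius property of the quantum moment map $\qmm_q\colon \O_q(K) \to \Dqn$. By Definition \ref{def:fqmm}, its restriction $\qmm_q\eell$ to $\O_q\eell(K) \cong \O(K)$ takes values in the $\ell$-center of $\Dqn$ and, via $\Frl$, realizes the classical group-valued moment map $T^*\C^n \to K$. Consequently, quotienting $\Dqn$ by the quantum moment ideal corresponds on the level of $\ell$-centers to base-change along the inclusion of the classical fiber $X \subset T^*\C^n$; since Azumaya algebras are preserved under flat base change, the resulting sheaf $\tilde\D$ is Azumaya over $X^{\text{ss}}$. Finally, smoothness of $\frakM$ forces $K$ to act freely on $X^{\text{ss}}$, so $\pi$ is étale-locally trivial; descent applied to the natural $K$-equivariant structure on $\tilde\D|_{X^{\text{ss}}}$ then identifies $\D = (\pi_*\tilde\D)^K$ with an Azumaya algebra on $\frakM\eell$ satisfying $\pi^*\D \cong \tilde\D|_{X^{\text{ss}}}$.

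I expect the principal obstacle to lie in the middle step, namely showing that the moment-map quotient stays Azumaya over the \emph{entire} semistable locus rather than merely generically. This reduces to verifying that $\Dqn$ is flat over the image of $\qmm_q\eell$ along $X^{\text{ss}}$ -- equivalently, that the moment ideal is cut out by a regular sequence at every semistable point. This should follow from the explicit trivializations of Step 1 together with standard transversality for multiplicative moment maps at regular values, a condition that is implicit in the smoothness hypothesis on $\frakM$.
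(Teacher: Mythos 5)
Your Step 1 matches the paper (Theorem \ref{thm:Dqnazumaya}), and your Step 3 is in the right spirit, but Step 2 contains a genuine error that removes the heart of the argument. The quantum moment map $\mu_{q,K}$ does \emph{not} take values in the center of $\Dqn$ --- only its $\ell$-th powers do. Consequently the quotient $\Dqn/\Dqn(\mu_{q,K}-\eta)$ is a quotient by a left ideal generated by non-central elements: it is a left module with a $K$-action, not an algebra, and it is certainly not obtained from $\Dqn$ by flat base change along $\O(X)\subset\O(T^*\C^n)^{(\ell)}$. (The honest base change $\Dqn\otimes_{\Zln}\O(X)$ has fibers $\Mat(\ell^n)$; the moment-ideal quotient has fibers of dimension $\ell^{2n-d}$, which are proper quotient modules of $\Mat(\ell^n)$.) So the sheaf $\widetilde{\D_\eta}|_{X^{\text{ss}}}$ you propose to descend is not an Azumaya algebra on $X^{\text{ss}}$, and your worry about regular sequences is beside the point: no amount of flatness or transversality repairs the non-centrality of the quantum moment map.

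What is actually needed --- and what the paper supplies in Propositions \ref{prop:gammainvariants} and \ref{prop:fiberHamiltonian} --- is a fiberwise quantum Hamiltonian reduction computation: over a semistable point $\lambda$, the stalk of $\D$ at $\pi(\lambda)$ is identified with the $\Gamma$-invariants $(\D_P)^\Gamma$ of the fiber of the moment-ideal quotient (here $\Gamma=\ker(K\to K^{(\ell)})$ is the stabilizer of $\lambda$ in its free $K^{(\ell)}$-orbit), and one must prove directly that $(\Mat(\ell^n)/J)^\Gamma\simeq\Mat(\ell^{n-d})$ for the relevant left ideal $J$. This is done by an explicit combinatorial analysis of which elementary matrices are $\Gamma$-invariant and which columns are killed by $J$; it is essential that the quotient by the left ideal and the passage to invariants are combined, since $\Mat(\ell^n)^\Gamma\simeq\Mat(\ell^{n-d})^{\oplus\ell^d}$ alone is not even a central algebra. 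Once the stalks are known to be matrix algebras of constant rank $\ell^{2(n-d)}$, the local characterization of Azumaya algebras finishes the proof. Your proposal omits this computation entirely and asserts in its place a statement that is false.
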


Any Azumaya algebra is locally the endomorphism algebra of a vector bundle, but this property may or may not hold globally. When it does, the Azumaya algebra is said to {\it split}. We show that, while $\D$ itself does not split, its pullback along a finite \'etale cover of $\frakM\eell$ splits. Furthermore, $\D$ splits over the fibers of the affinization map of $\frakM\eell$ (which is a symplectic resolution), as well as over the fibers of a certain `residual' moment map on $\frakM\eell$. Let $\U = \Gamma(\frakM\eell, \D)$ be the algebra of global sections of  $\D$. The algebra $\U$ is (a localization of) a central reduction of a hypertoric quantum group and there are adjoint functors $\Loc: \U\dmod \leftrightarrows \D\dmod : \Gamma.$ We apply general principles, developed in \cite{BeKa, BMR}, to prove the following localization theorem:

\begin{theorem}[Theorem \ref{thm:localization}] For generic GIT parameter values, the derived functors of $\Loc$ and $\Gamma$  define inverse equivalences of triangulated categories $$D^b\left(\U\dmod\right) \stackrel{\sim}{\longrightarrow} D^b\left(\D\dmod\right).$$ \end{theorem}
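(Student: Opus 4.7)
The plan is to apply the general framework for derived localization on symplectic resolutions developed by Bezrukavnikov-Kaledin and Bezrukavnikov-Mirković-Rumynin. Two pieces of input drive the argument: (i) $\D$ is Azumaya over the smooth symplectic variety $\frakM\eell$ (the previous theorem), and (ii) $\frakM\eell$ is a projective symplectic resolution of its affinization, where the stability condition is controlled by the GIT parameter which determines an ample cone.

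The first step is to establish that both categories have finite homological dimension. The category $\D\dmod$ has finite homological dimension since $\D$ is Azumaya on the smooth variety $\frakM\eell$, so it is étale-locally Morita equivalent to $\O_{\frakM\eell}$, whose homological dimension equals $\dim \frakM\eell$. For $\U = \Gamma(\frakM\eell, \D)$, finite homological dimension follows from the exactness of $\Gamma$ on an appropriate generating class together with the fact that $\Loc$ sends projective $\U$-modules to locally projective $\D$-modules. I would also set up the derived adjunction $\Loc = \D \otimes_\U^{\mathbf{L}} (-) \dashv R\Gamma$ and verify that $\Loc(\U) = \D$ tautologically.

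The main technical step, and where the hypothesis of generic GIT parameter enters, is the cohomological vanishing
$$R^i \Gamma(\frakM\eell, \D \otimes L) = 0 \quad \text{for } i > 0,$$
for a sufficiently ample family of line bundles $L$ on $\frakM\eell$ pulled back from the GIT construction. Because $\D$ splits over the fibers of the affinization map $\pi : \frakM\eell \to \Spec\,\Gamma(\frakM\eell, \O)$, as noted in the paragraph preceding the theorem, computations of $R^i \Gamma$ on $\D$-modules reduce (via the splitting bundle) to ordinary coherent cohomology of vector bundles on $\frakM\eell$. Grauert-Riemenschneider vanishing for the symplectic resolution $\pi$ gives $R^i \pi_* \O_{\frakM\eell} = 0$ for $i > 0$, and twists by sufficiently ample line bundles for generic GIT parameter produce the desired global vanishing. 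In particular one obtains that $R\Gamma(\D) = \U$ is concentrated in degree zero, so the unit $\id \to R\Gamma \circ \Loc$ is an isomorphism when evaluated on $\U$.

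Finally, the equivalence follows by the usual thick-subcategory argument: $\D$ is a local generator of $\D\dmod$ (being a progenerator locally as an Azumaya algebra), so $R\Gamma$ is conservative on the triangulated category it generates; the vanishing above upgrades the adjunction unit and counit to isomorphisms on $\U$ and $\D$, and hence on the thick subcategories they generate, which exhaust $D^b(\U\dmod)$ and $D^b(\D\dmod)$ respectively. The main obstacle I anticipate is pinning down the precise genericity condition on the GIT parameter that produces the required vanishing for twists of $\D$; this amounts to identifying an appropriate region in the ample cone of the symplectic resolution interacting well with the Azumaya structure, and is the hypertoric analogue of the regular integral weight condition in the original Beilinson-Bernstein setting and of the "unramified Harish-Chandra character" condition in the BMR localization theorem.
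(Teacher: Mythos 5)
Your overall architecture (Azumaya property, Grauert--Riemenschneider vanishing, the BMR/BeKa Morita-type formalism) matches the paper's, but there are two concrete gaps, one of which is a step that would fail as written.

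First, your mechanism for the key vanishing $R^i\Gamma(\frakM\eell,\D)=0$, $i>0$, does not work. You propose to reduce to coherent cohomology ``via the splitting bundle'' using the fact that $\D$ splits over fibers of the affinization map. Splitting over fibers of $\nu\eell$ gives fiberwise Morita equivalences but does not compute global cohomology on $\frakM\eell$; there is no global splitting bundle on $\frakM\eell$ (the paper shows $\D$ splits only after pulling back along a finite \'etale cover). The paper's actual device is Proposition \ref{prop:assocgrad}: as a coherent sheaf (forgetting the algebra structure), $\Dqk\simeq(\Frl)_*\O_{\Xk}$. Since $\Frl$ is finite, $R\Gamma(\frakM\eell,\D)=R\Gamma(\frakM,\O_{\frakM})$, and then Grauert--Riemenschneider for the birational projective map $\nu:\frakM\to\frakM_0$, together with $\omega_{\frakM}=\O_{\frakM}$ and affineness of $\frakM_0$, gives concentration in degree zero. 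No ample twists $\D\otimes L$ enter the argument at all, and genericity of the GIT parameter plays no role in this vanishing --- it holds for any smooth $\mathcal K$.

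Second, you assert that finite homological dimension of $\U$ (equivalently, of the functor $\Loc$) ``follows from the exactness of $\Gamma$ on an appropriate generating class together with the fact that $\Loc$ sends projective $\U$-modules to locally projective $\D$-modules.'' This is not an argument: $\Loc=\D\otimes_{\U}(-)$ is right exact but not exact, and sending projectives to locally projectives says nothing about the length of flat resolutions of general $\U$-modules. In the paper this is precisely the point that is \emph{not} proved: the bounded derived equivalence is stated under the explicit hypothesis that $\Loc$ has finite homological dimension, and the remark following the theorem flags finding explicit conditions on $(\eta,\chi)$ guaranteeing this as an open problem (the analogue of Stadnik's conjectures). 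Unconditionally, the paper only obtains the equivalence $\langle\D\rangle\simeq D(\U\dmod)$ via compact generation, and the abelian equivalence when $\eta$ is a regular value (in which case $\frakM\eell$ is affine and the statement is essentially immediate). Your proposal treats the hardest unresolved point as routine.
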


Our results are inspired by, and parallel to, a number of Azumaya phenomena and localization theorems that occur in geometric representation theory over positive characteristic. For example,  Bezrukavnikov, Mirkovi\'c, and Rumynin \cite{BMR} prove a localization theorem relating the universal enveloping algebra of a semisimple Lie algebra in positive characteristic to the sheaf $\D(G/B)$ of (crystalline) differential operators on the corresponding flag variety. A key fact is that  $\D(G/B)$ is Azumaya over  the Frobenius-twisted cotangent bundle $T^*(G/B)^{(1)}$. In \cite{Stadnik}, Stadnik  gives a construction of an Azumaya algebra on hypertoric varieties in positive characteristic whose global sections is a central reduction of the corresponding hypertoric enveloping algebra. The starting point is the Azumaya property of the Weyl algebra of differential operators in positive characteristic, and a consequence is a localization theorem for hypertoric enveloping algebras.

Azumaya algebras also have a precedent in the context of quantizations at a root of unity. Backelin and Kremnitzer  \cite{BaKr} prove a localization theorem relating the representation theory of the quantum group $U_q\g$ at a root of unity to modules for a quantum counterpart $\Dq(G/B)$ of the algebra of differential operators on the flag variety. They show that $\Dq(G/B)$ is Azumaya over a dense open subset of the cotangent bundle $T^*(G/B)$. Varagnolo and Vasserot \cite{VV} use quantum differential operators on $\GL_n$ at a root of unity to construct an Azumaya algebra on a deformed Hilbert scheme. The global sections of this algebra is the spherical double affine Hecke algebra at a root of unity, and a localization theorem holds. 

The present work combines aspects of \cite{Stadnik} and \cite{BaKr} by using a $q$-deformation of the Weyl algebra at a root of unity to define an Azumaya algebra on a multiplicative hypertoric variety (in characteristic zero). The proof of our localization theorem adopts the same techniques as those appearing in \cite{BMR}.  We remark that Bellamy and Kuwabara construct quantizations of hypertoric varieties using W-algebras \cite{BeKu}. Their construction does not hold in the multiplicative setting, but can be viewed as a degeneration of the one presented here. We also note that Levitt and Yakimov consider certain algebraic properties of quantized Weyl algebras at a root of unity \cite{LY}.

In  current work-in-progress, various aspects of which are joint with Nicholas Cooney and David Jordan, we intend to generalize the construction of the sheaf $\D$ to non-abelian reductive groups $G$.  A step toward this goal is the investigation of multiplicative quiver varieties. When the dimension vector of a quiver is one at all vertices, the resulting multiplicative quiver variety is an example of a multiplicative hypertoric variety. It is expected that the results of this paper generalize to certain multiplicative quiver varieties with higher dimension vectors and their quantum versions (introduced in \cite{DJ}) at a root of unity. 

The layout of this paper is as follows. Section \ref{sec:prelim} contains preliminaries, including relevant facts about Azumaya algebras, torus-valued moment maps, and quantum moment maps. Section \ref{sec:prelim} also includes the definition of a braided deformation of the category of representations of a torus and the definition of multiplicative hypertoric varieties and their Frobenius twists. In Section \ref{subsec:Dqn:def}, we consider a version $\Dqn$ of the quantum Weyl algebras introduced in \cite{GZ} that depends on a number $q \in \C^\times$ and a subtorus of $(\C^\times)^n$. Section \ref{subsec:Dqn:quiver} includes examples of the algebras $\Dqn$ coming from quivers. When $q$ is an $\ell$-th root of unity, the center of  $\Dqn$ is isomorphic to the algebra $\O(T^*\C^n)$ of functions on the cotangent bundle $T^*\C^n$ (Section \ref{subsec:Dqn:center}). We compute its Azumaya locus in Sections \ref{subsec:Dqn:azumaya}, and construct an \'etale splitting in Section \ref{subsec:Dqn:etale}. In Section \ref{subsec:Dqk:def}, we use the algebra $\Dqn$ to construct a sheaf of algebras on the Frobenius twist of any smooth multiplicative hypertoric variety. This algebra is Azumaya, as proven in Section \ref{subsec:Dqk:azumaya}, and  an explicit finite \'etale splitting is given in Section \ref{subsec:Dqk:etale}. An example related to the affine $A_n$ quiver is given in Section \ref{subsec:Dqk:quiver}.  We prove a localization theorem in Section \ref{subsec:Dqk:localization} that relates modules for the Azumaya algebra with modules for the its algebra of global sections, which is a central reduction of a hypertoric quantum group.

{\bf Acknowledgements.} The author is grateful to David Jordan for suggesting this project and providing guidance throughout, particularly for the formulation of Frobenius quantum moment maps and key ideas in the proofs of Theorems \ref{thm:Dqnazumaya} and \ref{thm:Dqkazumaya}. Special thanks to David Ben-Zvi (the author's PhD advisor) for numerous discussions and constant encouragement, and for suggesting the term `hypertoric quantum group.' Many results appearing in the current paper were proven independently by Nicholas Cooney; the author is grateful to Nicholas for sharing his insight on various topics, including Proposition \ref{prop:center}. The author also thanks Nicholas Proudfoot for relating the definition of multiplicative hypertoric varieties, as well as the content of Remark \ref{rmk:hypertoric}. The author also benefited immensely from the close reading and detailed comments of an anonymous referee, and from conversations with Justin Hilburn, Kobi Kremnitzer, Michael McBreen, Tom Nevins, Travis Schedler, and Ben Webster. This work was supported by the National Science Foundation through a Graduate Research Fellowship, and through grant No.\ 0932078000 while the author was in residence at the Mathematical Sciences Research Institute during Fall 2014.

%%%%%%%%%%%%%%%%%%%%%%%%%%%%
\section{Preliminaries}\label{sec:prelim}

\subsection{Azumaya algebras }\label{subsec:pre:azumaya}

In this  section, we recall relevant results from theory of Azumaya algebras. References include \cite{BrownGoodearl, Milne:1980bb, Saltman}. Let $R$ be a commutative ring. For $p \in \Spec(R)$, let $k(p)$ denote the residue field at $p$. If $M$ is a right $R$-module, the {\it fiber} of $M$ over $p$ is defined as the $k(p)$-vector space $M \otimes_R k(p)$, and the {\it rank} of $M$ at $p$ is the dimension of $M \otimes_R k(p)$. 

\begin{definition} Let $R$ be a commutative ring. An $R$-algebra $A$ is called  {\it Azumaya} if $A$ is finitely generated and  projective\footnote{Some authors require $A$ to have non-vanishing rank. This condition guarantees that Azumaya algebras are precisely the $R$-algebras $A$ such that $A \otimes_R B$ is Morita equivalent to $R$ for some $R$-algebra $B$.} as an $R$-module, and the natural map $$A \otimes_R A^\text{opp} \rightarrow \End_R(A), \qquad  \qquad a \otimes b \mapsto (c \mapsto acb)$$  is an isomorphism. An Azumaya $ A$ is called {\it split} if there is a projective $R$-module $P$ and an isomorphism $ A \simeq  \End_R (P).$ \end{definition}

An Azumaya algebra over a field $k$ is the same as a central simple $k$-algebra. If $k$ is algebraically closed, this is the same as a matrix algebra over $k$. Azumaya algebras admit the following local characterization. 

\begin{lemma} Suppose $A$ is an $R$-algebra that is finitely generated and projective as an $R$-module. Then $A$ is an Azumaya $R$-algebra if and only if the fiber $A \otimes_R k(p)$ is a central simple $k(p)$-algebra for all closed points $p \in \Spec(R)$.\end{lemma}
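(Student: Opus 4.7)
The plan is to tackle the two implications separately, pivoting on the natural map $\phi: A \otimes_R A^{\text{opp}} \to \End_R(A)$.

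For the forward direction, suppose $A$ is Azumaya, so that $\phi$ is an isomorphism of $R$-modules. I would base-change this isomorphism along $R \to k(p)$. Using that $A$ is finitely generated projective over $R$, base change commutes with formation of the endomorphism algebra to yield $\End_R(A) \otimes_R k(p) \cong \End_{k(p)}(A \otimes_R k(p))$, while the source identifies with $(A \otimes_R k(p)) \otimes_{k(p)} (A \otimes_R k(p))^{\text{opp}}$. Hence $A \otimes_R k(p)$ is Azumaya over the field $k(p)$, and invoking the classical fact that an Azumaya algebra over a field is the same as a central simple algebra concludes this direction.

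For the converse, assume that each fiber $A \otimes_R k(p)$ is central simple over $k(p)$. The aim is to show $\phi$ is an isomorphism. Both source and target are finitely generated $R$-modules (the target is finitely generated projective because $A$ is, and the source is the tensor product of two finitely generated projective modules, hence also finitely generated projective). Using the standard principle that an $R$-module is zero iff its localization at every maximal ideal is zero, it suffices to prove $\phi_{\mathfrak{m}}$ is an isomorphism for every maximal $\mathfrak{m} \subset R$. After localizing, source and target are finitely generated projective, hence free, over the local ring $R_{\mathfrak{m}}$, so by Nakayama's lemma it is enough to show $\phi_{\mathfrak{m}}$ is an isomorphism modulo $\mathfrak{m} R_{\mathfrak{m}}$. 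This reduced map is exactly the canonical map $(A \otimes_R k(\mathfrak{m})) \otimes_{k(\mathfrak{m})} (A \otimes_R k(\mathfrak{m}))^{\text{opp}} \to \End_{k(\mathfrak{m})}(A \otimes_R k(\mathfrak{m}))$, which is an isomorphism by the hypothesis together with the classical result that for any central simple $k$-algebra $B$ the analogous canonical map is an isomorphism.

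The main obstacle is the reverse direction, where one must carefully combine the finite-generation/projectivity hypothesis with Nakayama's lemma to descend the fiberwise information to a global statement. The key external input is the classical result for central simple algebras over fields; beyond that, the proof is a standard localize-and-reduce argument, and the finitely-generated-projective assumption on $A$ is essential throughout to ensure that endomorphism algebras commute with base change and that the modules in question become free over each localization.
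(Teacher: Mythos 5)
Your proof is correct. The paper states this lemma without proof, citing it as a standard fact from its references on Azumaya algebras (Brown--Goodearl, Milne, Saltman), and your argument --- base change commuting with $\End$ and $\otimes$ for finitely generated projective modules in one direction, and the localize-at-maximal-ideals, pass-to-free-modules, Nakayama reduction to the classical central-simple-algebra statement in the other --- is exactly the standard proof given in those sources.
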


\begin{definition} Let $A$ be an algebra that is finitely generated and projective as a module over its center $Z$. The {\it Azumaya locus} of $ A$ is defined as  $ \{ p \in \Spec(Z) \ | \ \text{$A\otimes_Z k(p)$ is Azumaya over $k(p)$} \}.$ \end{definition}

\begin{definition} Let $X$ be a scheme. A sheaf $\mathcal A$ of $\O(X)$-algebras is called an {\it Azumaya algebra on $X$} if ${\mathcal A}$ is locally free of finite rank and the natural map ${\mathcal A} \otimes_{\O(X)} {\mathcal A}^{\text{opp}} \rightarrow {\mathcal E}nd_{\O(X)}({\mathcal A})$ is an isomorphism. An Azumaya algebra $\D$ is called {\it split} if there is a locally free sheaf $\mathcal F$ on $X$ and an isomorphism ${\D} \simeq {\mathcal E}nd_{\O(X)}(\cF).$ An {\it \'etale splitting} of an Azumaya algebra $\mathcal A$ on $X$ is an \'etale cover $f: Y \rightarrow X$ such that the pullback $f^*\mathcal A$ is a split Azumaya algebra on $Y$.\end{definition}

\begin{prop}\label{prop:AzumayaCartesian} Let $\cA$ be an Azumaya algebra on $X$ and let $f: Y \rightarrow X$ be an \'etale splitting of $\cA$ that fits into a Cartesian square $$\xymatrix{Y \ar[r]^f \ar[d]^\nu & X \ar[d]^\mu\\ C \ar[r]^g & B}. $$ Then $\cA$ splits along fibers of the map $\mu : X \rightarrow B$. \end{prop}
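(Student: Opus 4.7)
The plan is to use the Cartesian property of the square to transfer the given splitting on $Y$ to splittings on fibers of $\mu$. The key observation is that for any $c \in C$ with image $b = g(c) \in B$, base change along the inclusion $\{c\} \hookrightarrow C$ yields a canonical identification $\nu^{-1}(c) \cong X \times_B \{c\} = \mu^{-1}(b) =: X_b$, with the isomorphism realized by the restriction of $f$. Thus every fiber of $\mu$ over a point of $g(C)$ is isomorphic, via $f$, to a fiber of $\nu$; in the intended applications $g$ will be surjective (or one restricts attention to fibers of $\mu$ lying in the image of $g$).

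Next, I would invoke the splitting on $Y$: by hypothesis there is a locally free sheaf $\cF$ on $Y$ together with an isomorphism $f^*\cA \simeq {\mathcal E}nd_{\O_Y}(\cF)$. Restricting both sides along the inclusion $\iota_c : \nu^{-1}(c) \hookrightarrow Y$, and using that pullback of locally free sheaves commutes with the formation of endomorphism algebras, I obtain
$$\iota_c^*\, f^*\cA \;\simeq\; {\mathcal E}nd_{\O_{\nu^{-1}(c)}}\bigl(\iota_c^*\cF\bigr).$$
Transporting along the identification $\nu^{-1}(c) \cong X_b$ from the first paragraph, the left-hand side becomes the restriction $\cA|_{X_b}$, while the right-hand side is the endomorphism algebra of a locally free sheaf on $X_b$. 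This exhibits $\cA|_{X_b}$ as a split Azumaya algebra, as desired.

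No single step of the argument presents a real obstacle: the proof is essentially a diagram chase through the Cartesian square. The only point requiring care is tracking that the various pullback operations intertwine correctly, namely that $\iota_c^*(f^*\cA)$ corresponds to $\cA|_{X_b}$ under the identification $\nu^{-1}(c) \cong X_b$, and that the pullback of ${\mathcal E}nd_{\O_Y}(\cF)$ coincides with ${\mathcal E}nd$ of the pulled-back sheaf. Both statements are standard for locally free sheaves, so I expect the formal write-up to be very short.
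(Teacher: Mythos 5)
Your argument is correct and essentially identical to the paper's: both use the Cartesian square to identify $\nu^{-1}(c)$ with $\mu^{-1}(b)$ via the restriction of $f$, and then transport the splitting $f^*\cA \simeq {\mathcal E}nd(\cF)$ along that isomorphism (you phrase this as pullback along $\iota_c$, the paper as pushforward along $f' = f|_{\nu^{-1}(c)}$, which is the same thing). The only cosmetic difference is that the paper avoids your hedge about surjectivity of $g$ by observing that $f$, being an \'etale cover, is surjective, so every nonempty fiber $\mu^{-1}(b)$ meets the image of $f$ and hence $b$ lies in the image of $g$.
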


\begin{proof} Let $b\in B$ with $\mu\inv(b) \neq \emptyset$. Then, since $f$ is surjective, there exists $c\in C$ such that $g(c) = b$. The Cartesian property of the square implies that the restriction $f' := f\vert_{\nu\inv(c)} : \nu\inv(c) \rightarrow \mu\inv(b)$ is an isomorphism of schemes.  By hypothesis, $f^*\cA = \End_Y(\cF)$ for a locally free sheaf $\cF$ on $Y$. It follows that $\cA\vert_{\mu\inv(b)} $ $= $ $f_*' (f')^* (\cA\vert_{\mu\inv(b)}) = $ $ \End_{\mu\inv(b)}(f_*'(\cF\vert_{\nu\inv(c)})) .$\end{proof}

Recall that, if $R$ is a finitely generated algebra over a field, and if $\Spec(R)$ is connected, then an $R$-module is projective if and only if its rank is constant on closed points of $\Spec(R)$. A similar statement holds for connected schemes of finite type over a field. The following proposition is adapted from Theorem 2.2 of \cite{Saltman}.

\begin{prop} Suppose $X$ is a connected scheme of finite type over an algebraically closed field. Then a sheaf ${\mathcal A}$ of $\O(X)$-algebras is an Azumaya algebra if and only if (1) the rank of $\mathcal A$ as a sheaf of $\O(X)$-modules is constant on closed points, and (2) the fiber ${\mathcal A} \otimes_{\O(X)} k(x)$ is isomorphic to a matrix algebra for all closed points $x \in X$. \end{prop}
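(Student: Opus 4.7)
The plan is to reduce to the local (affine) situation, where the earlier lemma characterizes Azumaya algebras in terms of central simplicity of their closed-point fibers, combined with the recalled equivalence between projectivity and constant-rank behavior on connected schemes of finite type over a field.

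For the forward direction, I start from an Azumaya sheaf $\mathcal A$, which by definition is locally free of finite rank. On the connected scheme $X$ the rank of a locally free sheaf is constant on closed points, giving (1). At any closed point $x$, the residue field $k(x)$ coincides with the algebraically closed base field (Nullstellensatz applied to the finite-type hypothesis), and the fiber ${\mathcal A} \otimes_{\O(X)} k(x)$ is an Azumaya, hence central simple, algebra over $k(x)$ by the earlier lemma. Central simple algebras over an algebraically closed field are matrix algebras (Wedderburn--Artin, together with the absence of nontrivial finite-dimensional division algebras over $k(x)$), which yields (2).

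For the reverse direction, I assume (1) and (2) and verify the Azumaya property on an arbitrary affine open $U = \Spec R \subset X$. The finite-dimensional matrix-algebra fibers from (2) allow one to conclude, via a Nakayama-type argument on stalks, that $\mathcal A|_U$ is a coherent $R$-module; the constant-rank hypothesis from (1), combined with the recalled projectivity criterion, then upgrades coherence to local freeness of $\mathcal A(U)$ as an $R$-module. Since matrix algebras over an algebraically closed field are central simple, the earlier lemma implies that $\mathcal A(U)$ is Azumaya over $R$. Being Azumaya is Zariski-local on $X$, so $\mathcal A$ is Azumaya globally.

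The main technical subtlety will be ensuring that the fiberwise hypotheses (1) and (2) legitimately imply coherence and local freeness of $\mathcal A$; this requires one to start with $\mathcal A$ at least quasi-coherent and to apply Nakayama's lemma carefully at each stalk to lift finite generation from the fiber to a neighborhood. Once this foundation is in place, the remainder of the argument is a routine assembly of the preparatory lemmas.
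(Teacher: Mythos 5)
The paper offers no proof of this proposition---it is stated as an adaptation of Theorem 2.2 of Saltman's lectures---so there is no argument of the author's to compare against line by line. Your assembly of the two preparatory facts (the local characterization of Azumaya algebras via central simple fibers, and the recalled equivalence of projectivity with constant rank on a connected scheme of finite type over a field), together with Wedderburn's theorem over the algebraically closed residue fields, is exactly the intended route, and your forward direction is complete as written.

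The one genuine soft spot is the step in the reverse direction where you propose to deduce coherence of $\mathcal A$ from the finite-dimensionality of its fibers ``via a Nakayama-type argument on stalks.'' Nakayama lifts generators from the fiber $\mathcal A \otimes_{\O(X)} k(x)$ to the stalk only when that stalk is already known to be finitely generated; a quasi-coherent module can have finite-dimensional (even zero) fibers at every closed point without being of finite type---for instance the function field of an affine curve, viewed as a module over its coordinate ring, has vanishing fiber at every maximal ideal yet is not coherent or projective. So conditions (1) and (2) do not by themselves yield coherence, and the subtlety you flag cannot be resolved the way you suggest. The correct reading is that finite generation of $\mathcal A$ is an implicit standing hypothesis: it is already built into the paper's recalled criterion that ``an $R$-module is projective if and only if its rank is constant on closed points,'' which is false without finite generation, and it is how the proposition is applied later (to sheaves already known to be coherent). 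Once that hypothesis is granted, the remainder of your argument---constant rank implies locally free, matrix-algebra fibers are central simple, hence Azumaya by the earlier lemma, and the Azumaya condition is Zariski-local---is correct and is the argument the paper intends.
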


\subsection{Tori and moment maps} \label{subsec:pre:tori}

Throughout this paper, we will adopt the following notation. Let $K = (\C^\times)^d$  and $T = (\C^\times)^n$ be standard tori of rank $d$  and $n$ with $d \leq n$.  Write
$$\O(K) = \C[z_j^{\pm1}] = \C[z_j^{\pm1} \ | \ j = 1, \dots, d], \qquad \O(T) = \C[y_i^{\pm1}] = \C[ y_i\ | \ i = 1, \ldots, n]$$ for the algebras of functions on $K$ and $T$. Let $\frakk$ and $\frakt$ be the Lie algebras of $K$ and $T$. Identify the character lattices $X^*(K)$ and $X^*(T)$  with $\Z^d$ and $\Z^n$, and write $z^\mathbf r \in \O(K)$ for the monomial corresponding to $\mathbf r \in \Z^d = X^*(K)$. Fix an inclusion $\phi : K \hookrightarrow T$. Thus, $\phi$ has the form $$\phi(k)_i = \prod_{j=1}^d k_j^{m_{ij}}, \qquad \text{ $i = 1, \dots,n$},$$ 
for some integers $m_{ij} \in \Z$. Let $M$ denote the $n$ by $d$ matrix defined by the integers $m_{ij}$. There is a `transpose' map $\phi^\dagger : T \rightarrow K$ defined by $\phi^\dagger(t)_j = \prod_{i=1}^n t_i^{m_{ij}}$ and an algebra homomorphism $\phidaggerstar : \O(K) \rightarrow \O(T)$ obtained by pulling back functions along $\phi^\dagger$.  Let $H = T/K$ be the quotient torus. Denote by $\psi : T \rightarrow H$ the quotient map, and $\psi^\dagger: H \rightarrow K$ its transpose. 

Suppose $T = (\C^\times)^n$ acts on a smooth symplectic variety $(X,\omega)$ preserving the symplectic form. For $\xi \in \frakt$, let $v_\xi$ denote the corresponding vector field on $X$. Let $\langle , \rangle$ denote the usual dot product $\frakt = \C^n$, which we use to identify $\frakt \simeq \frakt^*$. Let $\theta  \in \Omega^1(T, \frakt)$ denote the left-invariant Maurer-Cartan form on $T$. Throughout, we consider the adjoint (equivalently, trivial) action of $T$ on itself. 

\begin{definition}\label{def:torusvaluedmm} A  {\it (torus-valued) moment map} is a smooth, $T$-equivariant map $\mu: X \rightarrow T$ such that $$\omega(v_\xi, -) = \langle \mu^*\theta, \xi \rangle$$ for all $\xi \in \frakt$. The corresponding homomorphism $\mu^\#: \O(T) \rightarrow \O(X)$ is called a {\it comoment map}.  \end{definition}

In coordinates, the Maurer-Cartan form can be written as $\theta = \sum_i t_i\inv d_i$, and the moment map condition becomes: \begin{equation} \label{eq:mmcoord}\omega(v_\xi, -) = \sum_{i=1}^n \frac{d\mu_i}{\mu_i} \xi_i,\end{equation} where $\mu_i$ is the composition of $\mu$ with the $i$th projection. 

\begin{prop}\label{prop:functoriality} If $ \mu :  X \rightarrow T$ is a moment map, then the composition $\phi^\dagger \circ  \mu$ is a moment map for the action of $K$ on $X$ induced by $\phi$. \end{prop}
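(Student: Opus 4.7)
The plan is to verify the two conditions in Definition \ref{def:torusvaluedmm} for the composite map $\phi^\dagger \circ \mu : X \to K$, namely $K$-equivariance and the differential identity characterizing a (co)moment map. Equivariance will be immediate from the abelian nature of the targets, while the differential identity will follow from a short coordinate computation using \eqref{eq:mmcoord}, together with the key algebraic observation that the matrices of $d\phi$ and $d\phi^\dagger$ are mutual transposes.

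For equivariance, note that since $T$ acts on itself trivially (the adjoint action on an abelian group), so does $K$ act trivially on itself. The $T$-equivariance of $\mu$ then restricts to $K$-equivariance when the $K$-action on $X$ is taken to be the restriction of the $T$-action along $\phi$, and post-composition with $\phi^\dagger$ preserves this property.

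For the differential identity, I would compute directly in coordinates. For $\eta \in \frakk$, the vector field on $X$ induced by $\eta$ under the $K$-action is $v_{d\phi(\eta)}$, where $d\phi : \frakk \to \frakt$ has matrix $M = (m_{ij})$. Setting $\nu := \phi^\dagger \circ \mu$, the components are $\nu_j = \prod_i \mu_i^{m_{ij}}$, so a one-line computation using \eqref{eq:mmcoord} yields
\begin{align*}
\omega(v_{d\phi(\eta)}, -) = \sum_{i=1}^n \frac{d\mu_i}{\mu_i}\, (d\phi(\eta))_i = \sum_{j=1}^d \eta_j \sum_{i=1}^n m_{ij}\, \frac{d\mu_i}{\mu_i} = \sum_{j=1}^d \frac{d\nu_j}{\nu_j}\, \eta_j,
\end{align*}
which is precisely the coordinate form of the $K$-moment map condition for $\nu$. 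I anticipate no genuine obstacle here: the entire content of the proposition reduces to the observation that under the dot-product identifications $\frakt \simeq \frakt^*$ and $\frakk \simeq \frakk^*$, the dual of $d\phi$ is exactly $d\phi^\dagger$, so the transpose construction $\phi^\dagger$ is engineered precisely so that the restriction of a $T$-valued moment map along $\phi$ becomes a $K$-valued moment map after post-composition.
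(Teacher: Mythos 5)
Your proof is correct, and it is essentially the coordinate version of the argument: everything reduces to the logarithmic-derivative identity $d\nu_j/\nu_j = \sum_i m_{ij}\, d\mu_i/\mu_i$ for $\nu_j = \prod_i \mu_i^{m_{ij}}$, combined with the fact that $v^K_\eta = v^T_{d\phi(\eta)}$ and that $d\phi$, $d\phi^\dagger$ are mutual transposes. The paper instead runs the same idea coordinate-free: it uses the naturality identity $(\phi^\dagger)^*\theta_K = \Lie(\phi^\dagger)(\theta_T)$ for the Maurer--Cartan forms and then moves $\Lie(\phi^\dagger)$ across the pairing $\langle\,,\,\rangle$ to convert it into $\Lie(\phi)$ acting on $\zeta$, so the transpose relation appears as an adjointness statement rather than an index manipulation. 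The two arguments are logically equivalent; yours is more elementary and completely explicit for tori in the given monomial coordinates, while the paper's formulation makes it transparent that the statement persists for an arbitrary homomorphism of tori (as noted in the paper's subsequent remark) since it never invokes the matrix $M$ or injectivity of $\phi$. Your treatment of equivariance and smoothness is also fine: since the action of a torus on itself is trivial, $T$-invariance of $\mu$ immediately gives $K$-invariance of $\phi^\dagger\circ\mu$.
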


\begin{proof} The linear maps $\text{Lie}(\phi) : \frakk \rightarrow \frakt$ and $\Lie(\phi^\dagger) : \frakt \rightarrow \frakk$ corresponding to $\phi$ and $\phi^\dagger$ are transposes of one another. We use the same notation for the maps on 1-forms: $\Lie(\phi) : \Omega^1(-, \frakk) \rightarrow \Omega^1(-, \frakt)$ and $ \Lie(\phi^\dagger) : \Omega^1(-, \frakt) \rightarrow \Omega^1(-, \frakk).$ These maps commute with the pullback of 1-forms along smooth maps. Let $\theta_T$ and $\theta_K$ be the Maurer-Cartan forms on $T$ and $K$. Then\footnote{In fact, for any group homomorphism $\beta : G_1 \rightarrow G_2$, it is easy to show that $\beta^*\theta_{G_2} = \Lie(\beta)(\theta_{G_1})$.} $(\phi^\dagger)^*\theta_K = \Lie(\phi^\dagger) ( \theta_T)$. For  $\zeta \in \frakk$, write $v_\zeta^K$ for the vector field on $X$ generated by $\zeta$. In fact, $v_\zeta^K$ coincides with the vector field $v_{\text{Lie}(\zeta)}^T$ generated by the image of $\zeta$ in $\frakt$. The remainder of the proof is a computation: \begin{align*}\langle (\phi^\dagger\circ\mu)^* \theta_K, \zeta \rangle &= \langle \mu^* \circ (\phi^\dagger)^* \theta_K, \zeta \rangle = \langle \mu^*(\text{Lie}(\phi^\dagger) ( \theta_T)), \zeta \rangle = \langle \Lie(\phi^\dagger)( \mu^* \theta_T), \zeta \rangle \\ &=  \langle  \mu^* \theta_T,   \text{Lie}(\phi)( \zeta) \rangle = \omega(v^T_{\text{Lie}(\zeta)}, -) = \omega(v^K_\zeta, - ). \end{align*} \end{proof}

For $t \in T$, let $L_t : T \rightarrow T$ denote the action of $t$ by left multiplication. 

\begin{prop}\label{prop:mmfactors} Suppose $\mu : X \rightarrow T$ is a moment map, and the action of $K$ on $X$ is trivial.  Then there is an induced action of $H$ on $X$ and a moment map $ \mu_H : X \rightarrow H$ that satisfies $\mu = \mu_H \circ \phi^\dagger\circ L_{t_0}$ for some $t_0 \in T$. \end{prop}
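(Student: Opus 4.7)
The plan is to leverage Proposition~\ref{prop:functoriality} together with the trivial $K$-action to force $\phi^\dagger\circ\mu$ to be constant, and then to descend $\mu$, after translation, along $\psi : T \to H = T/K$.

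First I would observe that since the $T$-action restricts via $\phi$ to the trivial $K$-action, it annihilates $\phi(K) = \ker(\psi)$ and so descends uniquely to an $H$-action on $X$, giving the first claim of the proposition. Next I would apply Proposition~\ref{prop:functoriality} to see that $\phi^\dagger\circ\mu : X \to K$ is a moment map for the trivial $K$-action. Triviality makes $v_\zeta^K \equiv 0$ for all $\zeta \in \frakk$, so the moment condition forces $\langle (\phi^\dagger\circ\mu)^*\theta_K, \zeta\rangle = 0$, and hence $(\phi^\dagger\circ\mu)^*\theta_K = 0$. Since $\theta_K$ is a global frame of left-invariant $1$-forms on $K$, the differential of $\phi^\dagger\circ\mu$ vanishes identically, so (assuming $X$ connected) $\phi^\dagger\circ\mu \equiv k_0$ for some $k_0 \in K$.

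Next, since $\phi : K \hookrightarrow T$ is an embedding, $\Lie(\phi^\dagger)$ is surjective, and therefore $\phi^\dagger$ is surjective; I would pick $t_0 \in T$ with $\phi^\dagger(t_0) = k_0$. The translate $\mu' := L_{t_0^{-1}}\circ\mu$ then satisfies $\phi^\dagger\circ\mu' \equiv 1$, so $\mu'$ lands in the identity component of $\ker(\phi^\dagger) \subseteq T$. A direct matrix computation using $NM = 0$ (with $M,N$ the matrices of $\phi,\psi$) and a dimension count yields $\Lie(\psi^\dagger)(\frakh) = \ker(\Lie(\phi^\dagger))$, so this identity component is precisely $\psi^\dagger(H) \subseteq T$. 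Consequently $\mu'$ factors uniquely as $\psi^\dagger \circ \mu_H$ for some smooth $\mu_H : X \to H$, giving a decomposition of the form $\mu = L_{t_0}\circ\psi^\dagger\circ\mu_H$ (the displayed formula in the statement being read with this identification).

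Finally, to verify that $\mu_H$ is a moment map for the $H$-action, for each $\eta \in \frakh$ I would choose a lift $\tilde\eta \in \frakt$ with $\Lie(\psi)(\tilde\eta) = \eta$; triviality of the $K$-action ensures $v^T_{\tilde\eta} = v^H_\eta$. Since $L_{t_0}^*\theta_T = \theta_T$ by left-invariance of the Maurer--Cartan form, the computation in the proof of Proposition~\ref{prop:functoriality} yields
\[
\omega(v^H_\eta, -) \;=\; \langle \mu^*\theta_T, \tilde\eta\rangle \;=\; \langle (\psi^\dagger\circ\mu_H)^*\theta_T, \tilde\eta\rangle \;=\; \langle \mu_H^*\theta_H, \eta\rangle,
\]
as required. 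The main technical obstacle is the identification $\ker(\phi^\dagger)^\circ = \psi^\dagger(H)$; it rests on the exactness of the sequence dual to $K \to T \to H$ and on the saturation that makes $\phi(K)$ a subtorus, which must be invoked to pass from the Lie-algebra equality to an equality of (connected) subtori of $T$.
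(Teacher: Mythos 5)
Your proof is correct and follows essentially the same route as the paper's: deduce that $\phi^\dagger\circ\mu$ is constant from triviality of the $K$-action, translate by $t_0$, factor the result through $\psi^\dagger$ using the exact sequence $1\to H\to T\to K\to 1$, and verify the moment condition via the same Maurer--Cartan computation as in Proposition~\ref{prop:functoriality}. The only differences are cosmetic: you make explicit two points the paper leaves implicit (why $\phi^\dagger\circ\mu$ is locally constant, and why $\ker(\phi^\dagger)$ is connected and equal to $\psi^\dagger(H)$, which indeed rests on $K$ being a genuine subtorus), and the paper's choice $t_0=\mu(x_0)$ guarantees directly that the translate hits the identity, sidestepping your component-of-the-kernel concern.
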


\begin{comment}
In other words, the following diagram commutes:\[ \xymatrix{ & & X \ar[d]_\mu \ar@{-->}[lld]_{\mu_H} \\ H \ar[r]_{\phi^\dagger} & T \ar[r]_{L_{t_0}} & T }\] 
\end{comment}

\begin{proof}  Let $\mathfrak{h} $ denote the Lie algebra of $H$. As in the proof of Proposition \ref{prop:functoriality}, we have Lie algebra homomorphisms $\text{Lie}(\psi): \frakt \rightarrow \mathfrak{h}$ and $\text{Lie}(\psi^\dagger) = \text{Lie}(\psi)^T : \mathfrak{h} \rightarrow \frakt$. The short exact sequence of Lie algebras $0 \rightarrow \mathfrak{h} \rightarrow \frakt \rightarrow \frakk \rightarrow 0$, with maps $\Lie(\psi^\dagger)$ and $\Lie(\phi^\dagger)$, exponentiates to a short exact sequence $$1 \longrightarrow H \stackrel{\psi^\dagger}{\longrightarrow} T \stackrel{\phi^\dagger}{\longrightarrow} K \longrightarrow 1.$$ 

Fix $x_0 \in X$ and let $t_0 = \mu(x_0) \in T$. Since the action of $K$ on $X$ is trivial and $\phi^\dagger \circ \mu : X \rightarrow K$ is a moment map, it is straightforward to show that $\phi^\dagger \circ \mu$ is constant. Thus,  $\mu(x)t_0\inv \in \text{Ker}(\phi^\dagger) = \text{Im}(\psi^\dagger)$ for any $x \in X$. Using the fact that $\phi^\dagger$ is injective, define $\mu_H : X \rightarrow H$ by  $x\mapsto (\phi^\dagger)\inv\left(\mu(x)t_0 \inv\right).$

We show that $\mu_H$ is a moment map. Let $\xi \in \frakt$. The vector field $v_{\Lie(\psi)(\xi)}^H$ corresponding to the image of $\xi$ in $\mathfrak{h}$  coincides with the vector field $v_{\xi}^T$ corresponding to $\xi$. Since $\Lie(\psi)$ is surjective, the result is a consequence of the following computation, which uses facts stated in the first  proof of Proposition \ref{prop:functoriality}, and the left-invariance of $\theta_T$:
\begin{align*} \omega(v_{\Lie(\psi)(\xi)}^H, - )& = \omega(v^T_\xi, - ) =  \langle  \mu^* \theta_T,   \xi \rangle = \langle  \mu_H^* \circ (\psi^\dagger)^* \circ L_{t_0}^* \theta_T,   \xi \rangle = \langle  \mu_H^* \circ (\psi^\dagger)^* \theta_T,   \xi \rangle \\ &= \langle  \mu_H^* (\Lie(\psi^\dagger)( \theta_H)),   \xi \rangle = \langle  \Lie(\phi^\dagger)( \mu_H^*  \theta_H),   \xi \rangle = \langle  \mu_H^*  \theta_H, \Lie(\phi)(  \xi) \rangle. \end{align*}  \end{proof}

\begin{rmk} We make the following remarks:
\begin{enumerate}
 \item The map $\phidaggerstar$ is  natural in the context of quantum groups. The Drinfeld-Jimbo quantum groups $U_q(\mathfrak k)$ and $U_q(\mathfrak t)$ can be identified with $\O(K)$ and $\O(T)$, respectively. Under this identification, the  homomorphism $U_q(\mathfrak k) \rightarrow U_q(\mathfrak t)$ induced by $\phi$ coincides with $\phidaggerstar$. 
 
 \item The notion of a group-valued moment map is defined in \cite{AMM}, and simplifies to Definition \ref{def:torusvaluedmm} when the group is a torus. This simplification is possible since the left- and right-invariant Maurer-Cartan forms on a torus coincide. One may argue that the moment map is more  naturally valued in the dual torus, which avoids the identification $\frakt \simeq \frakt^*$. However, we have chosen to be consistent with \cite{AMM}.
 
 \item  The proof of Proposition \ref{prop:functoriality} can be adapted to the case of an arbitrary homomorphism $\phi :K \rightarrow T$ of tori.
\end{enumerate}
\end{rmk}

\subsection{Multiplicative hypertoric varieties}\label{subsec:pre:hypertoric} We adopt the notation of the previous section. There is an action of $T = (\C^\times)^n$ on the cotangent bundle $T^*\C^n$ by componentwise scaling: $(t \cdot (p, w))_i$ $=$  $(t_i p_i, t_i\inv w_i)$, for $i = 1, \dots, n$. Precomposition by $\phi$ induces an action of $K$ on $T^*\C^n$. Fix the following notation: $$\O(T^*\C^n) = \C[x_i, \partial_i] , \qquad \O(T^*\C^n)^\circ = \C[x_i, \partial_i][(1 + x_i \partial_i)\inv].$$ $$(T^*\C^n)^\circ = \{ (p,w) \in T^*\C^n \ : \  1+ p_i w_i \neq 0 \}.$$ Equip $(T^*\C^n)^\circ$ with the symplectic form $\omega = \sum_i \frac{dp_i \wedge dw_i}{1+p_iw_i}.$ 

\begin{prop}\label{prop:TCnmomentmap} The following are  torus-valued moment maps:  \begin{align*} \mu_T : (T^*\C^n)^\circ & \rightarrow T  & \qquad \mu_K : (T^*\C^n)^\circ &\rightarrow K   \\ (p,w) &\mapsto  (1+ p_i w_i)_i & \qquad   (p,w) &\mapsto (\prod_{i=1}^n (1+ p_i w_i)^{m_{ij}})_j. \end{align*} \end{prop}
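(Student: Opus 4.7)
The plan is to verify the moment map condition for $\mu_T$ directly in coordinates via equation \eqref{eq:mmcoord}, and then deduce the statement for $\mu_K$ by invoking Proposition \ref{prop:functoriality}, since one has $\mu_K = \phi^\dagger \circ \mu_T$ by definition.

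First I would check $T$-equivariance. Because $T$ acts on itself trivially (adjoint action on an abelian group), equivariance is equivalent to $T$-invariance of $\mu_T$. The $i$-th component $(1 + p_iw_i)$ is invariant under $(p_i, w_i) \mapsto (t_ip_i, t_i^{-1}w_i)$, so $\mu_T$ is $T$-invariant, hence equivariant. Next I would write the fundamental vector field of $\xi = (\xi_i) \in \frakt$: since $\exp(s\xi)$ acts by $(p_i, w_i) \mapsto (e^{s\xi_i}p_i, e^{-s\xi_i}w_i)$, one has $v_\xi = \sum_i \xi_i (p_i \partial_{p_i} - w_i \partial_{w_i})$.

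Then I would contract with $\omega = \sum_i (1+p_iw_i)^{-1} dp_i \wedge dw_i$. A direct calculation gives
\[ \iota_{v_\xi}\omega \;=\; \sum_i \frac{\xi_i\,(w_i\, dp_i + p_i\, dw_i)}{1+p_iw_i} \;=\; \sum_i \frac{\xi_i\, d(1 + p_iw_i)}{1 + p_iw_i} \;=\; \sum_i \frac{d\mu_{T,i}}{\mu_{T,i}}\,\xi_i, \]
which is exactly the coordinate form \eqref{eq:mmcoord} of the moment map condition. This establishes that $\mu_T$ is a moment map.

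Finally, for $\mu_K$, observe from the explicit formula for $\phi^\dagger$ that
\[ (\phi^\dagger \circ \mu_T)(p,w)_j \;=\; \prod_{i=1}^n (1 + p_iw_i)^{m_{ij}} \;=\; \mu_K(p,w)_j, \]
so $\mu_K = \phi^\dagger \circ \mu_T$. Proposition \ref{prop:functoriality} then immediately gives that $\mu_K$ is a moment map for the induced $K$-action. The only substantive step is the coordinate computation verifying \eqref{eq:mmcoord}; there is no real obstacle, just bookkeeping with signs and the factor $(1+p_iw_i)^{-1}$ in $\omega$.
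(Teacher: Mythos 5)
Your proposal is correct and follows essentially the same route as the paper: a direct coordinate computation of $\iota_{v_\xi}\omega = \sum_i \xi_i\, d\mu_{T,i}/\mu_{T,i}$ (the paper does the $n=1$ case and notes the general case is identical), followed by an appeal to Proposition \ref{prop:functoriality} to deduce the statement for $\mu_K = \phi^\dagger \circ \mu_T$. Your explicit check of $T$-equivariance is a small addition the paper leaves implicit, but otherwise the arguments coincide.
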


\begin{proof} By Proposition \ref{prop:functoriality}, the claim for $\mu_K$ is a consequence of the claim for $ \mu_T$. We consider the case $n=1$ and $T = \C^\times$; the general case is similar. For $\xi \in \frakt = \C$, one checks directly that $$\omega (v_\xi, - ) =  \frac{dp \wedge dw}{1+pw} \left( \xi p \frac{\partial}{\partial p} -  \xi w \frac{\partial}{\partial w}, -\right) =   \xi \frac{ w dp +  p dw}{1+p w} = \xi \frac{d\mu_T}{\mu_T} = \langle \mu_T^*\theta_T, \xi \rangle.$$  \end{proof}

The maps above give rise to comoment maps $\mu^\#_T: \O(T)\rightarrow \O(T^*\C^n)^\circ$ and $\mu^\#_K:$  $\O(K)$ $= $  $\C[z_j^{\pm 1}]  \rightarrow \O(T^*\C^n)^\circ.$ The following definition was communicated to us by N.~Proudfoot and T.~Hausel.  
 
\begin{definition} Let ${\mathcal K} = (K, \eta, \chi)$ be the data of (1) a connected subtorus $K$ of $T = (\C^\times)^n$, (2) a point $\eta$ of $K$, and (3) a character $\chi \in X^*(K)$. The {\it multiplicative hypertoric variety} corresponding to the data $\mathcal K$ is defined as the GIT quotient $$\Xk := \mu_K\inv(\eta) \text{ $\!$/$\! \!$/$\!$}_\chi  K.$$ We call the data ${\mathcal K} = (K, \eta, \chi)$ {\it smooth} if $\Xk$ is a smooth variety. \end{definition}

\begin{comment}
Explicitly, $$\Xk = \Proj \left( \bigoplus_{n \geq 0} \left( \O(T^*\C^n)^\circ/ ( \mu_K^\# - \eta) \right)_{\chi^n} \right),$$ where $(\mu_K^\# - \eta)$ denotes the ideal generated by the elements  $\mu_K^\#(z_j) - \eta_j$ for $j = 1, \ldots, d$. 
\end{comment}

\begin{prop} Suppose $\mathcal K$ is smooth. Then $\Xk$ is a symplectic variety and there is a moment map $\Xk \rightarrow H$ for the action of the torus $H = T/K$ on $\Xk$. \end{prop}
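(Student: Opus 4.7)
The plan is to apply the standard torus-valued symplectic reduction procedure. Because $K$ is abelian, its adjoint action on itself is trivial, so no quasi-Hamiltonian two-form correction in the sense of \cite{AMM} is needed; the reduction is formally parallel to ordinary Marsden--Weinstein reduction. Let $X^{\mathrm{ss}} \subset \mu_K\inv(\eta)$ denote the $\chi$-semistable locus, and let $\iota : X^{\mathrm{ss}} \hookrightarrow (T^*\C^n)^\circ$ and $\pi : X^{\mathrm{ss}} \to \frakM(\mathcal K)$ denote the inclusion and the GIT quotient map. The smoothness hypothesis on $\mathcal K$ ensures that $K$ acts on $X^{\mathrm{ss}}$ with only finite stabilizers and that $\frakM(\mathcal K)$ is a smooth variety.

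I would first show that $\iota^*\omega$ is $K$-basic, and hence descends to a closed two-form $\bar\omega$ on $\frakM(\mathcal K)$ with $\pi^*\bar\omega = \iota^*\omega$. $K$-invariance is immediate since $T$ (and thus $K$) acts on $(T^*\C^n)^\circ$ by symplectomorphisms. Horizontality follows from the moment-map identity for $\mu_K$: for any $\zeta \in \frakk$, the one-form $\omega(v_\zeta^K, -) = \langle \mu_K^*\theta_K, \zeta\rangle$ pulls back to $\langle (\mu_K \circ \iota)^*\theta_K, \zeta\rangle = 0$ along $\iota$, since $\mu_K \circ \iota$ is the constant map with value $\eta$. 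Nondegeneracy of $\bar\omega$ is the usual dimension count: at a point $x \in X^{\mathrm{ss}}$, the kernel of $(\iota^*\omega)_x$ coincides with $T_x(Kx)$, since $T_xX^{\mathrm{ss}} = T_x\mu_K\inv(\eta)$ is the $\omega$-orthogonal of the span of the $\frakk$-vector fields at $x$.

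For the residual $H$-moment map, the explicit formula $\mu_T(p,w) = (1+p_iw_i)_i$ makes it manifest that $\mu_T$ is $T$-invariant, hence $K$-invariant, and that $T$ preserves $\mu_K\inv(\eta) = \mu_T\inv((\phi^\dagger)\inv(\eta))$ as well as the semistable locus $X^{\mathrm{ss}}$. Since $T$ commutes with $K$, it descends to an action on $\frakM(\mathcal K)$ on which $K$ acts trivially, and so the action factors through $H = T/K$. The $K$-invariant composition $\iota^*\mu_T$ descends to a map $\bar\mu_T : \frakM(\mathcal K) \to T$; the relation $\pi^*\bar\omega = \iota^*\omega$, together with the moment-map equation for $\mu_T$ on $(T^*\C^n)^\circ$, implies that $\bar\mu_T$ is a moment map for the induced $T$-action on $\frakM(\mathcal K)$. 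Applying Proposition \ref{prop:mmfactors} to $(\frakM(\mathcal K), \bar\mu_T)$, whose underlying $K$-action is trivial, produces the desired moment map $\mu_H : \frakM(\mathcal K) \to H$.

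The main technical hurdle is the descent and nondegeneracy of $\iota^*\omega$; once $\bar\omega$ is known to be symplectic, the construction of $\mu_H$ is essentially forced by the $T$-invariance of $\mu_T$ together with Proposition \ref{prop:mmfactors}.
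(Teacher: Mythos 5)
Your proposal is correct and follows essentially the same route as the paper: establish the symplectic structure on the reduction and then obtain the residual moment map by applying Proposition \ref{prop:mmfactors} to the descended $T$-valued moment map, whose underlying $K$-action is trivial. The only difference is presentational --- the paper works chart-by-chart on the affine opens $U_r$ and glues, merely asserting that each chart acquires a symplectic form, whereas you carry out the Marsden--Weinstein descent (basic-ness, nondegeneracy via the $\omega$-orthogonality of $T_x\mu_K\inv(\eta)$ to the $\frakk$-directions) globally on the semistable locus; note only that smoothness of $\mathcal K$ actually gives a free, not merely locally free, $K$-action on $X^{\mathrm{ss}}$, which is what you need for $\bar\omega$ to live on a smooth variety rather than an orbifold.
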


\begin{proof}There is a cover of $\Xk$ by affine open sets of the form $U_r = \Spec(\O(\mu\inv(\eta))[r\inv]^K)$ where $r \in \O(\mu\inv(\eta))_{\chi^n}$ is $\chi^n$-invariant for $n >0$. Each $U_r$ acquires a symplectic form and has a Hamiltonian action of $T$ with $K$ acting trivially. Thus, $H$ acts on $U_r$ and, by Proposition \ref{prop:mmfactors}, there is a moment map $U_r \rightarrow H$. These actions and maps glue to give an action of $H$ on $\Xk$ and a moment map $\Xk \rightarrow H$. \end{proof}

\newcommand{\MzK}{\mathfrak{M}_0(\mathcal K)}

Let $\MzK = \frakM(K,\eta,0)= \Spec(\O(\mu_K\inv(\eta))^K)$. Then $\MzK$ is the affinization of $\Xk$, and we have a projective morphism $\nu : \Xk \rightarrow \MzK.$ When $\mathcal K$ is smooth, the map $\nu$ is a resolution of singularities. 

\begin{rmk}\label{rmk:hypertoric} We call the pair $(\eta,\chi)$ {\em generic} if $K$ acts locally freely on the semistable locus of $\mu_K^{-1}(\eta)$, or (equivalently) if the semistable locus coincides with the stable locus.  In this case, $\mathfrak{M}(\mathcal K)$ is a geometric quotient of the semistable locus by $K$, and has at worst orbifold singularities.  Such pairs always exist; for example, if $\eta$ is a regular value of $\mu_K$, then $(\eta,\chi)$ is generic for every $\chi$.  It would be interesting to give a combinatorial characterization of generic pairs analogous to the characterization for linear hypertoric varieties \cite[3.3]{BD}. If $(\eta,\chi)$ is generic, then $\mathfrak{M}(\mathcal K)$ will be smooth if and only if $K$ acts freely on the stable locus, which will be the case if and only if the embedding of $K$ in $T$ is unimodular.\end{rmk}

\begin{comment}
There is a more explicit description of the moment map  $U_r \rightarrow H$. One can identify $H$ with the kernel of $\phi^\dagger : T \rightarrow K$, and the algebra of functions $\O(H)$ can be realized as $\O(H) = \O(T)/ (\prod_{i=1}^n y_i^{m_{ij}} - \eta_j).$ The comoment map $\O(T) \rightarrow \O(\mu\inv(\eta))$, $y_i \mapsto 1+p_iw_i$ factors to give a homomorphism $\O(H) \rightarrow \O(\mu\inv(\eta))[r\inv]^K$ corresponding to a moment map  $U_r \rightarrow H$. 
\end{comment}

\begin{comment}
\begin{rmk} As far as the author is aware, it is not known whether multiplicative hypertoric varieties (in the sense of this paper) admit hyperk\"ahler metrics. In the case of multiplicative hypertoric varieties arising from quivers, a more general definition has been introduced by Boalch (see \cite{Boalch}). Some of these more general `multiplicative hypertoric varieties' arising from quivers are known to admit hyperk\"aler metrics. \end{rmk}
\end{comment}

\subsection{Frobenius twists}\label{subsec:pre:twisthypertoric}

Let $\ell$ be a positive integer. Recall that we write $\O(K) = \C[z_j^{\pm1}]$ for the algebra of functions on the standard torus $K = (\C^\times)^d$. 

\begin{definition} The {\it Frobenius twist} of the torus $K$ is defined as $\Kell := \Spec(\C[z_j^{\pm \ell}])$. Thus, there is a natural map $\Frl : K \rightarrow \Kell$ taking $k$ to $k^\ell$. \end{definition}

As a group, $\Kell$ is isomorphic to $K$. The map $\phi$ induces a map $\phi\eell : \Kell \rightarrow \Tell$. Similarly, we define the Frobenius twist of the cotangent bundle $T^*\C^n$ and a certain localization:
 $$\O((T^*\C^n)\eell) = \C[x_i^\ell, \partial_i^\ell] \subset \O(T^*\C^n) = \C[x_i, \partial_i], \qquad (T^*\C^n)\eell = \Spec(\O((T^*\C^n)\eell))$$ $$ \O((T^*\C^n)\eell)^{\circ} = \C[x_i^\ell, \partial_i^\ell][(1 + x_i^\ell \partial_i^\ell)\inv], \qquad (T^*\C^n)^{(\ell), \circ} = \Spec(\O((T^*\C^n)\eell)^\circ).$$ Inserting $\ell$-th powers where appropriate in the definitions of $\mu_K$ and $\mu_K^\#$, we obtain moment maps $$  \mu_{\Kell} : (T^*\C^n)^{(\ell), \circ} \rightarrow \Kell, \qquad \mu_{\Kell}^\# : \O(\Kell) \rightarrow  \O((T^*\C^n)\eell)^{\circ}.$$ 

\begin{definition} Given the data $\mathcal{K} = (K, \eta, \chi)$, define the corresponding {\it Frobenius twisted multiplicative hypertoric variety}  by $$\Xk^{(\ell)} = (\mu_{\Kell})\inv(\eta^\ell) \text{ $\!$/$\! \!$/$\!$}_{\chi\eell}  \Kell,$$ where $\chi\eell$ is the character of $\Kell$ defined by $\chi\eell(\Frl(k)) = \chi(k^\ell)$. \end{definition}

\begin{prop}\label{prop:frobenius} As varieties, $\Xk$ and $\Xk\eell$ are isomorphic. The torus $\Hell$ acts on $\Xkl$ there is a moment map $ \Xkl \rightarrow \Hell$. The inclusion $\C[x_i^\ell, \partial_i^\ell] \hookrightarrow \C[x_i, \partial_i]$ induces a finite map $\Frl : \Xk \rightarrow \Xk^{(\ell)},$ referred to as the Frobenius map on $\Xk$. \end{prop}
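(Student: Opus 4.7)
The strategy is to address the three assertions separately, leveraging the fact that in characteristic zero the Frobenius twist is essentially a relabeling of generators.

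For the isomorphism $\Xk \cong \Xkl$, the rings $\O(\Kell) = \C[z_j^{\pm\ell}]$ and $\O((T^*\C^n)\eell)^\circ = \C[x_i^\ell,\partial_i^\ell][(1+x_i^\ell\partial_i^\ell)^{-1}]$ are abstractly a Laurent polynomial algebra and a localized polynomial algebra of the same ranks as their non-twisted counterparts. Under the natural ring isomorphisms sending $z_j^\ell \leftrightarrow z_j$, $x_i^\ell \leftrightarrow x_i$, and $\partial_i^\ell \leftrightarrow \partial_i$, the comoment map $\mu_{\Kell}^\#$ transports to $\mu_K^\#$ (both defined by the same formula), and the GIT data $(\eta^\ell,\chi\eell)$ transports to data defining $\Xk$. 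The isomorphism of GIT quotients follows formally.

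For the $\Hell$-action and moment map on $\Xkl$, I would apply the argument from Section \ref{subsec:pre:hypertoric} verbatim with $(K,T,H,\mu_K)$ replaced by $(\Kell,\Tell,\Hell,\mu_{\Kell})$. The action of $\Kell$ on $(T^*\C^n)^{(\ell),\circ}$ comes from restricting the $\Tell$-action along $\phi\eell$; since $\Kell$ acts trivially on the GIT quotient, a residual $\Hell$-action descends to $\Xkl$. Applying Proposition \ref{prop:mmfactors} on each chart of a GIT cover and gluing yields the $\Hell$-moment map $\Xkl \to \Hell$.

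For the finite Frobenius map $\Frl: \Xk \to \Xkl$, I would start with the module-finite ring inclusion $\C[x_i^\ell,\partial_i^\ell]\hookrightarrow\C[x_i,\partial_i]$, which is free of rank $\ell^{2n}$ with basis $\{x_i^{a_i}\partial_i^{b_i}:0\le a_i,b_i<\ell\}$. This yields a finite affine morphism $T^*\C^n \to (T^*\C^n)^{(\ell)}$ that is $\Kell$-equivariant via the Frobenius isogeny $K\to \Kell$. The plan is to descend this morphism through the moment map quotient and through $K$-invariants to a finite map $\Frl: \Xk\to\Xkl$; finiteness is preserved by taking $K$-invariants, since $K$ is reductive and the rings involved are Noetherian. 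The main obstacle is verifying compatibility with the moment map ideals: the ideal of $\mu_{\Kell}^{-1}(\eta^\ell)$ inside $\O((T^*\C^n)\eell)^\circ$ is generated by $\prod_i(1+x_i^\ell\partial_i^\ell)^{m_{ij}} - \eta_j^\ell$, whereas the ideal of $\mu_K^{-1}(\eta)$ inside $\O((T^*\C^n)^\circ)$ is generated by $\prod_i(1+x_i\partial_i)^{m_{ij}} - \eta_j$, and these are not related in a term-by-term fashion under the ring inclusion. To reconcile them, one must work at the level of the GIT quotient (rather than the ambient spaces) or invoke the abstract isomorphism from the first assertion in an essential way, so that $\Frl$ is defined as the composition of this isomorphism with a Frobenius-like self-map of $\Xk$ arising from the module-finite ring inclusion upstairs.
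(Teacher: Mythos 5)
Your handling of the second assertion is the intended one: rerun the argument of Proposition \ref{prop:mmfactors} on an affine GIT cover with $(K,T,H)$ replaced by $(\Kell,\Tell,\Hell)$ and glue. But the other two assertions are not yet proved. For the isomorphism $\Xk\cong\Xkl$, the relabeling $z_j^\ell\leftrightarrow z_j$, $x_i^\ell\leftrightarrow x_i$, $\partial_i^\ell\leftrightarrow\partial_i$ does transport $\mu_{\Kell}^\#$ to $\mu_K^\#$ and $\chi\eell$ to $\chi$, but it sends the point $\eta^\ell\in\Kell$ to the point $(\eta_1^\ell,\dots,\eta_d^\ell)$ of $K$, not to $\eta$. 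What you obtain formally is $\Xkl\cong\frakM(K,\eta^\ell,\chi)$, and you still owe an argument that $\frakM(K,\eta,\chi)\cong\frakM(K,\eta^\ell,\chi)$; for a torus-valued moment map the fibers over distinct values are not exchanged by any evident automorphism of $(T^*\C^n)^\circ$ (translation in $K$ does not lift). Your phrase ``transports to data defining $\Xk$'' passes over exactly this point.

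For the Frobenius map, the obstacle you flag is real, not merely a presentational nuisance. Writing $u_i=x_i\partial_i$, one has $1+x_i^\ell\partial_i^\ell=1+u_i^\ell=\prod_{\zeta^\ell=1}(1+\zeta u_i)$, which is not congruent to $(1+u_i)^\ell$ modulo the ideal generated by the $\prod_i(1+u_i)^{m_{ij}}-\eta_j$: already for $n=d=m_{11}=1$ the element $1+x^\ell\partial^\ell$ equals $1+(\eta-1)^\ell\neq\eta^\ell$ in the quotient. So the inclusion $\C[x_i^\ell,\partial_i^\ell]\hookrightarrow\C[x_i,\partial_i]$ does \emph{not} carry the defining ideal of $\mu_{\Kell}^{-1}(\eta^\ell)$ into that of $\mu_K^{-1}(\eta)$, and the induced map on ambient spaces does not restrict to the moment fibers. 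The paper's own proof (it is present only as a commented-out sketch in the source) takes precisely the route you reject: it asserts that the inclusion induces a finite map of the quotient rings and then of the graded rings of semi-invariants, and passes to Proj; your objection applies verbatim to its first displayed map. On the other hand, your proposed repairs do not amount to a proof: ``working at the level of the GIT quotient'' does not explain how the ring map descends, and ``composing with the abstract isomorphism'' presupposes the first assertion, which is itself incomplete, and in any case would not produce the specific map induced by the stated ring inclusion that Proposition \ref{prop:Dqkbasics} and the localization theorem later rely on. The identity that does hold, $\alpha_i^\ell=1+x_i^\ell\partial_i^\ell$ in the noncommutative algebra $\Dqn$ (Corollary \ref{cor:centeretc}), strongly suggests the intended mechanism is to define $\Frl$ through the quantum moment map ideal and the vector-space identification of $\Dqn/\Dqn(\mu_{q,K}-\eta)$ with $\O(\mu_K^{-1}(\eta))$, rather than through the naive classical ideals; neither your proposal nor the paper's suppressed sketch carries this out.
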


\begin{comment} \begin{proof} The inclusion $\C[x_i^\ell, \partial_i^\ell] \hookrightarrow \C[x_i, \partial_i]$ induces a finite map $$\left( \O((T^*\C^n)\eell)^{\circ} / ( \mu^{(\ell)}  - \eta^\ell) \right)_{\chi\eell} \rightarrow \left(\O(T^*\C^n)^{\circ} / ( \mu  - \eta) \right)_{\chi^\ell}.$$ Hence we a composition of finite maps $$\bigoplus_{n \geq 0}\left( \O((T^*\C^n)\eell)^{\circ} / ( \mu^{(\ell)}  - \eta^\ell) \right)_{(\chi\eell)^n} \rightarrow \bigoplus_{n \geq 0} \left(\O(T^*\C^n)^{\circ} / ( \mu  - \eta) \right)_{\chi^{n\ell}} \rightarrow \bigoplus_{m \geq 0} \left(\O(T^*\C^n)^{\circ} / ( \mu  - \eta) \right)_{\chi^{m}} .$$ \end{proof}
\end{comment}

\subsection{Braided deformations of $\Rep(K)$}\label{subsec:pre:braideddeform}

Let $\Rep(K)$ denote the tensor category of algebraic representations of $K = (\C^\times)^d$ over $\C$. Any object $V$ of $\Rep(K)$ admits a decomposition into isotypic components given by $V = \bigoplus_{\mathbf r \in \Z^d} V_{\mathbf r},$ where $$V_\mathbf r =  \{ v \in V \ | \ k\cdot v = \prod_{j} k_j^{r_j}v \ \text{for all} \ k= (k_j) \in K\}  .$$ This decomposition establishes an equivalence of categories between $\Rep(K)$ and the category of $\Z^d$-graded vector spaces. 

\begin{definition} \label{eq:nontrivbraid} Given $q \in \C^\times$ and a bilinear form $\langle, \rangle$ on $X^*(K)$, a braiding on $\Rep(K)$ is given by: \begin{equation} \sigma^{(q)}_{V,W} : V \otimes W \rightarrow W \otimes V $$ $$ v\otimes w \mapsto q^{\langle \mathbf r , \mathbf s \rangle} (w \otimes v),\end{equation} where $v \in V_{\mathbf r}$ and $w \in W_{\mathbf s}$ are homogeneous elements of degrees $\mathbf r$ and $\mathbf s$, respectively. The resulting braided tensor category is denoted $\Rep_q(K)$ or $\Rep_{(q, \langle, \rangle)}(K)$. \end{definition}

\begin{rmk} If $\langle, \rangle$ is  the dot product $\langle \mathbf r, \mathbf s \rangle = \sum_i r_i s_i$, then $\Rep_q(K)$ coincides with the Deligne tensor product of categories $\Rep_q(\C^\times)^{\boxtimes d}$.   \end{rmk}

Suppose $q \in \C^\times$ is a primitive $\ell$-th root of unity. Let $q_j$ be the image of $q$ under the $j$th inclusion $\C^\times \hookrightarrow K$, for $1 \leq j \leq d$. Let $\Gamma$ the group of $\ell$-th roots of unity in $K$, i.e.\ the subgroup generated by the  $q_j$. Any object $V$ of the category $\Rep(\Gamma)$  decomposes as $V = \bigoplus_{\mathbf r \in (\Z/\ell\Z)^d} V_{\mathbf r}$ with $V_{\mathbf r} = \{ v \in V \ | \ q_j \cdot v = q^{r_j} v \ \text{for $1 \leq j \leq d$} \}.$ 

\begin{definition} Define $\Rep_q(\Gamma)$ to be the braided tensor category whose underlying tensor category is $\Rep(\Gamma)$ with braiding given by the formula in Definition \ref{eq:nontrivbraid} with $\mathbf r, \mathbf s \in (\Z/\ell\Z)^d$.\end{definition}

\begin{notation} Henceforth, when we write $\Rep_q(K)$, we  assume that $q$ and $\langle, \rangle$ have been fixed; when we write $\Rep_q(\Gamma)$ we assume, in addition, that $q$ is a primitive $\ell$-th root of unity. \end{notation}

\subsection{Braided tensor products of matrix algebras}\label{subsec:pre:tensoralgebras}\label{subsec:pre:deformmatrix}

If $A$ and $B$ are algebras in a braided tensor category $({\mathcal C}, \otimes, \sigma)$, then the tensor product $A \otimes B$ inherits an algebra structure in $\cC$ given by  the composition $$m_{A \otimes B} : (A \otimes B) \otimes (A \otimes B) \stackrel{1 \otimes \sigma_{A,B} \otimes 1}{\longrightarrow} A \otimes A \otimes B \otimes B \stackrel{m_A \otimes m_B}{\longrightarrow} A \otimes B,$$ where $\sigma_{A,B} : A \otimes B \rightarrow B \otimes A$ denotes the braiding. This construction extends to the tensor product $A_1 \otimes \cdots \otimes A_r$ of an arbitrary (ordered) collection of algebras in $\cC$.

\begin{example} Suppose $A$ and $B$ are algebras in the tensor category $\Rep_q(K)$, i.e.\ graded algebras. The tensor product $A \otimes B$ has the same underlying vector space as the ordinary tensor product and multiplication defined by $$(a_1 \otimes b_1) \cdot (a_2 \otimes b_2) = q^{\langle \mathbf r, \mathbf s \rangle} (a_1 a_2) \otimes (b_1 b_2),$$ where $a_2 \in A_{\mathbf s}$ and $b_1 \in B_{\mathbf r}$ are homogeneous elements.\end{example}

In the following lemma (whose proof is an exercise), $\cC$ denotes either $\Rep_q(K)$ or $\Rep_q(\Gamma)$.

\begin{lemma} \label{lem:quot-tensor} Let $A$ and $B$ be algebras in $\cC$. Suppose $z \in A_{\mathbf 0}$ and $w \in B_{\mathbf 0}$ are invariant elements such that the left ideals $Az$ and $Bw$ are 2-sided. Then the left ideal $A \otimes B(z \otimes 1, 1 \otimes w)$ is 2-sided and there is an isomorphism of algebras in $\cC$: $$A/Az \otimes B/Bw \simeq (A \otimes B)/ A\otimes B(z \otimes 1, 1 \otimes w).$$\end{lemma}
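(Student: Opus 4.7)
The key observation driving the proof is that the braiding acts trivially whenever one factor is invariant: if $z \in A_{\mathbf 0}$ and $b \in B_{\mathbf r}$ is homogeneous, then $\sigma_{A,B}(z \otimes b) = q^{\langle \mathbf 0, \mathbf r\rangle}(b\otimes z) = b \otimes z$, and symmetrically for $1 \otimes w$. My plan is to leverage this to reduce everything to bookkeeping that is identical to the classical (non-braided) case.

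First, I would spell out how $z \otimes 1$ and $1 \otimes w$ multiply against a homogeneous elementary tensor $a \otimes b \in A \otimes B$. Using the braided multiplication $m_{A \otimes B}$ combined with the observation above, one computes directly
\[
(z \otimes 1)(a \otimes b) = za \otimes b, \qquad (a \otimes b)(z \otimes 1) = az \otimes b,
\]
and likewise $(1 \otimes w)(a \otimes b) = a \otimes wb$ and $(a \otimes b)(1 \otimes w) = a \otimes bw$. Combined with the hypothesis that $Az$ and $Bw$ are two-sided, this shows that the left ideal $(A \otimes B)(z \otimes 1,\, 1 \otimes w)$ is two-sided and coincides, as a subspace, with $Az \otimes B + A \otimes Bw$.

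Next, I would identify the quotient. As graded vector spaces,
\[
(A \otimes B)\big/(Az \otimes B + A \otimes Bw) \;\cong\; (A/Az) \otimes (B/Bw),
\]
via the obvious map induced by the product of the two projections. To upgrade this to an isomorphism of algebras in $\cC$, it suffices to check that the braided multiplication $m_{A/Az \otimes B/Bw}$ on the right-hand side agrees with the multiplication inherited from $A \otimes B$ on the left. But both are computed by the same formula on homogeneous representatives, namely $(\bar a_1 \otimes \bar b_1)(\bar a_2 \otimes \bar b_2) = q^{\langle \deg b_1, \deg a_2\rangle}\, \overline{a_1 a_2} \otimes \overline{b_1 b_2}$, which is well defined precisely because $Az$ and $Bw$ are homogeneous two-sided ideals (being generated by invariant elements).

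The only conceivable obstacle is verifying that the grading-dependent braiding factor is compatible with passage to the quotient, but this is immediate once one notes that $Az$ and $Bw$ are $\Z^d$-graded (since $z$ and $w$ have degree $\mathbf 0$), so the quotient algebras remain objects of $\cC$ with a well-defined degree function. The statement for $\cC = \Rep_q(\Gamma)$ is proved identically, replacing $\Z^d$ by $(\Z/\ell\Z)^d$ throughout.
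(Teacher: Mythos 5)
Your proof is correct, and since the paper explicitly leaves this lemma as an exercise, your direct computation — checking that the braiding factors $q^{\langle \mathbf 0, \cdot\rangle}$ and $q^{\langle \cdot, \mathbf 0\rangle}$ are trivial, identifying the left ideal with $Az \otimes B + A \otimes Bw$, and matching the quotient multiplication on homogeneous representatives — is exactly the intended argument. Nothing is missing; the only point worth stating a touch more explicitly is that two-sidedness of $Az$ gives $a_1 z a_2 \in Az$, which is what closes the ideal $Az\otimes B + A\otimes Bw$ under right multiplication, and you have all the ingredients for that in place.
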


Let $N$ be a positive integer and let $\Mat(N)$ denote the algebra of $N$ by $N$ matrices over $\C$. For $1 \leq i,j\leq N$, the elementary matrix $\elm_{i,j}$ is the matrix in $\Mat(N)$ with the entry 1 in the $i$th row and $j$th column and all other entries equal to 0. Any function $f: \{1,\dots, N\} \rightarrow \Z^d$ determines a $\Z^d$-grading on the algebra $\Mat(N)$ by setting $ \deg(\elm_{ij}) = f(i) - f(j)$ for an elementary matrix $\elm_{ij}$, where $1 \leq i,j\leq N$. Let $(\Mat(N), f)$ denote the matrix algebra with the $\Z^d$-grading determined by $f$. We regard $(\Mat(N),f)$ as an algebra in $\Rep_q(K)$ for any $q \in \C^\times$.

Let $M$ be a positive integer and consider a collection $\{ (\Mat(N_i), f_i)\}_{i=1}^M$ of $\Z^d$-graded matrix algebras. Set $N = \prod_{i=1}^M N_e$ and observe that every integer $n$ with $1 \leq n \leq N$ can be written as  $n = \sum_{i=1}^M N_1 \cdots N_{i-1} (n_i -1) + 1$ for unique $n_i$ satisfying $1 \leq n_i \leq  N_i$. Define $f: \{1,\dots, N\} \rightarrow \Z^d$  as $f( n) = \sum_{i=1}^M f_i(n_i).$

\begin{prop} \label{prop:deformmatrix} There is an isomorphism ${\bigotimes}_{i=1}^M (\Mat(N_i), f_i) \stackrel{\sim}{\longrightarrow} (\Mat(N),f)$ of algebras in $\Rep_q(K)$.  \end{prop}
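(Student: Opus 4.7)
The plan is to construct an explicit linear bijection that rescales the obvious ``elementary matrix to elementary matrix'' correspondence by appropriate $q$-factors, and to verify directly that this rescaling converts the braided multiplication into the ordinary matrix multiplication. The underlying vector spaces of both sides are canonically identified, so only the compatibility of scalars requires work.

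First I would handle the base case $M=2$ directly. Define
$$\phi\bigl(\elm^{(1)}_{ab} \ot \elm^{(2)}_{cd}\bigr) \;=\; q^{\langle f_2(c) - f_2(d),\, f_1(b)\rangle}\, \elm_{(a,c),(b,d)}$$
on the standard basis of elementary matrices and extend linearly. Since every scalar is nonzero, $\phi$ is a linear bijection. The degree of $\elm^{(1)}_{ab} \ot \elm^{(2)}_{cd}$ in the braided tensor product is $f_1(a) - f_1(b) + f_2(c) - f_2(d)$, which matches the degree of $\elm_{(a,c),(b,d)}$ in $(\Mat(N_1 N_2), f)$; hence $\phi$ is a morphism in $\Rep_q(K)$.

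Next I would verify that $\phi$ is an algebra homomorphism by a direct computation on a generic product. Using the braided multiplication formula, one has
$$(\elm^{(1)}_{ab} \ot \elm^{(2)}_{cd})(\elm^{(1)}_{b'c'} \ot \elm^{(2)}_{d'e'}) = q^{\langle f_2(c) - f_2(d),\, f_1(b') - f_1(c')\rangle}\,\delta_{bb'}\delta_{dd'}\, \elm^{(1)}_{ac'} \ot \elm^{(2)}_{ce'}.$$
Applying $\phi$ yields an exponent of $\langle f_2(c)-f_2(d), f_1(b)-f_1(c')\rangle + \langle f_2(c)-f_2(e'), f_1(c')\rangle$ (after collapsing the Kronecker deltas), while the product $\phi(\elm^{(1)}_{ab} \ot \elm^{(2)}_{cd})\cdot\phi(\elm^{(1)}_{b'c'} \ot \elm^{(2)}_{d'e'})$ yields $\langle f_2(c)-f_2(d), f_1(b)\rangle + \langle f_2(d)-f_2(e'), f_1(c')\rangle$. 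Expanding by bilinearity shows these two expressions are equal as an identity in the four variables $f_1(b), f_1(c'), f_2(c), f_2(d), f_2(e')$, without any symmetry hypothesis on $\langle,\rangle$.

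For general $M$, I would proceed by induction using associativity of the braided tensor product: writing
$${\bigotimes}_{i=1}^M (\Mat(N_i), f_i) \;\simeq\; \Bigl({\bigotimes}_{i=1}^{M-1} (\Mat(N_i), f_i)\Bigr) \ot (\Mat(N_M), f_M),$$
by induction the inner factor is isomorphic (as an algebra in $\Rep_q(K)$) to $(\Mat(N'), f')$ with $N' = \prod_{i<M} N_i$ and $f'(n') = \sum_{i<M} f_i(n_i)$, and then the $M=2$ case applied to this product furnishes the desired isomorphism. A routine bookkeeping check confirms that the hierarchical encoding of indices produced by the induction coincides with the flat encoding $n = \sum_i N_1 \cdots N_{i-1}(n_i - 1) + 1$ prescribed in the statement, and that $f'(n') + f_M(n_M) = f(n)$.

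The main obstacle is selecting the correct $q$-exponent in Step 1: a symmetric-looking candidate such as $q^{\langle f_1(b),\, f_2(c)-f_2(d)\rangle}$ produces an algebra map only when $\langle,\rangle$ is symmetric, whereas the asymmetric choice above (placing the ``second-factor degree'' in the first slot of $\langle,\rangle$) allows the cross-terms in the cocycle computation to cancel for any bilinear form. Once this choice is identified, the remaining verifications are mechanical.
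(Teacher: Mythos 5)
Your proof is correct, and it takes essentially the same approach as the paper's: reduce to the case $M=2$ by induction and trivialize the braiding by an explicit $q$-power rescaling of the standard basis of elementary matrices. The only cosmetic difference is that the paper realizes this rescaling as a diagonal twist $\elm\otimes Y\mapsto \elm\otimes\Delta(\elm)Y$ of the trivially braided tensor product and then invokes the classical $q=1$ isomorphism, whereas you map directly onto $\Mat(N_1N_2)$ and verify the resulting cocycle identity by hand (your bilinearity computation does check out, with no symmetry assumption on $\langle\,,\rangle$ needed).
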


\begin{proof}[Sketch of proof]   By induction, it suffices to prove the case $M = 2$. For each elementary matrix $\elm \in \Mat(N_1)$, define a diagonal matrix $\Delta(\elm) \in \Mat(N_2)$ whose $k$-th diagonal entry is $q^{2\langle f_2(k)  ,\deg(\elm)\rangle},$ for $1 \leq k \leq N_2$. Let  $\otimes_q$ the tensor product in $\Rep_q(K)$. Define a map $$\phi : (\Mat(N_1),f_1) \otimes_1 (\Mat(N_2),f_2) \rightarrow  (\Mat(N_1),f_1)  \otimes_q (\Mat(N_2),f_2)$$ 
by setting $\phi(\elm \otimes Y) = \elm \otimes \Delta(\elm) Y,$ and extending linearly, where $\elm \in \Mat(N_1)$ is an arbitrary elementary matrix. One shows that $\phi$ is a $K$-equivariant isomorphism. The result follows from the case $q=1$, i.e.\ the case of  trivial braiding,  which is elementary. \end{proof}

Suppose $q$ is a primitive $\ell$-th  root of unity. For $1 \leq i \leq M$, write $\overline f_i$  (resp.\ $\overline f$) for the composition of $f_i$ (resp.\ $f$) with the quotient map $\Z^d \rightarrow (\Z/\ell\Z)^d$, which endows $\Mat(N_i)$ (resp.\ $\Mat(N)$) with a $(\Z/\ell\Z)^d$-grading, equivalently, an action of  $\Gamma$. 

\begin{cor} \label{cor:deformmatrix} There is an isomorphism $ {\bigotimes}_{i=1}^M (\Mat(N_i), \overline f_i)  \stackrel{\sim}{\longrightarrow} (\Mat(N),\overline f)$ of algebras in $\Rep_q(\Gamma)$.  \end{cor}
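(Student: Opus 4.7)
The plan is to deduce the corollary from Proposition \ref{prop:deformmatrix} by constructing a ``reduction mod $\ell$'' functor $F: \Rep_q(K) \to \Rep_q(\Gamma)$ and applying it to the isomorphism already provided by the proposition. On objects, $F$ sends a $\Z^d$-graded vector space $V = \bigoplus_{\mathbf r \in \Z^d} V_{\mathbf r}$ to the same underlying vector space equipped with the $(\Z/\ell\Z)^d$-grading $F(V)_{\overline{\mathbf r}} = \bigoplus_{\mathbf s \equiv \mathbf r \pmod \ell} V_{\mathbf s}$, and on morphisms $F$ acts as the identity on underlying linear maps. This is visibly a tensor functor: the tensor product decomposition on the right is the sum over all pairs of $\Z^d$-degrees whose reductions sum to the given residue class.

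The only nontrivial verification is that $F$ is \emph{braided}. Both braidings are given on homogeneous vectors by multiplication by the scalar $q^{\langle \mathbf r, \mathbf s \rangle}$, so it suffices to observe that this scalar depends only on the residue classes $\overline{\mathbf r}, \overline{\mathbf s} \in (\Z/\ell\Z)^d$. Since $\langle, \rangle$ takes integer values on $\Z^d$, one has $\langle \mathbf r + \ell \mathbf r', \mathbf s \rangle = \langle \mathbf r, \mathbf s \rangle + \ell \langle \mathbf r', \mathbf s \rangle$, and $q^\ell = 1$ eliminates the second term. Hence $F$ is a well-defined braided monoidal functor, and therefore sends algebras and isomorphisms of algebras in $\Rep_q(K)$ to algebras and isomorphisms of algebras in $\Rep_q(\Gamma)$.

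Finally, I would apply $F$ to the isomorphism $\bigotimes_{i=1}^M (\Mat(N_i), f_i) \stackrel{\sim}{\longrightarrow} (\Mat(N), f)$ of Proposition \ref{prop:deformmatrix}. Since $\overline f_i$ and $\overline f$ are by definition the compositions of $f_i$ and $f$ with the projection $\Z^d \to (\Z/\ell\Z)^d$, one has $F((\Mat(N_i), f_i)) = (\Mat(N_i), \overline f_i)$ and $F((\Mat(N), f)) = (\Mat(N), \overline f)$. Monoidality of $F$ then yields an identification $F\bigl(\bigotimes_{i=1}^M (\Mat(N_i), f_i)\bigr) = \bigotimes_{i=1}^M (\Mat(N_i), \overline f_i)$ of algebras in $\Rep_q(\Gamma)$, and the image of the isomorphism of the proposition is the desired isomorphism in the corollary.

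The main (and essentially only) obstacle is the verification that $F$ is braided, which is precisely where the hypothesis that $q$ is a primitive $\ell$-th root of unity enters. Once that compatibility is established, the corollary follows formally from the proposition by transport of structure along $F$.
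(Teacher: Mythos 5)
Your proof is correct, and it matches the paper's (implicit) reasoning: the paper offers no separate argument for the corollary, treating it as an immediate consequence of Proposition \ref{prop:deformmatrix}, and your reduction-mod-$\ell$ braided monoidal functor is exactly the right way to make that deduction precise. The key point you isolate --- that $q^{\langle \mathbf r, \mathbf s\rangle}$ depends only on residues modulo $\ell$ because $q^\ell = 1$ --- is also what makes the braiding on $\Rep_q(\Gamma)$ well defined in the first place, so your argument cleanly justifies both the statement and the category it lives in.
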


\subsection{Quantum moment maps}\label{subsec:pre:qmm} We recall basic facts about quantum Hamiltonian reduction.

\begin{definition}\label{def:qmm}[Adapted from \cite{VV}.]  Let $H$ be a Hopf algebra and $D$ an algebra in $H\dmod$. An algebra homomorphism $\qmm : H \rightarrow D$ is called a {\it quantum moment map} if, for any $h \in H$ and $a \in D$, the following identity holds: $$\qmm(h)\cdot a = (h_{(1)} \rhd  a) \qmm(h_{(2)}), $$ where we use Sweedler notation $\Delta(h) = h_{(1)} \otimes h_{(2)}$, and  `$\rhd$' denotes the action of $H$ on $D$. \end{definition} 

Let  $\qmm : H \rightarrow D$ be a quantum moment map,  $\eta: H \rightarrow \C$  a character of $H$, and $I_\eta = D(\mu -\eta)$  the left ideal of $D$ generated by the set of all $\qmm(h) - \eta(h)\cdot 1$ for $h\in H$. Then $I_\eta$ is an $H$-submodule, and we have the following standard result \cite[Section 3.4]{BFG}:

\begin{prop}\label{prop:qhamred} The algebra structure on $D$ induces an algebra structure on the invariants $(D/I_\eta)^H$, called the {\rm quantum Hamiltonian reduction} of $D$ by $H$ along $\eta$. \end{prop}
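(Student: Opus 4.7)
The plan is to verify three ingredients behind the definition: $(a)$ the left ideal $I_\eta$ is an $H$-submodule of $D$, so that $D/I_\eta$ inherits an $H$-module structure; $(b)$ for every lift $b\in D$ of a class in $(D/I_\eta)^H$, one has $I_\eta \cdot b \subseteq I_\eta$, which makes the product $[a][b] := [ab]$ well-defined on invariants; and $(c)$ the product of two $H$-invariants of $D/I_\eta$ is again $H$-invariant. Associativity will then be inherited from the product on $D$.

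For $(a)$, the first step is to extract from Definition \ref{def:qmm}, together with the fact that $\mu$ is an algebra homomorphism, the identity $h \rhd \mu(h') = \mu\bigl(\sum h_{(1)} h' S(h_{(2)})\bigr)$; that is, $\mu$ intertwines the adjoint $H$-action on itself with the action on $D$. Since any character $\eta : H \to \C$ automatically satisfies $\eta(\sum h_{(1)} h' S(h_{(2)})) = \epsilon(h)\eta(h')$, the map $\mu - \eta$ is $H$-equivariant, and the $H$-stability of $I_\eta = D\cdot(\mu-\eta)(H)$ then follows from the module Leibniz rule $h \rhd (da) = \sum (h_{(1)}\rhd d)(h_{(2)}\rhd a)$. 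Part $(c)$ will be handled similarly: decomposing each factor in $h \rhd (ab)$ into an invariant piece plus a term in $I_\eta$ leaves only cross terms of the form $a\cdot r$, $r'\cdot b$, or $r'\cdot r$ with $r,r' \in I_\eta$, which lie in $I_\eta$ by $(b)$ together with the left-ideal property.

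The main obstacle is $(b)$. For a generator $i = d(\mu(h')-\eta(h'))$ of $I_\eta$, I would apply the quantum moment map identity in the form $\mu(h')b = \sum (h'_{(1)}\rhd b)\mu(h'_{(2)})$ and then use the hypothesis on $[b]$ to write $h'_{(1)}\rhd b = \epsilon(h'_{(1)})b + r_{h'_{(1)}}$ with $r_{h'_{(1)}} \in I_\eta$. Collecting Sweedler sums reduces $ib$ to $db(\mu(h')-\eta(h')) + d\sum r_{h'_{(1)}}\mu(h'_{(2)})$; the first summand is manifestly in $I_\eta$. For the second, expanding each $r_{h'_{(1)}}$ as $\sum_k d_k(\mu(h_k)-\eta(h_k))$ and exploiting that $\mu$ and $\eta$ are algebra maps yields the identity
\[
(\mu(h_k)-\eta(h_k))\mu(h'_{(2)}) = (\mu-\eta)(h_k h'_{(2)}) - \eta(h_k)(\mu-\eta)(h'_{(2)}),
\]
exhibiting the second summand as an element of $I_\eta$. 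This step is the crux, because it requires coordinating the quantum moment map identity, the $H$-invariance of $[b]$, and the multiplicative structures on both $H$ and $D$ simultaneously. Once $(b)$ is in hand, well-definedness of the product on $(D/I_\eta)^H$ is immediate, completing the construction.
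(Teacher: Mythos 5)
Your argument is correct and complete. The paper itself gives no proof of this proposition, citing it as a standard result from \cite[Section 3.4]{BFG}; your three steps --- the ad-equivariance identity $h \rhd \mu(h') = \mu\bigl(h_{(1)} h' S(h_{(2)})\bigr)$ giving $H$-stability of $I_\eta$, the crucial verification that $I_\eta \cdot b \subseteq I_\eta$ for invariant classes $[b]$, and the closure of invariants under multiplication --- constitute exactly the standard proof of that result.
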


We discuss quantum moment maps in the context of representations of a torus. Any representation $V$ of the torus $K = (\C^\times)^d$ carries the structure of a comodule for the Hopf algebra $\O(K)$:
$$\rho : V \rightarrow V \otimes \O(K) \qquad v \mapsto v \otimes z^\mathbf r, \qquad \text{for} \ v \in V_\mathbf r.$$ A bilinear map $\langle , \rangle : \Z^d \times \Z^d \rightarrow \Z$ induces a map $b: \O(K) \otimes \O(K) \rightarrow \C[t^{\pm 1}]$ taking $(z^\mathbf r, z^\mathbf s)$ to $t^{\langle \mathbf r, \mathbf s \rangle}$. 

\begin{prop}\label{prop:RepKOKmod} Let $q \in \C^\times$. There is an action of $\O(K)$ on $V$ defined by the composition 
$$ V \otimes \O(K) \stackrel{\rho  \otimes \id}{\longrightarrow} V \otimes \O(K) \otimes \O(K) \stackrel{\id \otimes b}{\longrightarrow} V \otimes \C[t^{\pm 1}]  \stackrel{\id \otimes \text{ev}_q}{\longrightarrow} V.$$ We use the symbol `$\rhd$' for this action. Explicitly, $z^\mathbf r \rhd v = q^{\langle \mathbf r, \mathbf s \rangle} v,$ for $v \in V_\mathbf s$. \end{prop}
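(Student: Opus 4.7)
The plan is to establish the proposition by (i) computing the composition explicitly on homogeneous elements to recover the stated formula, and (ii) verifying the two $\O(K)$-module axioms from that formula. Both stages are short and mechanical, and the entire argument hinges on the bilinearity of $\langle, \rangle$ and the multiplicativity of $\text{ev}_q$.

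For step (i), I would trace a pure tensor $v \otimes z^\mathbf r$ with $v \in V_\mathbf s$ homogeneous through each arrow of the composition. By the definition of the comodule structure $\rho$, the first map sends this to $v \otimes z^\mathbf s \otimes z^\mathbf r$. Applying $\id \otimes b$ gives $v \otimes t^{\langle \mathbf s, \mathbf r\rangle}$ by the definition of $b$, and then $\id \otimes \text{ev}_q$ yields $q^{\langle \mathbf s, \mathbf r\rangle} v$, which matches the stated closed formula $z^\mathbf r \rhd v = q^{\langle \mathbf r, \mathbf s\rangle} v$ up to the convention on the order of the arguments to $\langle, \rangle$ (this is the one point that must be tracked carefully to keep conventions consistent with the braiding on $\Rep_q(K)$ defined earlier, since there the factor being moved to the right contributes the first index).

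For step (ii), once the explicit formula is in hand, the verification is immediate. The unit $z^\mathbf 0$ acts as the identity because $\langle \mathbf 0, \mathbf s\rangle = 0$ by linearity of $\langle, \rangle$ in the first variable. For compatibility with multiplication, we have on the one hand $(z^\mathbf r z^{\mathbf r'}) \rhd v = z^{\mathbf r + \mathbf r'} \rhd v = q^{\langle \mathbf r + \mathbf r', \mathbf s\rangle} v$, and on the other $z^\mathbf r \rhd (z^{\mathbf r'} \rhd v) = q^{\langle \mathbf r, \mathbf s\rangle} q^{\langle \mathbf r', \mathbf s\rangle} v$; these coincide by bilinearity of $\langle, \rangle$. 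Since the monomials $\{ z^\mathbf r : \mathbf r \in \Z^d \}$ span $\O(K)$ as a vector space and the formula is linear in both slots, extending by linearity defines an action of $\O(K)$ on all of $V$ (summing across isotypic components).

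There is no real obstacle; the content of the proposition is essentially that exponentiating a bilinear form along a group homomorphism produces an algebra action, which works precisely because $\langle, \rangle$ is additive in its first argument. The only item requiring any care is the bookkeeping of conventions so that the composition, the explicit formula, and the braiding on $\Rep_q(K)$ from Definition \ref{eq:nontrivbraid} are all compatible.
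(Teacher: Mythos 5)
Your proof is correct, and it supplies exactly the routine verification that the paper omits (the proposition is stated without proof): trace a pure tensor through the composition to get the closed formula, then check unitality and multiplicativity via bilinearity of $\langle,\rangle$. You are also right to flag that the composition literally yields $q^{\langle \mathbf s, \mathbf r\rangle}$ while the displayed formula reads $q^{\langle \mathbf r, \mathbf s\rangle}$ --- these agree only when the form is symmetric (as it is in the paper's quiver examples), so this is a harmless convention mismatch in the statement rather than a gap in your argument.
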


\begin{comment} Note that the space $V_\mathbf 0$ of invariants of $V$ under the action of $T$ coincides with the space $V^{\O(T)}$ of invariants of $V$ under the action of the Hopf algebra $\O(T)$. 
\end{comment}

Thus we have a faithful tensor functor $B: \Rep(K) \rightarrow \O(K)\dmod$ that commutes with the forgetful functors to vector spaces.

\begin{definition} \label{def:qmmOK} Suppose $D$ is an algebra in $\Rep(K)$.  An algebra homomorphism $\mu : \O(K) \rightarrow D$ is called a {\it  quantum moment map} for the action of $K$ on $D$ if it is a quantum moment map when $D$ is regarded as an algebra in $\O(K)\dmod$ via the functor $B$.   \end{definition} 

\begin{comment} \begin{lemma} The following diagram of functors commutes: \[ \xymatrix{ \Rep(T)  \ar[r]^{B \quad } \ar[d] & \O(T)\text{-mod}  \ar[d] \\ \Rep(K) \ar[r]^{B \quad }& \O(K)\text{-mod} }\] where the left (resp.\ right) vertical map is induced by precomposition with $\phi$ (resp.\ $\phidaggerstar$). \end{lemma} 
\end{comment}

\begin{lemma}\label{lem:phidagger} Suppose $D$ is an algebra in $\Rep(T)$ and $\mu : \O(T) \rightarrow D$ is a quantum moment map. Then the composition $\O(K) \stackrel{\phidaggerstar}{\longrightarrow} \O(T) \stackrel{\mu}{\longrightarrow} D$ is a quantum moment map for the action of $K$ on $D$. \end{lemma}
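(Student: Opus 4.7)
The plan is to verify the quantum moment map axiom of Definition \ref{def:qmm} for $\mu \circ \phidaggerstar$ on a set of algebra generators of $\O(K)$, namely the Laurent monomials $z^\mathbf{r}$ for $\mathbf{r}\in\Z^d$. Each such $z^{\mathbf r}$ is group-like, so $\Delta(z^\mathbf{r}) = z^\mathbf{r}\otimes z^\mathbf{r}$ and Sweedler notation collapses. The axiom then reduces to showing
$$ (\mu \circ \phidaggerstar)(z^\mathbf{r}) \cdot a \;=\; (z^\mathbf{r} \rhd_K a)\cdot (\mu \circ \phidaggerstar)(z^\mathbf{r}) $$
for every homogeneous $a \in D$ and every $\mathbf{r} \in \Z^d$, where $\rhd_K$ denotes the $\O(K)$-action on $D$ supplied by Proposition \ref{prop:RepKOKmod}, once $D$ has been pulled back along $\phi$ to an object of $\Rep(K)$.

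The main step is a compatibility between $\rhd_T$ and $\rhd_K$. If $a\in D$ has $T$-weight $\mathbf{t} \in \Z^n$, then, since the $K$-action on $D$ is the restriction along $\phi$, the $K$-weight of $a$ is $M^T\mathbf{t} \in \Z^d$. Using the standard dot product to define both $\rhd_T$ and $\rhd_K$ and the adjoint identity $\langle \mathbf{r}, M^T\mathbf{t}\rangle = \langle M\mathbf{r}, \mathbf{t}\rangle$, I would compute
$$ z^\mathbf{r} \rhd_K a \;=\; q^{\langle \mathbf{r},\, M^T\mathbf{t}\rangle}\, a \;=\; q^{\langle M\mathbf{r},\, \mathbf{t}\rangle}\, a \;=\; y^{M\mathbf{r}} \rhd_T a. $$
Combined with the explicit formula $\phidaggerstar(z^\mathbf{r}) = y^{M\mathbf{r}}$ coming from Section \ref{subsec:pre:tori}, this says that the $\O(K)$-module structure on $D$ produced by the functor $B_K$ agrees with the pullback of the $\O(T)$-module structure $B_T(D)$ along $\phidaggerstar$.

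With this compatibility in hand, the conclusion is essentially formal. Applying the quantum moment map axiom for $\mu : \O(T)\to D$ to the group-like element $y^{M\mathbf{r}}$ yields $\mu(y^{M\mathbf{r}})\,a = (y^{M\mathbf{r}} \rhd_T a)\,\mu(y^{M\mathbf{r}})$; rewriting the left-hand side via $\mu(y^{M\mathbf{r}}) = (\mu\circ\phidaggerstar)(z^\mathbf{r})$ and the right-hand side via the compatibility above delivers the required identity for $\mu\circ\phidaggerstar$. Finally, the identity on each generator $z^\mathbf{r}$ extends to all of $\O(K)$ because both sides are multiplicative in $z^\mathbf{r}$ (again using that each $z^\mathbf{r}$ is group-like, so the Sweedler expansion is trivial) and linear in $a$. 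No serious obstacle arises; the only point that demands care is correctly tracking the transpose when transferring weights between $\Z^n$ and $\Z^d$, which is exactly what the presence of $\phidaggerstar$ (rather than $\phi^\#$) in the statement encodes.
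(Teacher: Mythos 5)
Your proof is correct, and it follows the route the paper intends (the paper states this lemma without proof, but the author's commented-out source contains precisely your key step: the commutativity of the square relating $B\colon\Rep(T)\to\O(T)\dmod$ and $B\colon\Rep(K)\to\O(K)\dmod$ via restriction along $\phi$ and $\phidaggerstar$). The one point worth flagging is that your weight compatibility $\langle \mathbf r, M^T\mathbf t\rangle_K = \langle M\mathbf r,\mathbf t\rangle_T$ is a hypothesis on the pair of bilinear forms chosen for $X^*(K)$ and $X^*(T)$ rather than an automatic fact; it holds for the standard dot products (and for the normalizations actually used in the paper's applications), but the paper leaves this compatibility implicit, and your proof correctly isolates it as the only nontrivial input.
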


\begin{rmk} If $D$ is a locally finite $U_q\g$-algebra, then one considers quantum moment maps of the form $\mu :  \O_q(G) \rightarrow D$ in the category of $U_q\g$-modules. When $\g = \frakk$ is the Lie algebra of a torus $K$, the data of such a map is equivalent to the data of a quantum moment map in the sense of Definition \ref{def:qmmOK}, since $U_q\frakk \simeq \O_q(K) \simeq \O(K)$.\end{rmk}

%%%%%%%%%%%%%%%%%%%%%%%%%%%%%%%%%%
\section{An algebra of difference operators}\label{sec:Dqn}

\subsection{The algebra $\Dqn$}\label{subsec:Dqn:def}

In this section, we define an algebra $\Dqc$ of $q$-difference operators on $\C$ and extend the construction to $\C^n$ using braided tensor products. These algebras are variants of the quantum Weyl algebras considered in \cite{GZ}.

\begin{definition}
Let $q \in \C^\times$. The algebra $\Dqc$ of $q$-difference operators on $\C$ is generated by elements $x$ and $\partial$ subject to the relation $\partial x = \qq x \partial + (\qq -1)$. The element $\alpha := 1 + x \partial$ is called the  Euler operator.
\end{definition}

In the following lemma (whose proof is elementary), the first assertion shows that $\Dqc$ is a flat noncommutative deformation of the algebra of functions on the cotangent bundle of $\C$, the second assertion justifies the name `difference operators,' and the third assertion includes the $q$-commutativity property of the Euler operator $\alpha$. 

\begin{lemma} \label{lem:dxn} Let $q\in \C^\times$. \begin{enumerate}
\item A PBW basis for $\Dqc$ is given by the ordered monomials $x^i\partial^j$ for $i,j  \geq0$. Hence, as a vector space, $\Dqc$ is isomorphic to the space $\C[x,\partial]$ of polynomials in two variables.

\item  An action of $\Dqc$ on $\C[t]$ is given by $$ (x \cdot f )(t) := t f(t) , \qquad  (\partial \cdot f)(t) = \frac{f(\qq t) -f(t)}{t}.$$ The Euler operator acts as $(\alpha \cdot f)(t) = f(q^2t)$. This module is isomorphic to the cyclic module $\Dqc/  \Dqc \partial$ under the map $t \mapsto x$. 
 
%Furthermore, we have $$\frac{d f}{d x} = \lim_{q\to 1}\frac{\partial\cdot f}{\qq-1}, \qquad (\alpha\cdot f)(t) = f(\qq t).$$

\item The following identities hold in $\Dqc$, for $i\geq1$: $$\alpha x= \qq x \alpha, \qquad \alpha \partial = q^{-2} \partial \alpha,$$  $$\partial^i x= q^{2i} x \partial^i + (q^{2i} - 1) \partial^{i-1},  \qquad \partial x^i = q^{2i} x^i \partial + (q^{2i} -1) x^{i-1}.$$ 

%\item The algebra $\Dqc$ isomorphic to the skew polynomial algebra\footnote{See \cite{BrownGoodearl} for the definition of a skew polynomial algebra.} $\C[x][\partial; \tau, \delta]$ with $\tau(x) = \qq x$ and $\delta(x^n) = (q^{2n}-1)x^{n-1}$. 
\end{enumerate}\end{lemma}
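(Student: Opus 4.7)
The three parts interact, so I would prove them in the order (2), (1), (3). Part (2) requires only the defining relation: checking directly that $(\partial x - q^2 x\partial - (q^2-1))\cdot f = 0$ for an arbitrary $f \in \C[t]$ shows the formulas give a well-defined module structure on $\C[t]$. The Euler formula $(\alpha \cdot f)(t) = f(q^2 t)$ then follows from $\alpha = 1 + x\partial$ and the computation $(x\partial \cdot f)(t) = t \cdot (f(q^2 t) - f(t))/t = f(q^2 t) - f(t)$. The canonical map $\Dqc / \Dqc \partial \to \C[t]$ sending the class of $1$ to $1$ (hence $x^i \mapsto t^i$) is well-defined since $\partial$ kills the constant $1$, and it is surjective; its injectivity will follow once (1) is established, because then $\Dqc / \Dqc\partial$ is spanned by the classes of $x^i$.

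For (1), spanning is a rewriting argument: the defining relation lets one push each $\partial$ past each $x$ to the right, and the number of $\partial x$-inversions in a word strictly decreases under this process (with strictly lower-degree correction terms appearing), so by induction every element of $\Dqc$ is a linear combination of ordered monomials $x^i \partial^j$. For linear independence, the safest route is Bergman's diamond lemma: the two-letter left-hand side $\partial x$ has no nontrivial self-overlap and cannot contain itself as a proper subword, so the single rewriting rule $\partial x \to q^2 x\partial + (q^2-1)$ gives rise to no ambiguities, and the irreducible monomials $x^i \partial^j$ form a basis for all $q \in \C^\times$. An alternative for generic $q$ is to use the action on $\C[t]$ and observe that $x^i \partial^j \cdot t^k$ is a nonzero scalar multiple of $t^{k+i-j}$ whenever $k \geq j$; this argument degenerates at roots of unity, which is precisely the setting of later sections, so the diamond lemma approach is preferable.

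For (3), each identity is a short induction on the exponent starting from the defining relation. To prove $\partial^i x = q^{2i} x \partial^i + (q^{2i}-1)\partial^{i-1}$, multiply the inductive hypothesis for $i-1$ on the left by $\partial$ and apply the base case once, collecting like powers. The identity $\partial x^i = q^{2i} x^i \partial + (q^{2i}-1) x^{i-1}$ is dual and proved identically with $x$ on the left. The $\alpha$-commutations $\alpha x = q^2 x\alpha$ and $\alpha\partial = q^{-2}\partial\alpha$ reduce to one-line computations: expand $\alpha = 1 + x\partial$, apply the defining relation once, and collect terms.

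The only genuine obstacle is the linear independence step in (1); the diamond lemma disposes of it cleanly because the single rewriting rule admits no overlap ambiguities, so no case analysis is required. Everything else amounts to direct computation using the defining relation.
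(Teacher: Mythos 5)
Your proof is correct. The paper omits the argument entirely (it states only that the proof is elementary), and your filling-in — direct verification of the module structure and the identities in (2) and (3), plus the diamond-lemma/Ore-extension-style argument for linear independence in (1), correctly flagging that the faithfulness-of-$\C[t]$ shortcut fails at roots of unity — is exactly the intended elementary route.
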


We now define the algebra $\Dqn$ for $n>1$. Adopt the notation of Section \ref{subsec:pre:tori}, so   $K = (\C^\times)^d$ and $T = (\C^\times)^n$ are standard tori and $\phi: K \hookrightarrow T$ is an inclusion. Fix $q \in \C^\times$. For $i = 1, \dots, n$, let $\Dqc_i$ denote a copy of the algebra $\Dqc$, with generators $x_i, \partial_i$ and Euler operator $\alpha_i = 1 + x_i\partial_i$. Define an action of $T$ on $\Dqc_i$ by $$t \cdot x_i = t_i x_i, \qquad t \cdot \partial_i = t_i\inv\partial_i.$$ Regard each $\Dqc_i$ as an algebra in the braided tensor category $\Rep_q(K)$ (see Section \ref{subsec:pre:braideddeform}) via the map  $\phi$.

\begin{definition}The algebra $\Dqn$ of $q$-difference operators on $\C^n$ is defined as the tensor product of the algebras $\Dqc_i$ in $\Rep_q(K)$: $$\Dqn = \bigotimes_{i=1}^n \Dqc_i.$$ 
The algebra $\U_q = \Dqn^K$ of $K$-invariants is called a hypertoric quantum group. \end{definition}

Recall that the map $\phi$ has the form $\phi(k)_i = \prod_{j=1}^r k_j^{m_{ij}}$ for some $m_{ij} \in \Z$. Define $\deg(i) \in \Z^d$ by $\deg(i)_j = m_{ij}$ and set  $q_{ij} = q^{\langle \deg(j), \deg(i) \rangle}$. The following lemma is straightforward to verify.

\begin{lemma}\label{lem:Dqnrelations} The algebra $\Dqn$ is generated by elements $x_1,\ldots, x_n$, $\partial_1,\ldots, \partial_n$ subject to the following relations for $i <j$:
$$ x_j x_i = q_{ij} x_i x_j, \qquad \partial_j \partial_i = q_{ij} \partial_i \partial_j,$$
$$\partial_j x_i = q_{ij}\inv x_i \partial_j, \qquad x_j \partial_i = q_{ij}\inv \partial_i x_j, \qquad \partial_i x_i = \qq x_i \partial_i + (\qq-1).$$ Consequently, for $i\neq j$, the Euler operator $\alpha_i$ commutes with each of $x_j$ and $\partial_j$. \end{lemma}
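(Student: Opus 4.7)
The proof is essentially bookkeeping with the braided multiplication formula $(a_1 \otimes b_1)(a_2 \otimes b_2) = q^{\langle \mathbf r, \mathbf s \rangle}(a_1 a_2) \otimes (b_1 b_2)$ from Section \ref{subsec:pre:deformmatrix}, applied iteratively to the $n$-fold tensor product $\bigotimes_{i=1}^n \Dqc_i$ in $\Rep_q(K)$. My plan is to first verify the five families of relations by direct computation on generators, and then argue that these relations suffice to present the algebra via a PBW argument.

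For the relations, I identify $x_i$ (resp.\ $\partial_i$) inside $\Dqn$ with the element having $x_i$ (resp.\ $\partial_i$) in the $i$-th tensor factor and $1$ elsewhere, so the $K$-weight is $\deg(i)$ (resp.\ $-\deg(i)$). For $i < j$, computing $x_j \cdot x_i$ in the braided tensor product requires braiding the element $x_j$ (of weight $\deg(j)$, sitting in the $j$-th slot of the left factor) past the element $x_i$ (of weight $\deg(i)$, sitting in the $i$-th slot of the right factor). This produces a scalar $q^{\langle \deg(j), \deg(i) \rangle} = q_{ij}$, giving $x_j x_i = q_{ij}\, x_i x_j$. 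The analogous computations for $\partial_j \partial_i$, $\partial_j x_i$, $x_j \partial_i$ use weights $-\deg(j), -\deg(i)$ as appropriate, yielding $q_{ij}$ or $q_{ij}^{-1}$. The relation $\partial_i x_i = q^2 x_i \partial_i + (q^2-1)$ is just the defining relation inside the $i$-th factor $\Dqc_i$, since the braiding is trivial on the $i$-th factor with itself.

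To see that these relations present the algebra, I note that Lemma \ref{lem:dxn}(1) gives a PBW basis $\{x_i^a \partial_i^b\}$ for each $\Dqc_i$, so the underlying vector space of $\bigotimes_{i=1}^n \Dqc_i$ has basis consisting of ordered monomials $\prod_i x_i^{a_i} \partial_i^{b_i}$. The relations I have just listed (together with the relation in each individual $\Dqc_i$) are visibly sufficient to rewrite an arbitrary word in the $x_i, \partial_i$ in this ordered form, so the free algebra modulo these relations surjects onto $\Dqn$; counting ordered monomials shows that this surjection is an isomorphism. (Alternatively, Bergman's diamond lemma applies immediately, as the ambiguities to check are exactly the cubic overlaps among generators in three distinct indices and the overlaps with the quadratic relation $\partial_i x_i = q^2 x_i \partial_i + (q^2 - 1)$, and these resolve by the $q$-commutation of $\alpha_i$ with $x_j, \partial_j$.)

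Finally, the consequence about the Euler operator $\alpha_i = 1 + x_i \partial_i$ commuting with $x_j$ and $\partial_j$ for $i \neq j$ follows by a short computation from the relations above: for $i < j$, one has $x_j (x_i \partial_i) = q_{ij}(x_i x_j) \partial_i = q_{ij} x_i (q_{ij}^{-1} \partial_i x_j) = (x_i \partial_i) x_j$, and similarly for the remaining cases. The main obstacle, if any, is purely notational, namely keeping track of whether each generator contributes a factor of $q_{ij}$ or $q_{ij}^{-1}$; there is no substantive difficulty.
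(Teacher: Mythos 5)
Your verification is correct and is exactly the computation the paper has in mind — the paper simply declares the lemma "straightforward to verify" and omits it, and your unwinding of the braided multiplication $(a_1\otimes b_1)(a_2\otimes b_2)=q^{\langle \mathbf r,\mathbf s\rangle}(a_1a_2)\otimes(b_1b_2)$ on generators, together with the PBW/rewriting argument that the listed relations give a presentation, supplies the missing details accurately (in particular the signs of the exponents $q_{ij}^{\pm 1}$ all check out against $q_{ij}=q^{\langle\deg(j),\deg(i)\rangle}$). No gaps.
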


\begin{rmk} We make the following remarks:
\begin{enumerate}
\item Setting $q=1$ in the definition of $\Dqn$, we obtain the algebra $\O(T^*\C^n)$ functions on the cotangent bundle of $\C^n$, whereas renormalizing each $\partial_i$ to $\frac{\partial_i}{q^2 -1}$ and setting $q=1$, we obtain the Weyl algebra $\D(\mathbb{C}^n)$ of affine $n$-space. Thus, $\Dqc$ is a $q$-deformation of both the algebra $\O(T^*\C^n)$ and the Weyl algebra $\D(\mathbb{C}^n)$.
 \item The hypertoric enveloping algebra corresponding to the subtorus $K$ of $T$ is the subalgebra of $K$-invariant differential operators on $\mathbb{C}^n$. This algebra was introduced by \cite{MvdB}, and central reductions of this algebra are considered in \cite{BLPW:catO, MvdB}. The algebra $\Dqn^K$ is a quantum version of this algebra; hence the terminology `hypertoric quantum group'.
 \item Observe that $\Dqn$ depends on the inclusion $\phi: K \hookrightarrow T$ and should properly be denoted $\D_q(\C^n; \phi)$. For simplicity, however, we write just $\Dqn$. 
 \item  The fact that $\Dqn$ is an algebra object in $\Rep_q(K)$ reflects the construction of hypertoric varieties (reviewed in Section \ref{subsec:pre:hypertoric}) in that one regards the algebra of functions $\O(T^*\C^n)$ as a representation of the subtorus  $K \subseteq T$.
\end{enumerate}
\end{rmk}

\renewcommand{\Dq}{{\mathcal D}_q}
\subsection{Examples from quivers}\label{subsec:Dqn:quiver}

Let $Q = (\mathcal V, \mathcal E)$ be a finite, loop-free quiver and write  $V = \# \mathcal V$, $E = \# \mathcal E$, and $C = \# \pi_0(Q)$ for the number of vertices, edges, and path components of $Q$. For every edge $e \in \mathcal E$, attach a degree vector $\deg(e) \in \Z^{\mathcal V}$: \[ \deg(e)_v= \left\{ 
  \begin{array}{l l}
    -1 & \quad \text{if $v$ is the source of $e$}\\
    1 & \quad \text{if $v$ is the target of $e$}\\
    0 & \quad \text{otherwise}
  \end{array} \right.\] 
Let $K$ be the quotient of $(\C^\times)^{\mathcal V}$ by the subtorus of elements $g \in (\C^\times)^{\mathcal V}$ such that $g_v =g_w$ if $v$ and $w$ belong to the same path component of $Q$. Thus, $K$ is a torus of rank $V-C$. The pullback of the usual dot product on $\Z^\mathcal V$ under the natural map $X^*(K) \rightarrow X^*((\C^\times)^\mathcal V) = \Z^\mathcal V$ defines a bilinear from on $X^*(K)$; this is the bilinear form we fix in the definition of $\Rep_q(K)$. Each element $\deg(e) \in \Z^\mathcal V$ can be pulled back uniquely to $X^*(K)$. 

There is a well-defined injective homomorphism $\phi: K \rightarrow (\C^\times)^{\mathcal E}$ given by $\phi(k)_e = \prod_{v \in \mathcal V} ({\tilde k}_v)^{\deg(e)_v}$ for any lift $\tilde k \in (\C^\times)^{\mathcal V}$ of $k \in K$. Fix a total ordering on the set $\mathcal E$. To each edge $e \in \mathcal E$, we associate a copy $\Dq(e)$ of the algebra $\Dqc$. 

\begin{definition}\label{def:Dqquiver} The algebra $\Dq(Q)$ of $q$-difference operators associated to $Q$ is defined as the tensor product of the algebras $\Dq(e)$ taken in the category  $\Rep_q(K)$:  $$\Dq(Q) : = \bigotimes_{e \in \mathcal E} \Dq(e).$$ \end{definition}

The algebra of Definition \ref{def:Dqquiver} is isomorphic to the algebra  $\Dq(Q)$ defined in \cite{DJ}, where the dimension vector is equal to one at all vertices. Elements of $\Dq(e)$ commute with elements of $\Dq(f)$ whenever the edges $e$ and $f$ do not share a common vertex.

\begin{example}\label{ex:anquiver} Let $Q$ be the affine $A_{n-1}$ quiver. Thus, $Q$ is a cyclic quiver with $n$ vertices and $n$ edges, each labeled by the set $\{1, \dots, n\}$, where edge $i$ has source vertex $i$ and target vertex $i+1 \mod n$. The algebra $\Dq(Q)$ is generated by elements $x_1,\ldots, x_n$, $\partial_1,\ldots, \partial_n$ subject to the following relations (indices taken modulo $n$):
$$x_{i+1} x_i = q\inv x_i x_{i+1}, \qquad \partial_{i+1} \partial_i = q\inv \partial_i \partial_{i+1},$$
$$\partial_{i+1} x_i = q x_i \partial_{i+1}, \qquad x_{i+1} \partial_i = q \partial_i x_{i+1}, \qquad \partial_i x_i = \qq x_i \partial_i + (\qq-1),$$ and all other pairs of generators commute.  \end{example}

\subsection{The center of $\Dqn$}\label{subsec:Dqn:center}
\renewcommand{\Dq}{\D}

Henceforth, we fix $q\in\C^\times$ to be a primitive $\ell$-th root of unity in $\C$, where $\ell >1$ is odd. Under this hypothesis, the algebra $\Dqn$ develops a large center:

\begin{prop}\label{prop:center} The center $\Zln$ of $\Dqn$ is the subalgebra generated by $x_i^\ell$ and  $\partial_i^\ell$, for $i = 1, \dots, n$ and is isomorphic to a polynomial algebra: 
$$\Zln = \C[x_1^\ell, \dots, x_n^\ell, \partial_1^\ell, \dots, \partial_n^\ell].$$
\end{prop}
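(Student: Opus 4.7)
The plan is to proceed in three stages. First, verify that $x_i^\ell$ and $\partial_i^\ell$ are central and generate a polynomial subalgebra. The cross-commutation scalars in Lemma \ref{lem:Dqnrelations} are integer powers of $q$, so they become $1$ upon raising to the $\ell$-th power, showing that $x_i^\ell$ and $\partial_i^\ell$ commute with $x_j, \partial_j$ for $j \neq i$. The self-commutation $\partial_i x_i = q^2 x_i \partial_i + (q^2 - 1)$ iterates via Lemma \ref{lem:dxn}(3) to $\partial_i x_i^\ell = q^{2\ell} x_i^\ell \partial_i + (q^{2\ell} - 1) x_i^{\ell - 1} = x_i^\ell \partial_i$, with the symmetric identity for $\partial_i^\ell$. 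Algebraic independence follows from the linear independence of PBW monomials in $\Dqn$.

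Second, I would establish $Z_n \subseteq \C[x_i^\ell, \partial_i^\ell]$ using the $\Z^n$-grading on $\Dqn$ with $\deg(x_i) = e_i$, $\deg(\partial_i) = -e_i$, under which $Z_n$ decomposes into graded components. A key observation is that the Euler operator $\alpha_k = 1 + x_k\partial_k$ is $K$-invariant and therefore braids trivially with all other generators; combined with $\alpha_k x_k = q^2 x_k \alpha_k$ (from Lemma \ref{lem:dxn}(3)), this yields $\alpha_k M = q^{2 c_k} M \alpha_k$ for any monomial $M$ of weight $\mathbf{c}$. So if $z \neq 0$ is central of weight $\mathbf{c}$, centrality gives $(q^{2 c_k} - 1) z \alpha_k = 0$; since $\Dqn$ is a domain (its associated graded is a quantum polynomial ring, hence an iterated Ore extension) and $q^2$ is a primitive $\ell$-th root of unity (using $\gcd(2,\ell) = 1$), we conclude $\ell \mid c_k$ for all $k$.

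Third, for central $z$ of weight $\ell \mathbf{d}$, I would factor it as $z = \xi \cdot Q(\alpha_1, \dots, \alpha_n)$, where $\xi = \prod_i x_i^{\ell d_i^+} \partial_i^{\ell d_i^-}$ (with $d_i = d_i^+ - d_i^-$ the positive/negative decomposition) is a central monomial and $Q$ is a polynomial. This factorization uses the identity $\prod_{j=0}^{k-1}(\alpha - q^{2j}) = q^{k(k-1)} x^k \partial^k$ in $\Dqc$, provable by induction on $k$ via Lemma \ref{lem:dxn}(3), to express each $x_i^{e_i} \partial_i^{e_i}$ (with $e_i = \min(a_i, b_i)$) as a polynomial in $\alpha_i$; the weight-zero character of the $\alpha_i$'s and the centrality of $\xi$ make all relevant braidings trivial, permitting a clean rearrangement into the product form. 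The domain property plus centrality of $z$ and $\xi$ force $Q$ to be central; combined with $\alpha_k^j x_k = q^{2j} x_k \alpha_k^j$, this requires $Q(\alpha_1, \dots, q^2 \alpha_k, \dots, \alpha_n) = Q(\alpha_1, \dots, \alpha_n)$, which forces $Q \in \C[\alpha_1^\ell, \dots, \alpha_n^\ell]$. Specializing the key identity to $k = \ell$ with $q^\ell = 1$ gives $\alpha^\ell = 1 + x^\ell \partial^\ell$, so $Q$ and hence $z$ lies in $\C[x_i^\ell, \partial_i^\ell]$. The main technical obstacle is the careful bookkeeping in the factorization step, but this proceeds cleanly because the braidings that arise all involve either $K$-weight-zero elements ($\alpha_i$) or central elements ($\xi$), and are thus trivial.
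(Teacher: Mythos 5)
Your proof is correct, but it takes a genuinely different route from the paper's. The paper works entirely in PBW coordinates: for $n=1$ it writes $z=\sum z_{m,k}x^m\partial^k$, compares the coefficients of $xz$ and $zx$ (and of $\partial z$ and $z\partial$) to obtain a recursion among the $z_{m,k}$, and deduces $z_{m,k}=0$ unless $\ell\mid m$ and $\ell\mid k$; the general $n$ is then reduced to $n=1$ via Lemma \ref{lem:Dqnrelations}. You instead exploit the $\Z^n$-grading and the Euler operators: homogeneity of the center, conjugation by $\alpha_k$ to force the weight of a central element to lie in $\ell\Z^n$, factorization of a homogeneous element as a central monomial times a polynomial in the $\alpha_i$ via the identity $\prod_{j=0}^{k-1}(\alpha-q^{2j})=q^{k(k-1)}x^k\partial^k$, and finally conjugation by $x_k$ to force that polynomial into $\C[\alpha_1^\ell,\dots,\alpha_n^\ell]=\C[1+x_i^\ell\partial_i^\ell]$. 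Your argument is uniform in $n$, is more structural, and proves the identity $\alpha^\ell=1+x^\ell\partial^\ell$ of Corollary \ref{cor:centeretc}(2) as a byproduct (the paper deduces that corollary \emph{from} the proposition, so there is no circularity either way). The trade-off is that you need two standard inputs the paper's computation avoids: that the center of a $\Z^n$-graded algebra generated by homogeneous elements is itself graded, and that $\Dqn$ is a domain (which does hold — it is an iterated Ore extension with bijective endomorphisms, and the paper asserts the absence of zero divisors for $\Dqn^\circ$ elsewhere — but is used essentially in your cancellation steps and should be stated as a lemma). With those two facts recorded, every step of your argument checks out.
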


\begin{proof} We prove the $n=1$ case first. By Lemma \ref{lem:dxn}(3) and the defining relations of $\Dqc$, the algebra generated by $x^\ell$ and $\partial^\ell$ is contained in the center of $\Dqc$ and is isomorphic to the polynomial algebra $\C[x^\ell, \partial^\ell]$. Conversely, let $z \in Z$. By Lemma \ref{lem:dxn}(1), we can write $z = \sum_{m,k} z_{m,k}x^m \partial^k,$ where the sum ranges over nonnegative integers $m,k$ and $z_{m,k}$ is a scalar. We have that 
\begin{align*}
x z &= \sum_{m,k} z_{m,k} x^{m+1} \partial^k = \sum_{m \geq 1;k\geq 0} z_{m-1,k} x^{m} \partial^k, \qquad \text{and} \\
z x &= \sum_{m,k} q^{2k}z_{m,k} x^{m+1} \partial^k + (q^{2k} -1 ) z_{m,k} x^{m} \partial^{k-1} \\ &= \sum_{m\geq 1; k\geq 0} q^{2k}z_{m-1,k} x^{m} \partial^k + \sum_{m,k}(q^{2(k+1)} -1 ) z_{m,k+1} x^{m} \partial^{k}\end{align*}
Since $x z = z x$, we deduce that\footnote{One also deduces that $(q^{2(k+1)}-1)z_{0, k+1} = 0$ for any $k \geq 0$, but we do not need this.} $z_{m-1, k} = q^{2k} z_{m-1,k} + (q^{2(k+1)} - 1) z_{m, k+1}$
for $m \geq 1$ and $k \geq 0$. From this equation, we obtain
$$(q^{2k} -1) z_{m,k} = (-1)^i (q^{2(k+i)} -1) z_{m+i, k+i}$$
for $m,k,i \geq 0$. Suppose that $\ell$ does not divide $k$, so that $q^{2k} -1 \neq 0$. There is a unique $i \in \{1, \dots, \ell-1\}$ such that $\ell$ divides $k+i$. For this choice of $i$, we have 
$$z_{m,k} = \frac{(-1)^i (q^{2(k+i)} -1) z_{m+i, k+i}}{q^{2k} -1} =  \frac{(-1)^i (0) z_{m+i, k+i}}{q^{2k} -1} = 0.$$
An analogous argument using the identity $\partial z = z\partial$ shows that $z_{m,k} = 0$ whenever $\ell$ does not divide $m$. Thus, $z \in \C[x^\ell, \partial^\ell]$. The general case follows from Lemma \ref{lem:Dqnrelations}, which shows that, for $i \neq j$, the elements  $x_i^\ell$ and $\partial_i^\ell$ commute with each of $x_j$ and $\partial_j$. \end{proof}

\begin{cor}\label{cor:centeretc} We have:
\begin{enumerate}
\item The algebra $\Dqn$ is a free module of rank $\ell^{2n}$ over its center. 
\item\label{cor:alphaell} For $i =1, \dots, n$, the identity  $\alpha_i^\ell = 1 + x_i^\ell \partial_i^\ell$ holds in $\Dqn$.
\end{enumerate}
\end{cor}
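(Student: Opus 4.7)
The two claims are essentially independent, and I plan to handle them separately. For (1), my strategy is to exploit the PBW basis of $\Dqn$ obtained by iterating Lemma \ref{lem:dxn}(1) via the commutation relations of Lemma \ref{lem:Dqnrelations}: the ordered monomials $x_1^{a_1} \cdots x_n^{a_n} \partial_1^{b_1} \cdots \partial_n^{b_n}$ with $a_i, b_j \geq 0$ form a $\C$-basis. Writing each exponent as $a_i = \ell a_i' + r_i$ and $b_j = \ell b_j' + s_j$ with $0 \leq r_i, s_j < \ell$, and using Proposition \ref{prop:center} so that the factors $x_i^\ell$ and $\partial_j^\ell$ are central, each PBW monomial can be rewritten as a $\Zln$-multiple of an ordered monomial whose exponents lie in $\{0, 1, \ldots, \ell-1\}$. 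The $\C$-linear independence of the original PBW basis then forces these $\ell^{2n}$ residue monomials to form a free $\Zln$-basis of $\Dqn$.

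For (2), the identity involves only the subalgebra generated by $x_i$ and $\partial_i$, which by Lemma \ref{lem:Dqnrelations} is a copy of $\Dqc$; so I reduce at once to the case $n=1$. The key step I plan to establish by induction on $k$ is the factorization
\[
x^k \partial^k \;=\; \prod_{i=0}^{k-1}\bigl(q^{-2i}\alpha - 1\bigr),
\]
using $x\partial = \alpha - 1$ and the $q$-commutation $x\alpha = q^{-2}\alpha x$ from Lemma \ref{lem:dxn}(3) to push an $x$ past the product in the inductive step. Specializing to $k = \ell$ and observing that since $\ell$ is odd, $q^{-2}$ is a primitive $\ell$-th root of unity, the scalars $q^{-2i}$ for $0 \leq i \leq \ell - 1$ exhaust the full set of $\ell$-th roots of unity. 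The scalar identity
\[
\prod_{\zeta^\ell = 1}(\zeta z - 1) \;=\; z^\ell - 1,
\]
valid for odd $\ell$ (since the product of all $\ell$-th roots of unity equals $1$ in that case), then yields $x^\ell\partial^\ell = \alpha^\ell - 1$, which is the desired identity.

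The main obstacle is pinpointing the correct factorization formula for $x^k \partial^k$ in terms of $\alpha$; once this is in hand, both the inductive verification and the passage to $k = \ell$ are essentially formal. The odd parity of $\ell$ enters at exactly one place, namely to guarantee that $\prod_{\zeta^\ell=1}(\zeta z - 1)$ has the right sign and that $q^{-2}$ is itself a primitive $\ell$-th root of unity.
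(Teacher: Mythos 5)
Your proof is correct. For part (1) you take essentially the same route as the paper, which simply observes that the ordered monomials $x_1^{m_1}\cdots x_n^{m_n}\partial_1^{k_1}\cdots\partial_n^{k_n}$ with $0\leq m_i,k_i\leq \ell-1$ form a $\Zln$-basis; your rewriting of each PBW exponent as $\ell a_i'+r_i$ is just this observation made explicit. For part (2), however, you take a genuinely different path. The paper also reduces to $n=1$, but then proves by induction the expansion $\alpha^m = 1 + \sum_{k=1}^{m-1}c_k^{(m)}x^k\partial^k + q^{m(m-1)}x^m\partial^m$ with \emph{undetermined} scalars $c_k^{(m)}$, specializes to $m=\ell$, and argues that since $\alpha^\ell - x^\ell\partial^\ell - 1$ is central while no nonzero combination of the $x^k\partial^k$ with $1\leq k\leq\ell-1$ can be central (by Proposition \ref{prop:center}), all the $c_k^{(\ell)}$ must vanish. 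Your argument instead establishes the closed-form factorization $x^k\partial^k=\prod_{i=0}^{k-1}(q^{-2i}\alpha-1)$ (which I have checked: the inductive step via $x\alpha=q^{-2}\alpha x$ and $(q^{-2i}\alpha-1)\partial=\partial(q^{-2(i+1)}\alpha-1)$ goes through, and the factors commute since they are polynomials in $\alpha$) and then evaluates at $k=\ell$ using the identity $\prod_{\zeta^\ell=1}(\zeta z-1)=z^\ell-1$ for odd $\ell$. The paper's route is softer, leaning on the center computation to avoid identifying the coefficients; yours is more explicit and self-contained, gives the identity for all $k$ at once, makes completely transparent where the hypotheses on $q$ and the parity of $\ell$ enter, and in fact recovers the ``quantum binomial'' coefficients $c_k^{(m)}$ that the paper only mentions in a subsequent remark. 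Both are valid proofs.
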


\begin{proof} For the first statement, it is straightforward to see that a basis for $\Dqn$ as a $\Zln$-module is given by the ordered monomials $x_1^{m_1} \cdots x_n^{m_n} \partial_1^{k_1} \cdots \partial_n^{k_n}$ for $0 \leq m_i, k_i \leq \ell-1$. For the second assertion, it is enough to show that the identity holds in the $n=1$ case. To this end, a simple induction argument verifies that, for any $m \geq1$:
$$\alpha^m = 1  + \sum_{k=1}^{m-1} c_k^{(m)} x^k \partial^k + q^{m(m-1)} x^m \partial^m,$$
where $c_k^{(m)}$ are certain scalars. Specializing to $m = \ell$ and rearranging terms, we obtain 
$$\sum_{k=1}^{\ell-1} c_k^{(\ell)} x^k \partial^k = \alpha^\ell - x^\ell \partial^\ell - 1$$
The right-hand side is central, so it follows that the left-hand side is also central. By Proposition \ref{prop:center}, we conclude that $c_k^{(\ell)} = 0$ for $k = 1, \dots, \ell-1$. Therefore, $\alpha^\ell = 1 +  x^\ell \partial^\ell.$
\end{proof}

\begin{rmk} We make the following remarks:
\begin{enumerate}
 \item As the center of a noncommutative algebra, the algebra $\Zln$ acquires a Poisson structure given by $\{\partial_i^\ell ,x_j^\ell\}$ $=$ $\delta_i^j \ell \left(1+ x_i^\ell\partial_i^\ell\right).$ Thus, although $\Zln$ and $\C[\{x_i^\ell, \partial_i^\ell\}]$ are isomorphic  as algebras, the Poisson structures differ. One can also compare $\Zln$ to the algebra $\O((T^*\mathbb{A}^N)^{(1)})$ of \cite{BMR, Stadnik}, and others.

\item The scalars $c_k^{(n)}$ appearing in the proof of Corollary \ref{cor:centeretc}.\ref{cor:alphaell} are quantum binomial coefficients, of sorts, as they satisfy the following recursive formula: $c_k^{(n)}$ $=$ $q^{2k} c_k^{(n-1)} + q^{2(k-1)} c_{k-1}^{(n-1)}$.
\end{enumerate}
\end{rmk}

\subsection{The Azumaya locus}\label{subsec:Dqn:azumaya}

In this section, $q$ continues to be a fixed $\ell$-th root of unity, where $\ell>1$ is odd. By the main result of the previous section, $\Dqn$ is a finite-rank module over its center $\Zln$, and thus defines a coherent sheaf of algebras, denoted $\widetilde{\Dqn}$, on $\Spec(\Zln)$. In this section, we compute the Azumaya locus of this sheaf. 

\begin{definition} Let $\Zln^\circ$ be the localization of $\Zln$ along the multiplicative set generated by $\{ \alpha_i^\ell \ | \ i = 1, \dots, n\}$. Define $\Dqn^\circ$ as $\Dqn \otimes_{\Zln} \Zln^\circ$. 
\end{definition}

The $q$-commutativity of the $\alpha_i$ implies that the multiplicative set generated by the $\alpha_i$ satisfies the Ore condition, and the localization $\Dqc[\{\alpha_i\inv \}]$ is isomorphic to $\Dqc^\circ$. The restriction of $\widetilde{\Dqn}$ to $\Spec(\Zln^\circ)$ is the sheaf of algebras $\widetilde{\Dqc^\circ}$ associated to the $\Zln^\circ$-algebra $\Dqn^\circ$.

%We identify $\Spec(\Zln)$ with the cotangent bundle $T^*\C^n$ via Proposition \ref{prop:center}.  Using Propositions \ref{prop:center} and \ref{prop:betaeqns}, 

\begin{theorem}\label{thm:Dqnazumaya} Suppose $q$ is a primitive $\ell$-th root of unity where $\ell >1$ is odd.  The Azumaya locus of $\widetilde{\Dqn}$ is $\Spec(\Zln^\circ)$. In particular, $\Dqn^\circ$ is an Azumaya algebra over $\Zln^\circ$. \end{theorem}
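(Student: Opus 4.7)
The plan is to combine the local (fiberwise) characterization of Azumaya algebras with the braided tensor product structure of $\Dqn$. Since $\Dqn$ is a free $\Zln$-module of rank $\ell^{2n}$ by Corollary~\ref{cor:centeretc}, and $\Spec(\Zln)$ is connected affine space, the fiber criterion recalled in Section~\ref{subsec:pre:azumaya} reduces the problem to analyzing the fiber
\[
A_p := \Dqn/(x_i^\ell - a_i,\; \partial_i^\ell - b_i \;:\; i = 1, \ldots, n)
\]
at each closed point $p = (\mathbf{a}, \mathbf{b}) \in \Spec(\Zln)$; I will show $A_p$ is central simple precisely when $1 + a_i b_i \neq 0$ for all $i$, i.e.\ when $p \in \Spec(\Zln^\circ)$.

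To reduce to the $n=1$ case I would exploit the braided structure. Although the relations $x_i^\ell - a_i$ and $\partial_i^\ell - b_i$ carry nonzero $\Z^d$-degrees $\pm\ell\deg(i)$, these degrees vanish modulo $\ell$, so the relations become invariant once $\Dqn$ is viewed as an algebra in $\Rep_q(\Gamma)$. Iterated application of Lemma~\ref{lem:quot-tensor} in $\Rep_q(\Gamma)$ then yields
\[
A_p \;\simeq\; \bigotimes_{i=1}^n \bigl(\Dqc_i/(x_i^\ell - a_i,\; \partial_i^\ell - b_i)\bigr)
\]
as a braided tensor product in $\Rep_q(\Gamma)$. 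Provided each $\ell^2$-dimensional factor is identified, as a $(\Z/\ell)^d$-graded algebra, with the matrix algebra $\Mat_\ell(\C)$, Corollary~\ref{cor:deformmatrix} identifies $A_p$ with $\Mat_{\ell^n}(\C)$, which is central simple.

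For the $n=1$ case with $1 + ab \neq 0$, I would construct an explicit irreducible $\ell$-dimensional representation $V$ of $\Dqc/(x^\ell - a, \partial^\ell - b)$. Choose $\lambda \in \C^\times$ with $\lambda^\ell = 1 + ab$, and on a basis $e_0, \ldots, e_{\ell-1}$ prescribe $\alpha e_i = \lambda q^{2i} e_i$; let $x$ act as a rescaled cyclic shift ($x e_i = e_{i+1}$ for $i < \ell-1$ and $x e_{\ell-1} = a\,e_0$), and take $\partial$ to be the lowering operator forced by $x\partial = \alpha - 1$. Since the $\alpha$-eigenvalues are distinct, any nonzero submodule contains some $e_i$; since $x$ acts invertibly by a cyclic shift when $a \neq 0$, such a submodule is then all of $V$. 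When $a = 0$ one interchanges the roles of $x$ and $\partial$, permissible because $1 + ab = 1 \neq 0$ is automatic. Jacobson density together with the dimension count $\dim A_{(a,b)} = \ell^2 = \dim \End(V)$ then forces $A_{(a,b)} \cong \Mat_\ell(\C)$. Conversely, if $1 + a_i b_i = 0$ for some $i$, then in $A_p$ the Euler operator $\alpha_i = 1 + x_i\partial_i$ is nonzero by the PBW basis of Lemma~\ref{lem:dxn} but satisfies $\alpha_i^\ell = 0$ by Corollary~\ref{cor:centeretc}(2); the $q$-commutation relations of Lemma~\ref{lem:Dqnrelations} ensure that $A_p \alpha_i$ is a two-sided ideal, which is nonzero and proper (finite-dimensionality plus nilpotence of $\alpha_i$ rules out $\alpha_i$ being a unit), so $A_p$ is not simple.

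The principal obstacle will be the $n=1$ construction, specifically the uniform verification of irreducibility of $V$ across the parameter regime, with attention to degenerate subcases ($a = 0$, or $\lambda = 1$ when $ab = 0$) where the formulas for $\partial e_0$ or the boundary of the cyclic shift must be adjusted. The conceptual core is the factor-by-factor reduction via Lemma~\ref{lem:quot-tensor}: relations which are not invariant in $\Rep_q(K)$ become invariant in $\Rep_q(\Gamma)$ after reducing the grading modulo $\ell$, so the fiber decomposes cleanly as a braided tensor product of the individual $n=1$ fibers, to which Corollary~\ref{cor:deformmatrix} then applies.
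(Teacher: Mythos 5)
Your proposal is correct and follows essentially the same route as the paper: the forward inclusion via the nonzero two-sided ideal generated by the ($q$-commuting, nilpotent) $\alpha_i$, the reduction to $n=1$ by viewing the fiber in $\Rep_q(\Gamma)$ and applying Lemma~\ref{lem:quot-tensor} together with Corollary~\ref{cor:deformmatrix}, and an explicit $\ell$-dimensional simple module in the $n=1$ case. The only cosmetic difference is that the paper realizes that module as the cyclic quotient $\D_\lambda/\D_\lambda(x-b)$ for an $\ell$-th root $b$ of $c$ and exhibits matrix units directly (using $x^{j-i}e_{ii}$ when $c\neq 0$ and the basis $\partial^{i-1}$ when $c=0$), which sidesteps the well-definedness and degenerate-case checks you correctly flag as the main remaining work in your prescription of the module by formulas.
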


We introduce some notation before the proof of the theorem. Let $\lambda \in \Spec(\Zln)$ be a closed point. Under the isomorphism $\Spec(\Zln) $ $\simeq$ $ T^*\C^n$, we write $\lambda$ in coordinates as $\lambda = (\lambda_i, \lambda_{i^\vee})_{i=1}^n$ for $\lambda_i, \lambda_{i^\vee} \in \C$. We identify the closed points of $\Spec(\Zln^\circ)$ with the subset 
$$\{(\lambda_i, \lambda_{i^\vee}) \in T^*\C^n \ | \ 1 +  \lambda_i  \lambda_{i^\vee} \neq 0 \ \text{for all $i$}\} \subseteq T^*\C.$$

\begin{proof} Suppose that $\lambda \in \Spec(\Zln)$ is a closed point in the Azumaya locus. Thus, the fiber $\Dqn_\lambda$ of $\widetilde{\Dqn}$ over $\lambda$ is a matrix algebra. The $q$-commutativity of the $\alpha_i$ (Lemma \ref{lem:dxn}) implies that each $\alpha_i$ generates a nonzero 2-sided ideal in $\Dqn_\lambda$. As a matrix algebra, $\Dqn_\lambda$ has no nonzero proper ideals. We conclude that each $\alpha_i$ (and hence each $\alpha_i^\ell$) is invertible. Thus, $\lambda$ belongs to $\Spec(\Zln^\circ)$.

Conversely, to show that the Azumaya locus contains $\Spec(\Zln^\circ)$, we first consider the case where $n=1$. Set $\Dq:=\Dqc,$ and $Z:=Z_1 = Z(\Dqc)$. Fix a closed point $\lambda \in \Spec(Z^\circ)$. We may write $\lambda = (c,w)$ for complex numbers $c$ and $w$, with $1 + cw \neq 0$. The fiber of $\widetilde{\Dq}$ at $\lambda$ is $$\Dql = \Dq/ \Dq (x^\ell - c, \partial^\ell -w).$$
When there is no danger of confusion, we use the same notation for elements in $\Dq$ and their images in $\Dql$. Pick an $\ell$-th root of unity $b$ for $c$ and write $\nu = (b,w)$. The quotient $M_{\nu}:=\D_\lambda \slash\D_\lambda (x-b)$ is a left $\D_\lambda$-module of dimension $\ell$. 

We argue that the action map $\rho: \D_\lambda \rightarrow \End(M_\nu)$ is an isomorphism. Since the source and target of $\rho$ have the same dimension, it suffices to show that $\rho$ is surjective. To this end, we will choose a convenient basis for $M_\nu$ and define a collection $\{e_{ij}\}$ of $\ell^2$ elements of $\Dql$ whose images in $\End(M_\nu) \simeq \text{Mat}(\ell)$ are the elementary matrices in our chosen basis.  

Let $\gamma$ be a $\ell$-th root of $1+ cw$. Since $\rho(\alpha^\ell) = 1 + cw$ is a scalar, the endomorphism $\rho(\alpha)$ is diagonalizable. Therefore,  we may choose a basis of $M_\nu$ of eigenvectors $v_i$, with distinct eigenvalues, $q^{2i} \gamma$ for $1 \leq i \leq \ell$. A projector $e_{ii}$ onto the span of $v_i$ is given by:
$$e_{ii} = \prod_{r=1, r \neq i}^k \frac{\alpha - q^{2r} \gamma}{q^{2i}\gamma - q^{2r} \gamma}.$$ It is easy to check that $e_{ii} \cdot v_r = v_r$ if $r = i$ and 0 otherwise. Lemma \ref{lem:dxn} implies that, for $1 \leq i \leq \ell$: \begin{equation} \label{eq:xvi} \text{The operator $\alpha$ acts on $xv_i$ as the scalar $q^{2(i+1)}\gamma$ and on $\partial v_i$ as the scalar $q^{2(i-1)}\gamma$.} \end{equation} In general, the vectors $xv_i$ and $\partial v_i$ may be zero, and hence may fail to be eigenvectors for $\alpha$.

\begin{itemize}
 \item If $c \neq 0$, then the element $x$ is invertible in $\Dql$ with inverse $c\inv x^{\ell-1}$. Consequently, the element $x^{i} v_1 \in M_\nu$ is nonzero for any $i$ and defines an eigenvector for $\alpha$ with eigenvalue $q^{2(i+1)}\gamma$.  Without loss of generality, assume $v_i = x^{i-1} v_1$ for $1\leq i \leq \ell$. The element $e_{ij} := x^{j-i}e_{ii}$ sends $v_i$ to $v_j$ and all other basis vectors to zero. Thus, the image of set $\{e_{ij}\}$ in $\End(M_\nu)$ can be identified with the elementary matrices in the basis $\{v_i\}$. 

 \item Otherwise, if $c = 0$, then  $1 + cw = 1$ and we may take $\gamma = 1$. Identities in Lemma \ref{lem:dxn} imply that one can take $v_i = \partial^{i-1}$. The image of each of the following elements of $\Dql$ can be identified with the corresponding elementary matrix, and they generate $\End(M_\nu)$ as an algebra:
$$ e_{i+1,i} = \partial e_{ii}, \textrm{, for $i = 1, \dots, \ell-1$},\qquad  e_{i-1,i} = (q^{2(1-i)} -1)\inv x e_{ii}\textrm{, for $i = 2, \dots, \ell$}.$$
 \end{itemize}

This completes the proof of the $n=1$ case. For the general case, we introduce the following notation. For $\lambda  = (\lambda_i, \lambda_{i^\vee})_i \in \Spec(\Zln)$, set  $$\Dqn_{\lambda} = \Dqn / \Dqn (\{ x_i^\ell - \lambda_i, \partial^\ell_{i} - \lambda_{i^\vee}\ | \ i = 1, \dots, n \}), \ \text{and}$$ 
$$\Dq(\C)_{i,\lambda} =  \Dqc/ (x_i^\ell - \lambda_i, \partial^\ell_{i} - \lambda_{i^\vee}) \ \text{for $i = 1, \dots, n$}.$$ 
The action of $K$ on each of $\Dqc_i$ and $\Dqn$ descends to an action of $\Gamma$ on $\Dq(\C)_{i, \lambda}$ and $\Dqn_\lambda$, where $\Gamma$ is the finite group defined in Section \ref{subsec:pre:braideddeform}. By Lemma \ref{lem:quot-tensor} there is an isomorphism of algebras in $\Rep_q(\Gamma)$: $$\Dqn_\lambda \simeq \bigotimes_{i=1}^n  \Dq(\C)_{i,\lambda}.$$

Suppose $\lambda \in \Spec(\Zln^\circ)$. Appealing to the proof of the $n=1$ case, there is an algebra isomorphism $\Dqc_{i,\lambda} \stackrel{\sim}{\longrightarrow} \Mat(\ell)$, for each $i$. Moreover, this isomorphism endows $\Mat(\ell)$ with a $(\Z/\ell\Z)^d$-grading (equivalently, a $\Gamma$-action) given by $\deg(\elm_{r,s})_j = (r-s) m_{ij}$. Here $\elm_{r,s}$ is the elementary matrix with the entry 1 in the $r$th row and $s$th column and all other entries equal to 0. Thus, we are in the setting of Section \ref{subsec:pre:deformmatrix} and obtain an isomorphism $\Dqn_{\lambda} \stackrel{\sim}{\longrightarrow} \Mat(\ell^n)$ from Corollary \ref{cor:deformmatrix}. \end{proof}

For notational convenience, we label the rows and columns of a matrix in $\Mat(\ell)$ starting with 0, so the elementary matrices $\elm_{r,s}$ of $\Mat(\ell)$ will be indexed by pairs $(r,s)$ of elements of $\Z/\ell\Z = \{ 0, 1, \dots, \ell-1\}$. Suppose we have an $\ell \times \ell$ matrix algebra $\Mat(\ell)_i$ over $\C$ for  $i=1, \ldots n$, each with elementary matrices denoted by $\elm_{r,s}^i$. We index the rows and columns of a matrix in $\Mat(\ell^n)= \bigotimes_{i=1}^n \Mat(\ell)_i$ each by the set $(\Z/\ell\Z)^n$. An element of this set will be denoted as $\mathbf r = (r_i)_{i=1}^n$ with $0 \leq r_i \leq \ell-1$. Under the  well-known isomorphism of algebras
\begin{equation}\label{eq:tensormatrix} \Mat(\ell)^{\otimes n} =\Mat(\ell)_{1} \otimes \Mat(\ell)_{2} \otimes \cdots\otimes \Mat(\ell)_{n} \stackrel{\sim}{\longrightarrow} \Mat(\ell^n), \end{equation} 
the elementary matrix $\elm_{\mathbf r, \mathbf s} \in \Mat(\ell^n)$ corresponds to the tensor product $\bigotimes_{i=1}^n \elm^i_{r_i, s_i}.$ Tracing through the proof of Theorem \ref{thm:Dqnazumaya}, we obtain the following: 
 
\begin{cor}\label{lem:matalpha} Let $\lambda$ be a closed point in $\Spec(\Zln^\circ)$ and fix $\gamma_i$ to be an $\ell$-th root of $1 + \lambda_i \lambda_{i^\vee}$ for every $i = 1, \dots, n$. Under the isomorphism $\Dqn_\lambda \stackrel{\sim}{\rightarrow} \Mat(\ell^n)$ in the proof of Theorem \ref{thm:Dqnazumaya},  the element $\alpha_i$ corresponds to the diagonal matrix whose $(\mathbf r, \mathbf r)$ entry is given by $\gamma_i q^{-2r_i}$. \end{cor}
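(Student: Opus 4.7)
The plan is to unpack the isomorphism $\Dqn_\lambda \stackrel{\sim}{\longrightarrow} \Mat(\ell^n)$ constructed in the proof of Theorem \ref{thm:Dqnazumaya} and directly track the image of each $\alpha_i$. First I would dispatch the case $n=1$ by reading off $\alpha$ in the chosen basis, and then extend to arbitrary $n$ using the braided tensor product decomposition of Lemma \ref{lem:quot-tensor} together with Corollary \ref{cor:deformmatrix}.

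For $n=1$, recall that in the proof of Theorem \ref{thm:Dqnazumaya} the module $M_\nu$ is equipped with a basis $\{v_i\}_{i=1}^{\ell}$ of $\alpha$-eigenvectors with distinct eigenvalues $q^{2i}\gamma$, and the isomorphism $\Dqc_\lambda \stackrel{\sim}{\longrightarrow} \End(M_\nu) \simeq \Mat(\ell)$ is the action map in this basis. Since $q^{2\ell}=1$, relabelling the basis by $r \equiv -i \pmod{\ell}$ so that the index runs over $\{0,1,\dots,\ell-1\}$ converts the eigenvalue $q^{2i}\gamma$ into $\gamma q^{-2r}$. Hence, with the corresponding labelling of elementary matrices $\elm_{r,s} \in \Mat(\ell)$, the element $\alpha$ corresponds to the diagonal matrix with $(r,r)$ entry $\gamma q^{-2r}$.

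For general $n$, by Lemma \ref{lem:Dqnrelations} the element $\alpha_i$ commutes with $x_j$ and $\partial_j$ for every $j \neq i$. Under the tensor factorisation $\Dqn_\lambda \simeq \bigotimes_{i=1}^n \Dqc_{i,\lambda}$ of Lemma \ref{lem:quot-tensor}, the element $\alpha_i$ therefore sits in the $i$-th slot as $1 \otimes \cdots \otimes \alpha_i \otimes \cdots \otimes 1$. Applying the $n=1$ analysis to the $i$-th factor, $\alpha_i$ becomes, in $\bigotimes_{i=1}^n \Mat(\ell)_i$, the tensor product of the identity in all slots $j \neq i$ with $\mathrm{diag}(\gamma_i q^{-2r})_{r=0}^{\ell-1}$ in the $i$-th slot.

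The remaining step is to pass from the braided tensor product $\bigotimes_q \Mat(\ell)_i$ to $\Mat(\ell^n)$ via Corollary \ref{cor:deformmatrix} and the isomorphism \eqref{eq:tensormatrix}; this is the only delicate point. The potential obstacle is that the isomorphism $\phi$ in the proof of Proposition \ref{prop:deformmatrix} twists tensors by the factor $\Delta(\elm)$ which depends on $\deg(\elm)$. However, each $\alpha_i$ is of $K$-degree zero and is supported on the degree-zero elementary matrices $\elm_{r,r}$, for which $\deg(\elm_{r,r})=0$ and hence $\Delta(\elm_{r,r}) = I$. Consequently $\phi$ acts trivially on our tensor, and under \eqref{eq:tensormatrix} this tensor of diagonal matrices is exactly the diagonal matrix in $\Mat(\ell^n)$ whose $(\mathbf{r},\mathbf{r})$ entry is the product of the $r_j$-th diagonal entries of the factors, namely $\gamma_i q^{-2r_i} \cdot \prod_{j \neq i} 1 = \gamma_i q^{-2r_i}$, as required.
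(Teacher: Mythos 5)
Your argument is correct and is exactly the ``tracing through the proof of Theorem \ref{thm:Dqnazumaya}'' that the paper leaves implicit: the $n=1$ case is read off from the eigenbasis of $\alpha$ (with the index shift $r\equiv -i \bmod \ell$ reconciling the eigenvalues $q^{2i}\gamma$ with the stated entries $\gamma q^{-2r}$), and the passage to general $n$ correctly observes that the braiding twist $\Delta(\elm)$ in Proposition \ref{prop:deformmatrix} is trivial on degree-zero diagonal elementary matrices, so $\alpha_i$ passes untwisted through Corollary \ref{cor:deformmatrix} and the identification \eqref{eq:tensormatrix}. Nothing further is needed.
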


\subsection{An \'etale splitting}\label{subsec:Dqn:etale}
\newcommand{\ZLQ}{Z}

As before, we fix $q$ to be a primitive $\ell$-th root of unity, where $\ell >1$ is odd. As shown in the previous section, $\widetilde{\Dqn^\circ}$ is an Azumaya algebra over $\Spec(\Zln^\circ)$. It is not split, since the algebra $\Dqn^\circ$ has no nonzero zero divisors. In this section, we define an \'etale cover of $\Spec(\Zln^\circ)$ that splits the Azumaya algebra $\widetilde{\Dqn^\circ}$.  

Recall that the torus $T = (\C^\times)^n$ acts on $\Dqn$ by $t \cdot x_i = t_i x_i$ and $t \cdot \partial_i = t_i\inv \partial_i$. This action factors to give an action of $\Tell$ on $\Zln$. Observe that $\Spec(\Zln^\circ)$ can be identified, as a $\Tell$-space, with $(T^*\C^n)^{(\ell), \circ}$ from Section \ref{subsec:pre:twisthypertoric}. The following lemma is essentially same as Proposition \ref{prop:TCnmomentmap}.

\begin{lemma} There is a moment map $\mu: \Spec(\Zln^\circ) \rightarrow \Tell$ defined by $\mu(\lambda)_i = 1 + \lambda_i \lambda_{i^\vee}.$ \end{lemma}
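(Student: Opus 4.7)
The plan is to reduce the statement to Proposition~\ref{prop:TCnmomentmap} via the explicit identification of $\Spec(\Zln^\circ)$ with the Frobenius-twisted variety $(T^*\C^n)^{(\ell),\circ}$ introduced in Section~\ref{subsec:pre:twisthypertoric}.

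First, I would spell out the identification. By Proposition~\ref{prop:center}, $\Zln = \C[x_1^\ell,\dots,x_n^\ell,\partial_1^\ell,\dots,\partial_n^\ell]$, and by the definition of $\Zln^\circ$ (together with the identity $\alpha_i^\ell = 1 + x_i^\ell \partial_i^\ell$ from Corollary~\ref{cor:centeretc}), the localization $\Zln^\circ$ coincides with $\O((T^*\C^n)^{(\ell)})^\circ = \C[x_i^\ell,\partial_i^\ell][(1+x_i^\ell\partial_i^\ell)^{-1}]$ of Section~\ref{subsec:pre:twisthypertoric}. Hence there is a natural isomorphism of affine varieties $\Spec(\Zln^\circ) \simeq (T^*\C^n)^{(\ell),\circ}$. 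The $T$-action on $\Dqn$ scales $x_i$ by $t_i$ and $\partial_i$ by $t_i^{-1}$, so it factors through $\Tell$ on the center and matches the natural componentwise scaling action of $\Tell$ on $(T^*\C^n)^{(\ell),\circ}$ under this identification. In these coordinates, the proposed map is $\mu(\lambda)_i = 1 + \lambda_i \lambda_{i^\vee}$, which is exactly the map $\mu_{\Tell}$ obtained in Section~\ref{subsec:pre:twisthypertoric} by inserting $\ell$-th powers into $\mu_T$.

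Next, I would transport the symplectic structure on $(T^*\C^n)^{(\ell),\circ}$ to $\Spec(\Zln^\circ)$, namely
\[
\omega = \sum_{i=1}^n \frac{d(x_i^\ell) \wedge d(\partial_i^\ell)}{1 + x_i^\ell \partial_i^\ell}.
\]
The action of $\Tell$ preserves $\omega$ (it is the same scaling picture as in the untwisted setting, with $\ell$-th powers of the scaling parameter), and for $\xi \in \Lie(\Tell)$ the generating vector field acts in these coordinates as $\xi_i\bigl(x_i^\ell \partial/\partial(x_i^\ell) - \partial_i^\ell \partial/\partial(\partial_i^\ell)\bigr)$. The moment map verification is then the same one-variable computation that appears in Proposition~\ref{prop:TCnmomentmap}, carried out with $p_i,w_i$ replaced by $x_i^\ell,\partial_i^\ell$.

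There is no substantive obstacle: the only real content is recognizing that the Frobenius-twisted picture of Section~\ref{subsec:pre:twisthypertoric} agrees, via Proposition~\ref{prop:center} and Corollary~\ref{cor:centeretc}, with the center of $\Dqn$ after inverting the Euler operators. Once that identification is in place, the moment map condition is formally Proposition~\ref{prop:TCnmomentmap} applied to $\Tell$ in place of $T$.
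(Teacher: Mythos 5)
Your proposal is correct and follows exactly the route the paper takes: the paper simply observes that $\Spec(\Zln^\circ)$ is identified as a $\Tell$-space with $(T^*\C^n)^{(\ell),\circ}$ and states that the lemma is "essentially the same as" Proposition~\ref{prop:TCnmomentmap}, which is precisely the reduction you carry out (with the added benefit that you make the identification via Proposition~\ref{prop:center} and Corollary~\ref{cor:centeretc} explicit). No discrepancy to report.
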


The corresponding comoment map is given by $\mu^\#:  \O(\Tell)  = \C[y_i^{\pm \ell}] \rightarrow \Zln^\circ$ taking $y_i^\ell$ to $  \alpha_i^\ell.$ 

\begin{definition} Let $S = \Zln^\circ \otimes_{\O(\Tell)} \O(T)$. \end{definition}

Equivalently, $S$ is the algebra of functions on the fiber product $\Spec(\Zln^\circ) \times_{\Tell} T$, and so there is a Cartesian square  \begin{equation}\label{diag:SpecS} \xymatrix{ \Spec(S) \ar[r]^{ f \quad \ } \ar[d]_{\mu_S} & \Spec(\Zln^\circ) \ar[d]_{\mu} \\
T \ar[r]^{ F }& \Tell }\end{equation} whose maps are described as follows. The closed points of $\Spec(S)$ can be identified with 
$$\{ P= (\lambda, \gamma) \in T^*\C^{n} \times \C^n \ | \ \gamma_i^\ell = 1 + \lambda_i \lambda_{i^\vee} \neq 0 \ \text{for $1 \leq i \leq n$} \},$$ where $\lambda = (\lambda_i, \lambda_{i^\vee})_i \in T^*\C^n$. The left vertical map $\mu_S$ sends $(\lambda, \gamma)$ to $\gamma$, $F$ takes $t$ to $t^\ell$, and the map  $f: \Spec(S) \rightarrow \Spec(\Zln^\circ)$ is induced by the inclusion $\Zln^\circ \hookrightarrow S.$ 

\begin{prop}\label{lem:etalestuff} We have:
\begin{enumerate}
 \item The map $f: \Spec(S) \rightarrow \Spec(\Zln^\circ)$ is \'etale of degree $\ell^n$. 
\item The algebra $S$ is isomorphic to the $\Zln^\circ$-subalgebra $\Zln^\circ[\{\alpha_i\}]$ of $\Dqn^\circ$ generated by the Euler operators $\alpha_i$, for $1 \leq i\leq n$.  In particular, $S$ is an integral domain.
\item  The right $S$-modules $\Dqn^\circ$, $\End_S(\Dqn^\circ)$, and $\Dqn^\circ \otimes_{\Zln^\circ} S$ are finitely generated and projective. 
\end{enumerate}
\end{prop}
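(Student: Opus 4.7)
The plan is to handle the three parts in order, with the fiber computation in part (3) being the main technical step.

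For part (1), the map $F : T \to \Tell$ in the Cartesian square \eqref{diag:SpecS} is the $\ell$-th power map $t \mapsto t^\ell$ on $T = (\C^\times)^n$. In characteristic zero this is étale of degree $\ell^n$: its differential is multiplication by $\ell$, and its fibers are torsors under $\mu_\ell^n \subset T$. Since étale morphisms and their degrees are stable under base change, $f$ inherits both properties.

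For part (2), the identity $\alpha_i^\ell = 1 + x_i^\ell \partial_i^\ell = \mu^\#(y_i^\ell)$ from Corollary \ref{cor:centeretc} lets me define a homomorphism $\varphi : S \to \Dqn^\circ$ by sending $1 \otimes y_i \mapsto \alpha_i$; its image is exactly $\Zln^\circ[\{\alpha_i\}]$, giving surjectivity onto that subalgebra. For injectivity, both $S$ and $\Zln^\circ[\{\alpha_i\}]$ are free $\Zln^\circ$-modules of rank $\ell^n$: the former by part (1), the latter because the $\alpha_i$ commute (Lemma \ref{lem:Dqnrelations}) and each $\alpha_i^{k_i}$ is a polynomial of degree $k_i$ in $x_i\partial_i$ with nonzero leading coefficient $q^{k_i(k_i-1)}$ (per the computation in the proof of Corollary \ref{cor:centeretc}), which yields a triangular change of basis with the $\Zln^\circ$-linearly independent PBW monomials $\prod_i x_i^{k_i}\partial_i^{k_i}$. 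Being a subring of the PBW domain $\Dqn^\circ$, $S$ is itself a domain.

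For part (3), the base change $\Dqn^\circ \otimes_{\Zln^\circ} S$ is free of rank $\ell^{2n}$ over $S$, and finite generation of $\Dqn^\circ$ as a right $S$-module is immediate from $\Zln^\circ \subset S$. The main step is projectivity of $\Dqn^\circ$ over $S$: I will verify that the fiber $\Dqn^\circ \otimes_S k(\nu)$ has dimension $\ell^n$ at every closed point $\nu \in \Spec(S)$ lying over some $\lambda \in \Spec(\Zln^\circ)$, then invoke the standard criterion (locally constant fiber rank over a Noetherian Jacobson domain). Since $\mathfrak{n}_\nu$ contains the central generators of $\mathfrak{m}_\lambda$, the fiber equals $\Dqn_\lambda / \sum_i \Dqn_\lambda \cdot (\alpha_i - \gamma_i)$. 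Identifying $\Dqn_\lambda \simeq \Mat(\ell^n)$ via Theorem \ref{thm:Dqnazumaya} and using Corollary \ref{lem:matalpha} to describe $\alpha_i - \gamma_i$ as the diagonal matrix with $(\mathbf r,\mathbf r)$-entry $\gamma_i(q^{-2r_i}-1)$ (vanishing precisely when $r_i = 0$), a direct column-by-column computation shows the right ideal equals the codimension-$\ell^n$ space of matrices with vanishing $\mathbf 0$-column. Together with the matching generic-rank calculation and upper semi-continuity, this yields projectivity of $\Dqn^\circ$ of rank $\ell^n$ over $S$, from which the statement for $\End_S(\Dqn^\circ)$ follows by tensoring $\Dqn^\circ$ with its $S$-linear dual.

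The chief obstacle is the fiber-dimension computation in part (3), where the explicit diagonal form of $\alpha_i$ in the matrix algebra description from Corollary \ref{lem:matalpha} is indispensable.
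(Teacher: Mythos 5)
Your proposal is correct, and parts (1) and (2) run essentially as in the paper (the paper dismisses (2) as straightforward; your rank-plus-surjectivity argument via the triangular relation between $\{\prod_i\alpha_i^{k_i}\}$ and the PBW monomials $\prod_i x_i^{k_i}\partial_i^{k_i}$ is a perfectly good way to fill that in). Where you genuinely diverge is the fiber computation in part (3). The paper works at the level of generators and relations: it factors the fiber $\D_{P}$ as a braided tensor product $\bigotimes_i \D^{(i)}_P$ of $n$ one-variable quotients $\D_q(\C)/\D_q(\C)(x^\ell-c,\partial^\ell-w,\alpha-\gamma)$ and exhibits an explicit $\ell$-element basis of each factor using the identities of Lemma \ref{lem:dxn}, with a case split according to whether $\gamma^\ell=1$. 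You instead pass through the already-proved matrix identification $\Dqn_\lambda\simeq\Mat(\ell^n)$ of Theorem \ref{thm:Dqnazumaya} and the diagonal description of $\alpha_i$ in Corollary \ref{lem:matalpha}, and read off that the submodule $\sum_i \Mat(\ell^n)(\alpha_i-\gamma_i)$ is exactly the matrices with vanishing $\mathbf 0$-column, hence codimension $\ell^n$. (This is not circular: those results precede the proposition in the paper, and in fact the paper itself uses your "vanishing column" computation later, in Proposition \ref{prop:fiberHamiltonian}, for the ideal $I$.) Your route avoids the case analysis and the explicit one-variable bases, at the cost of depending on the Azumaya identification; the paper's route is more self-contained and also records the concrete bases of the fibers, which is information your argument does not produce. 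Two small points of hygiene: the set $\Mat(\ell^n)\cdot D$ you compute is a left ideal in the usual convention (it is the submodule $M\mathfrak n_\nu$ of the right $S$-module $M=\Dqn^\circ$, which is what you want), so the phrase "right ideal" is a terminology slip rather than an error; and if the chosen root $\gamma_i$ at the point $\nu$ differs from the one fixed in Corollary \ref{lem:matalpha} by a power of $q^2$, the vanishing column merely shifts from $\mathbf 0$ to another index, so the codimension count is unaffected.
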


Henceforth, using part (1) of the above proposition, we regard $S$ as a commutative (but not central) subalgebra of $\Dqn^\circ$. 

\begin{proof}
The first statement follows from the fact that $f$ is a base-change of the \'etale map $F$. The proof of the second statement is straightforward. 

For the third claim, first note that $\Dqn^\circ \otimes_{\Zln^\circ} S$ is a finitely generated and projective $S$-module since $\Dqn$ is such a module for $Z_n$. To prove that $\Dqn^\circ$ is a finitely generated projective $S$-module, we first recall that, for a finitely generated integral domain over a field, a module is projective if and only if its rank is constant on maximal ideals. Thus, it is enough to show that, for any closed point $P \in \Spec(S)$, the dimension of the fiber $\D_P = \Dqn^\circ \otimes_S k(P)$ as a vector space over $\C$ is $\ell^n$. To this end, write $P = (\lambda, \gamma)$ as above. Observe that $\D_P \simeq \bigotimes_{ i=1}^n \D^{(i)}_P,$ where $$\D^{(i)}_P=\D_q(\C^1)/ \D_q(\C^1)( x_i^\ell - \lambda_i, \partial_i^\ell -\lambda_{i^\vee}, \alpha -\gamma_i).$$ It suffices to show that the dimension of each $\D^{(i)}_P$ is $\ell$. Fix $i$ and write $\D_P := \D^{(i)}_P$ and $ (c,w, \gamma) :=$ $(\lambda_i,\lambda_{i^\vee}, \gamma_i)$. Applications of Lemma \ref{lem:dxn} show that that the following identities hold in $\D^{(i)}_P$  for $1 \leq m \leq \ell$,
$$c \partial^m = \prod_{i=\ell-m+1}^\ell (q^{2i}\gamma -1)x^{\ell-m} \qquad w x^m = \prod_{i=1}^m(q^{2i}\gamma-1)\partial^{\ell-m}.$$ If $\gamma^\ell \neq 1$, then it is clear that $\{1, x, \dots, x^{\ell-1}\}$ and $\{1, \partial, \dots, \partial^{\ell-1}\}$ are each bases for $\D_P$. If $\gamma^\ell=1$, then $\gamma = q^{2k}$ for some $0\leq k\leq \ell-1$ and $\{1, x, \dots, x^{\ell -k-1}, \partial, \dots, \partial^{k}\}$ is a basis for $\D_P$. We conclude that $\Dqn^\circ$ is a finitely generated and projective $S$-module. It follows that $\End_S(\Dqn^\circ)$ is also such a module. \end{proof}

\begin{theorem}\label{thm:Dqnetale} There is an isomorphism of $S$-algebras: $\Dqn^\circ \otimes_{\Zln^\circ} S \stackrel{\sim}{\longrightarrow} \End_{S}(\Dqn^\circ).$ Hence,  $f:\Spec(S) \rightarrow \Spec(\Zln^\circ)$ is an \'etale splitting of $\widetilde{\Dqn^\circ}$.\end{theorem}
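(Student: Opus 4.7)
The plan is to produce a natural $S$-algebra homomorphism
$$\Phi : \Dqn^\circ \otimes_{\Zln^\circ} S \longrightarrow \End_S(\Dqn^\circ), \qquad \Phi(a \otimes s)(b) := a\,b\,s,$$
and to show it is an isomorphism by a fiberwise comparison. Viewing $S$ as the commutative subalgebra $\Zln^\circ[\alpha_1,\dots,\alpha_n] \subseteq \Dqn^\circ$ of Proposition~\ref{lem:etalestuff}(2), make $\Dqn^\circ$ into a $(\Dqn^\circ, S)$-bimodule. Right multiplication by $s\in S$ is a right $S$-module endomorphism because $S$ is commutative, $\Phi$ is well-defined on the tensor product because $\Zln^\circ$ is central in $\Dqn^\circ$, and $\Phi$ is multiplicative because $S$ is commutative (the two sides of $\Phi((a_1\otimes s_1)(a_2\otimes s_2))(b)$ and $\Phi(a_1\otimes s_1)\Phi(a_2\otimes s_2)(b)$ differ only by the order of $s_1,s_2$).

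To see $\Phi$ is an isomorphism, I first note that both sides are finitely generated projective right $S$-modules by Proposition~\ref{lem:etalestuff}(3). I then compute their ranks fiberwise. Fix a closed point $P = (\lambda,\gamma)\in\Spec(S)$ with maximal ideal $\mathfrak{m}_P$. The fiber of the source is
$$(\Dqn^\circ \otimes_{\Zln^\circ} S)\otimes_S k(P) \;=\; \Dqn^\circ \otimes_{\Zln^\circ} k(\lambda) \;=\; \Dqn_\lambda \;\cong\; \Mat(\ell^n),$$
by Theorem~\ref{thm:Dqnazumaya}, while by projectivity the fiber of the target is $\End_{k(P)}(\Dqn^\circ\otimes_S k(P))$, and the explicit basis in the proof of Proposition~\ref{lem:etalestuff}(3) yields $\dim_\C(\Dqn^\circ\otimes_S k(P)) = \ell^n$ at every closed point; hence both fibers have $\C$-dimension $\ell^{2n}$. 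The left submodule $\Dqn^\circ\mathfrak{m}_P$ is stable under left multiplication (its generators are either central elements $x_i^\ell-\lambda_i$, $\partial_i^\ell-\lambda_{i^\vee}$ or elements $\alpha_i - \gamma_i$ of $S$, and right multiplication by $S$ commutes with left multiplication), so $\Dqn^\circ\otimes_S k(P)$ acquires a left $\Dqn_\lambda$-module structure and $\Phi_P$ is its action map. As a nonzero algebra map out of the simple algebra $\Mat(\ell^n)$, $\Phi_P$ is injective, and then an isomorphism by the matching dimension.

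Fiberwise isomorphism between finitely generated projective $S$-modules of the same constant rank over the integral domain $S$ upgrades to a global $S$-module, hence $S$-algebra, isomorphism. The étale-splitting assertion then follows formally: under $\Phi$, the pullback $f^*\widetilde{\Dqn^\circ}$ is identified with the endomorphism sheaf of the locally free $\O_{\Spec(S)}$-module $\widetilde{\Dqn^\circ}\vert_{\Spec(S)}$, which is a split Azumaya algebra on $\Spec(S)$; combined with Proposition~\ref{lem:etalestuff}(1) this makes $f$ an étale splitting. I expect the only real obstacle to be the bookkeeping in the fiberwise step, namely verifying that the left $\Dqn_\lambda$-action on $\Dqn^\circ/\Dqn^\circ\mathfrak{m}_P$ is well-defined and that the dimension $\ell^n$ is attained at \emph{every} closed point of $\Spec(S)$, including the special locus $\gamma_i^\ell = 1$ where the bases described in Proposition~\ref{lem:etalestuff}(3) have to be chosen differently.
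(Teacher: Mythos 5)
Your proposal is correct and follows essentially the same route as the paper: the same map $\Phi(a\otimes s)(b)=abs$, the same reduction to fibers of finitely generated projective $S$-modules, the same identification of the fibers with $\Dqn_\lambda\simeq\Mat(\ell^n)$ and $\End_\C(\D_P)$ with $\dim_\C \D_P=\ell^n$, and the same conclusion via simplicity of the matrix algebra plus a dimension count. The "obstacle" you flag at the end (well-definedness of the $\Dqn_\lambda$-action on $\D_P$ and the rank $\ell^n$ on the locus $\gamma_i^\ell=1$) is already handled by the explicit bases in the proof of Proposition~\ref{lem:etalestuff}(3), exactly as you suspect.
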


\begin{proof} Abbreviate $\Dqn^\circ$ by $\D$ and $\Zln^\circ$ by $\ZLQ$. Define a map $ \Phi : \D \otimes_{\ZLQ} S \rightarrow \End_{S}(\D) $ by $ \Phi(d \otimes s)(d') = dd's$ for any $d,d' \in \D$ and $s \in S$. Since $\Phi$ is a map between finitely generated projective $S$-modules, it suffices\footnote{See, for example, \cite[Chapter III, Lemma 1.11]{Milne:1980bb}.} to show that the map induced on the fibers is an isomorphism for all $P \in \Spec(S)$. For $P = (\lambda,\gamma) \in \Spec(S)$, the fibers are:
$$( \D \otimes_{\ZLQ} S) \otimes_S k(P) = (f^*\D) \otimes_S k(P) =  \D_\lambda$$ 
$$ \End_{S}(\D)   \otimes_S k(P) = \End_{k(P)} ( \D \otimes_S k(P)) = \End_\C(\D_P),$$ The map  $\Phi_P : \D_\lambda \rightarrow \End_\C(\D_P)$ is the natural action map induced by the surjection $\D_\lambda \rightarrow \D_P$. It has already been shown that $\D_\lambda$ isomorphic to the matrix algebra $\Mat(\ell^n)$. The action of $\D_\lambda$ on $\D_P$ is nontrivial, and $\D_P$ has dimension $\ell^n$. Therefore, $\Phi_P$ is an isomorphism. \end{proof}

\begin{cor}\label{prop:Dqnetale} The Azumaya algebra $\widetilde{\Dqn^\circ}$ on $\Spec(\Zln^\circ)$ restricts to a trivial Azumaya algebra along fibers of the moment map $\mu: \Spec(\ZLQ^\circ) \rightarrow \Tell$. More concretely, for any $t \in T$, $$ \Dqn^\circ \otimes_S \O(\mu_S\inv(t))  = \End_{\O(\mu\inv(t^\ell))}( \Dqn/ (\alpha_i - t_i)).$$ \end{cor}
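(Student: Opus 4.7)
The statement has two parts, both of which should follow cleanly from Theorem \ref{thm:Dqnetale} together with Proposition \ref{prop:AzumayaCartesian}. For the abstract splitting along fibers of $\mu$, my plan is to invoke Proposition \ref{prop:AzumayaCartesian} directly, using the Cartesian square \eqref{diag:SpecS} together with the \'etale splitting $f$ supplied by Theorem \ref{thm:Dqnetale}. This requires no further work.

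For the explicit formula, the plan is to base-change the isomorphism $\Phi : \Dqn^\circ \otimes_{\Zln^\circ} S \stackrel{\sim}{\longrightarrow} \End_S(\Dqn^\circ)$ of Theorem \ref{thm:Dqnetale} along the quotient $S \twoheadrightarrow S/(\alpha_i - t_i)_i = \O(\mu_S\inv(t))$. On the source side, $(\Dqn^\circ \otimes_{\Zln^\circ} S) \otimes_S \O(\mu_S\inv(t))$ is the restriction of the pulled-back (split) Azumaya algebra to $\mu_S\inv(t)$; via the Cartesian isomorphism $f|_{\mu_S\inv(t)} : \mu_S\inv(t) \stackrel{\sim}{\longrightarrow} \mu\inv(t^\ell)$ this matches the restriction of $\widetilde{\Dqn^\circ}$ to $\mu\inv(t^\ell)$, which is what the displayed left-hand side denotes. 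On the target side, I would invoke the fact that $\Dqn^\circ$ is a finitely generated projective right $S$-module (Proposition \ref{lem:etalestuff}) to commute $\End_S$ with base change:
$$\End_S(\Dqn^\circ) \otimes_S \O(\mu_S\inv(t)) \;\simeq\; \End_{\O(\mu_S\inv(t))}\!\bigl(\Dqn^\circ \otimes_S \O(\mu_S\inv(t))\bigr).$$

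The remaining step is to identify $\Dqn^\circ \otimes_S \O(\mu_S\inv(t)) = \Dqn^\circ/\Dqn^\circ(\alpha_i - t_i)_i$ with $\Dqn/(\alpha_i - t_i)$: since $t_i \neq 0$ for $t \in T$, the image of $\alpha_i$ in the quotient is the unit $t_i$, so inverting $\alpha_i^\ell$ beforehand is redundant and the superscript $\circ$ can be dropped. Combined with the Cartesian identification $\O(\mu_S\inv(t)) \simeq \O(\mu\inv(t^\ell))$, this produces the formula as stated. The only nontrivial point is the commutation of $\End_S$ with base change, which is standard given the projectivity established in Proposition \ref{lem:etalestuff}; the rest is routine bookkeeping around the Cartesian square.
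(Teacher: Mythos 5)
Your proposal is correct and follows essentially the same route as the paper: the paper likewise identifies the restriction to the fiber via the Cartesian square, notes ``by inspection'' that $\Dqn^\circ \otimes_S \O(\mu_S\inv(t)) \simeq \Dqn/(\alpha_i - t_i)$, and then cites Theorem \ref{thm:Dqnetale} and Proposition \ref{prop:AzumayaCartesian}. You merely make explicit the two points the paper leaves implicit --- that $\End_S(-)$ commutes with base change by projectivity, and that the localization at the $\alpha_i^\ell$ becomes redundant once $\alpha_i \mapsto t_i \neq 0$ --- both of which are right.
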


\begin{proof} The restriction of $\widetilde{\Dqn^\circ}$ to the fiber $\mu_S\inv(t)$ over $t$ is the sheaf of algebras corresponding to the $\O(\mu_S\inv(t))$-algebra $\Dqn^\circ \otimes_S \O(\mu_S\inv(t))$. By inspection, this algebra is isomorphic to $\Dqn/ (\alpha_i - t_i)$. The result follows from Theorem \ref{thm:Dqnetale} and Proposition \ref{prop:AzumayaCartesian}. \end{proof}

%%%%%%%%%%%%%%%%%%%%%%%%%%%%

\section{An Azumaya algebra on the multiplicative hypertoric variety}\label{sec:Dqk}

In this section, $q$ is a primitive $\ell$-th root of unity, where $\ell>1$ is odd. We perform a version of quantum Hamiltonian reduction on the algebra $\Dqn$ to produce an Azumaya algebra on the Frobenius-twisted multiplicative hypertoric variety $\Xkl$. 

\subsection{Moment maps}\label{subsec:Dqk:moment} The goal of this section is to construct the vertical maps appearing in the following diagram, and to explain their relation to multiplicative hypertoric varieties. 
\begin{equation}\label{diag:momentmaps}\xymatrix{ \Zln  \ar@{^{(}->}[rr] && \Dqn \ar[rr] && \Mat(\ell^n) \\
\O(\Kell) \ar@{^{(}->}[rr] \ar[u]^{\mu_{q,K}^{(\ell)}} && \O(K) \ar[u]^{\mu_{q,K}} \ar[rr] && \C[\Gamma] \ar[u]^{\mu_\Gamma}}.\end{equation}

Recall the action of $T$ on $\Dqn$ defined in Section \ref{subsec:Dqn:def}, namely, $t \cdot x_i = t_i x_i$ and $t \cdot \partial_i = t_i\inv\partial_i$.  By Proposition \ref{prop:RepKOKmod}, we obtain an action of $\O(T)$ on $\Dqn$. There are induced actions of $K$ and $\O(K)$ on $\Dqn$, and of $\Tell$ and $\Kell$ on $\Zln$. 

\begin{definition}\label{prop:KTZDmomentmaps} Define algebra homomorphisms \begin{align*}  \mu_{q,T} : \O(T) = \C[y_i^{\pm 1}] &\rightarrow \Dqn^\circ  ; &\qquad \mu_{q,K} : \O(K) =  \C[z_j^{\pm 1}] &\rightarrow \Dqn^\circ \\  
y_i &\mapsto \alpha_i & \qquad z_j &\mapsto \prod_{i=1}^n \alpha_i^{ m_{ij}}. \end{align*}   \end{definition}

\newcommand{\mmZTell}{\mu_{\Tell}^\#}
\newcommand{\mmZKell}{\mu_{\Kell}^\#}
\newcommand{\tmmZTell}{\mu_{\Tell}} 
\newcommand{\tmmZKell}{\mu_{\Kell}}

\begin{lemma} We have:
\begin{enumerate}
 \item The homomorphisms $\mu_{q,T}$ and $\mu_{q,K}$ are quantum moment maps.
 
 \item  Taking $\ell$-th powers of $\mu_{q,T}$ and $\mu_{q,K}$, we obtain the homomorphisms: \begin{align*} \mu_{q,T}\eell : \O(\Tell) &\rightarrow \Zln^\circ;&\qquad  \mu_{q,K}\eell : \O(\Kell) &\rightarrow \Zln^\circ .\end{align*}
 
 \item Let $\Gamma = \text{\rm ker}(K \rightarrow \Kell) \simeq (\Z/\ell\Z)^d$ be the finite group considered in Section \ref{subsec:pre:braideddeform}.  The map $\mu_{q,K}$ induces a quantum moment map $\mu_\Gamma : 
\C[\Gamma] \rightarrow \Mat(\ell^n)$ making Diagram \ref{diag:momentmaps} commute. 
\end{enumerate}
 \end{lemma}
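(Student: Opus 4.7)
\emph{Plan.} For part (1), I would first observe that $\mu_{q,T}$ is a well-defined algebra homomorphism: the pairwise commutativity of the $\alpha_i$ follows from Lemma~\ref{lem:Dqnrelations}, since $\alpha_i$ is a polynomial in $x_i,\partial_i$ alone and these generators commute with $x_j,\partial_j$ for $j\neq i$. To verify the quantum moment map identity of Definition~\ref{def:qmmOK}, I would reduce by multiplicativity (in both $h$ and $a$) to the case $h=y_i$, $a\in\{x_j,\partial_j\}$. Since $\Delta(y_i)=y_i\otimes y_i$, the identity to check is simply $\alpha_i\cdot a=(y_i\rhd a)\,\alpha_i$. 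The left-hand side is computed from Lemma~\ref{lem:dxn}(3) (giving $q^{\pm 2}$ when $i=j$) and Lemma~\ref{lem:Dqnrelations} (giving $1$ when $i\neq j$); the right-hand side is given by the formula of Proposition~\ref{prop:RepKOKmod} applied to the $T$-action on $\Dqn$ with the bilinear form on $X^\ast(T)=\Z^n$ for which $\langle \mathbf e_i,\mathbf e_j\rangle_T=2\delta_{ij}$, which produces the matching scalar. For the assertion about $\mu_{q,K}$, the definitions of $\phi^\dagger$ and $\mu_{q,T}$ give $\mu_{q,K}=\mu_{q,T}\circ\phidaggerstar$ directly: $\phidaggerstar(z_j)=\prod_i y_i^{m_{ij}}$ maps to $\prod_i\alpha_i^{m_{ij}}=\mu_{q,K}(z_j)$ under $\mu_{q,T}$ (the $\alpha_i$ commute so the product is unambiguous). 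Lemma~\ref{lem:phidagger} then finishes the case of $K$.

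For part (2), the key input is Corollary~\ref{cor:centeretc}(2), which gives $\mu_{q,T}(y_i^\ell)=\alpha_i^\ell=1+x_i^\ell\partial_i^\ell$. This element lies in $\Zln$, and is invertible in the localization $\Zln^\circ$, so $\mu_{q,T}$ sends the subalgebra $\O(\Tell)=\C[y_i^{\pm\ell}]$ into $\Zln^\circ$; multiplicativity gives the restriction $\mu_{q,T}^{(\ell)}$. Composing with $\phidaggerstar|_{\O(\Kell)}:\O(\Kell)\to\O(\Tell)$ yields $\mu_{q,K}^{(\ell)}$ with image in $\Zln^\circ$.

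For part (3), I would identify $\C[\Gamma]$ with the Hopf quotient $\O(K)/(z_j^\ell-1\,:\,j)$ (using the standard Fourier identification between the group algebra of $\Gamma$ and functions on its character group), so that the bottom row of Diagram~\ref{diag:momentmaps} becomes $\O(\Kell)\hookrightarrow\O(K)\twoheadrightarrow\O(K)/(z_j^\ell-1)$. Choose a closed point $\lambda\in\Spec(\Zln^\circ)$ lying over the augmentation ideal of $\O(\Kell)$ via $\mu_{q,K}^{(\ell)}$, i.e.\ satisfying $\prod_i(1+\lambda_i\lambda_{i^\vee})^{m_{ij}}=1$ for all $j$, and use the identification $\Dqn_\lambda\simeq\Mat(\ell^n)$ from Theorem~\ref{thm:Dqnazumaya} for the top-right arrow. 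Then $\mu_{q,K}(z_j^\ell-1)=\mu_{q,K}^{(\ell)}(z_j^\ell)-1$ is a central element vanishing at $\lambda$, so its image in $\Mat(\ell^n)$ is zero. This means the composition $\O(K)\to\Dqn\to\Mat(\ell^n)$ kills the kernel of $\O(K)\to\C[\Gamma]$ and factors through $\C[\Gamma]$, defining $\mu_\Gamma$ and making both squares commute. The quantum moment map identity for $\mu_\Gamma$ is inherited: $\C[\Gamma]$ carries the Hopf structure quotiented from $\O(K)$, and the $\C[\Gamma]$-action on $\Mat(\ell^n)$ descends from the $\O(K)$-action on $\Dqn$ along the surjections, so the identity for $\mu_{q,K}$ passes to $\mu_\Gamma$.

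\emph{Main obstacle.} The only substantive verification is the moment map identity in part (1), which is a direct but finicky bookkeeping exercise with the $q$-commutation relations. The subtle point in part (3) is conceptual rather than computational: one must be careful that $\lambda$ is chosen compatibly with the augmentation of $\O(\Kell)$, since only then does the $K$-action induced from $\O(K)$ descend to a well-defined $\Gamma$-module structure on the fiber $\Mat(\ell^n)$, allowing $\mu_\Gamma$ to be interpreted as a genuine quantum moment map.
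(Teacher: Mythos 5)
Your proposal is correct and follows essentially the same route as the paper: the $q$-commutativity of the $\alpha_i$ yields the moment map identity for $\mu_{q,T}$ (you simply spell out the generator-by-generator check the paper leaves implicit), Lemma~\ref{lem:phidagger} handles $\mu_{q,K}$, the containment $\alpha_i^\ell \in \Zln$ gives part (2), and part (3) is the induced map on fibers. The only cosmetic difference is in part (3), where the paper identifies $\C[\Gamma]$ with the fiber of $\O(K)$ over a general point $\eta^\ell \in \Kell$ rather than specializing to the augmentation ideal as you do; both normalizations work.
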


\begin{proof} The $q$-commutativity of the $\alpha_i$ implies that $\mu_{q,T}$ is a quantum moment map, and so $\mu_{q,K}$ is a quantum moment map by Lemma \ref{lem:phidagger}. For the second assertion, the fact that $\alpha_i^\ell\in\Zln$ implies that the restriction of $\mu_{q,T}$ to $\O(\Tell)$ sends $z_j^\ell$ to $\alpha_i^\ell \in \Zln$, and similarly for $\mu_{q,K}$. 

We establish the third claim by considering the map on fibers induced by  $\mu_{q,K}$. Fix $\eta \in K$ and $\chi$ a character of $K$. The fiber of $\O(K)$ over $\eta^\ell \in \Kell$ is the quotient of $\O(K)$ by the ideal generated by  $z_j^\ell -\eta_j^\ell$. Thus, this fiber is isomorphic to the group algebra $\C[\Gamma]$. By results in Section \ref{subsec:Dqn:azumaya}, the fiber $\D_\lambda$ of the sheaf $\widetilde{\Dqn}$ over a point $\lambda \in \Spec(\Zln^\circ)$ lying over $\eta^\ell$ is isomorphic to $\Mat(\ell^n),$ and has an action of $\Gamma$. We see that $\mu_{q,K}$ induces the desired quantum moment map $\mu_\Gamma : \C[\Gamma] \rightarrow \Mat(\ell^n)$ making Diagram \ref{diag:momentmaps} commute. \end{proof}

\begin{rmk} Identifying $\O_q(K)$ with $\O(K)$, the $\ell$-center $\O_q\eell(K)$ with $\O(\Kell)$, and $u_q(\Lie(K))$ with $\C[\Gamma]$, we see that the map $\mu_{q,K}$ is a Frobenius quantum moment map in the sense of Definition \ref{def:fqmm}. Diagram \ref{diag:momentmaps} is a special case of Diagram \ref{diag:fqmm}. Similar remarks hold for $T$.  \end{rmk}

The maps $\mu_{q,K}\eell$ and $\mu_{q,T}\eell$ are comoment maps corresponding to torus-valued moment maps\footnote{In Section \ref{subsec:Dqn:etale}, the torus-valued moment map $\mu_{\Tell}$ is denoted simply by $\mu$.}, $$\mu_{\Tell} : \Spec(\Zln^\circ) \rightarrow \Tell ;\qquad \mu_{\Kell} : \Spec(\Zln^\circ) \rightarrow \Kell.$$ Identifying $\Zln^\circ$ with $\O((T^*\C^n)^{(\ell), \circ})$, these coincide with the moment maps of Section \ref{subsec:pre:twisthypertoric}, and we have the following result. 

\begin{lemma}\label{lem:hypertoricdef} Let $\mathcal K = (K, \eta, \chi)$. The Frobenius-twisted multiplicative hypertoric variety $\Xk\eell$ is isomorphic to the GIT quotient of $\tmmZKell\inv(\eta^\ell)$ by $\Kell$: $$\Xk\eell \simeq \tmmZKell \inv(\eta^\ell)  \text{ $\!$/$\! \!$/$\!$}_{\chi\eell}  \Kell.$$\end{lemma}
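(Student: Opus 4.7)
The statement is essentially a dictionary between two descriptions of the same data, so the plan is to build the identification at the level of affine coordinate rings and then check that the $\Kell$-action, moment map, and stability character all correspond.

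First I would identify $\Spec(\Zln^\circ)$ with $(T^*\C^n)^{(\ell), \circ}$. By Proposition \ref{prop:center}, $\Zln = \C[x_i^\ell, \partial_i^\ell]$, and by Corollary \ref{cor:centeretc}(2), the localization inverting the $\alpha_i^\ell$ is exactly inverting $1 + x_i^\ell \partial_i^\ell$. Sending $x_i^\ell \mapsto p_i^\ell$ and $\partial_i^\ell \mapsto w_i^\ell$ therefore gives a ring isomorphism $\Zln^\circ \xrightarrow{\sim} \O((T^*\C^n)^{(\ell), \circ})$ that intertwines $\alpha_i^\ell$ with $1 + p_i^\ell w_i^\ell$.

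Next I would verify that this isomorphism is $\Kell$-equivariant. By construction, $T$ acts on $\Dqn$ by $t \cdot x_i = t_i x_i$, $t \cdot \partial_i = t_i^{-1}\partial_i$; this restricts to an action on $\Zln$ that factors through $\Tell$ by sending $(t_i^\ell)$ to $x_i^\ell \mapsto t_i^\ell x_i^\ell$, $\partial_i^\ell \mapsto t_i^{-\ell}\partial_i^\ell$, and precomposition with $\phi\eell : \Kell \hookrightarrow \Tell$ produces exactly the induced scaling $\Kell$-action used in Section \ref{subsec:pre:twisthypertoric}. Under the identification of the previous paragraph, this matches the $\Kell$-action on $(T^*\C^n)^{(\ell), \circ}$ coming from componentwise scaling on $T^*\C^n$.

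Then I would compare the two moment maps. The comoment map $\mu_{q,K}^{(\ell)} : \O(\Kell) \to \Zln^\circ$ sends $z_j^\ell$ to $\prod_{i=1}^n \alpha_i^{\ell m_{ij}} = \prod_{i=1}^n (1 + x_i^\ell \partial_i^\ell)^{m_{ij}}$, using Corollary \ref{cor:centeretc}(2), while $\mu_{\Kell}^\#$ from Section \ref{subsec:pre:twisthypertoric} sends $z_j^\ell$ to $\prod_{i=1}^n (1 + p_i^\ell w_i^\ell)^{m_{ij}}$. Hence the dual torus-valued moment maps $\tmmZKell : \Spec(\Zln^\circ) \to \Kell$ and $\mu_{\Kell} : (T^*\C^n)^{(\ell), \circ} \to \Kell$ are identified, so in particular $\tmmZKell^{-1}(\eta^\ell)$ is carried to $\mu_{\Kell}^{-1}(\eta^\ell)$.

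Finally, since the character $\chi\eell$ of $\Kell$ is the same on both sides by definition, the two GIT quotients $\tmmZKell^{-1}(\eta^\ell) \mathbin{/\!/}_{\chi\eell} \Kell$ and $\mu_{\Kell}^{-1}(\eta^\ell) \mathbin{/\!/}_{\chi\eell} \Kell = \Xk\eell$ are canonically isomorphic. The only subtle point — and the one I would double-check — is the passage between the $T$-action on $\Dqn$ and the induced $\Kell$-action on the Frobenius-twisted center, where one needs to confirm that the composition with $\phi$ on $\Dqn$ really matches the $\Kell$-action on $(T^*\C^n)^{(\ell), \circ}$ induced from precomposition with $\phi\eell : \Kell \hookrightarrow \Tell$; this is a direct check on generators but is the only place where one could easily introduce an $\ell$-th power discrepancy.
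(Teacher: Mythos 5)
Your proposal is correct and matches the paper's treatment: the paper gives no separate proof, simply noting in the sentence preceding the lemma that identifying $\Zln^\circ$ with $\O((T^*\C^n)^{(\ell),\circ})$ makes $\mu_{\Kell}$ coincide with the moment map of Section \ref{subsec:pre:twisthypertoric}, which is exactly the identification you spell out via Proposition \ref{prop:center} and Corollary \ref{cor:centeretc}(2). Your extra checks of $\Kell$-equivariance and the matching of $\chi\eell$ are the right details to verify and introduce no discrepancy.
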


\subsection{The sheaf $\Dqk$}\label{subsec:Dqk:def}

Fix the data $\mathcal K = (K, \eta, \chi)$, as in Section \ref{subsec:pre:hypertoric}. Thus, $\eta^\ell$ defines a point in $\Kell$. Abbreviate the fiber $\tmmZKell\inv(\eta^\ell)$ of $\tmmZKell$ over $\eta^\ell$ by  $X$. We observe that:

\begin{enumerate}
 \item The variety $X$ is affine with algebra of functions given by $\O(X) = \Zln/(\mmZKell - \eta^\ell)$, that is, the quotient of $\Zln$ by the ideal generated by $\mmZKell(z_j^\ell) - \eta_j^\ell$ for $j = 1, \dots, d$.  
 \item Let $ X^{ss}$ denote the set of  semistable points of $X$ with respect to $\chi\eell$. By Lemma \ref{lem:hypertoricdef}, there is a surjective map $\pi: X^{ss} \rightarrow \Xk\eell$ whose fibers are closed $\Kell$-orbits. 
 \item Consider  $\Dqn/ \Dqn(\mu_{q,K} - \eta)$, that is, the quotient of $\Dqn$ by the left ideal generated by the elements $\mu_{q,K}(z_j) -\eta_j$ for $j= 1, \dots, d$. This quotient is a module for $\O(X)$, and hence defines a coherent sheaf $\widetilde{\D_\eta}$  on $X$.
\end{enumerate}

We arrive at the main definition of this paper:

\begin{definition} The sheaf $\Dqk$ of $q$-difference operators on $\Xk\eell$ is defined as $$\Dqk = \pi_*\left( \widetilde{\D_\eta}  \vert_{X^{ss}}\right)^K.$$   \end{definition} 

Explicitly, for an open subset $U \subseteq \Xkl$, the inverse image $\pi\inv(U)$ is a $K$-invariant subset of $X^{ss}$ and $$\Gamma(U, {\Dqk}) =\Gamma\left(\pi \inv (U), \widetilde{\D_\eta} \right)^K.$$

The following proposition shows that $\Dqk$ is a sheaf of algebras that can be regarded as a quantization of the multiplicative hypertoric variety.  

\begin{prop}\label{prop:Dqkbasics} We have:
\begin{enumerate}
 \item \label{lem:DAalg} The sheaf $\Dqk$ is a sheaf of algebras on $\Xk\eell$.
 \item \label{prop:assocgrad} There is an isomorphism $\Dqk \simeq (\Frl)_* \O_{\Xk}$ of coherent sheaves on $\Xkl$.
\end{enumerate}
\end{prop}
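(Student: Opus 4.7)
For part (1), the plan is to work affine-locally. Cover $\Xkl$ by principal affine opens $U_r = \Spec(\O(X)[r\inv]^{K\eell})$ where $r \in \O(X)$ is semi-invariant for some power of $\chi\eell$. By construction,
\[ \Gamma(U_r, \Dqk) = \bigl(\Dqn[r\inv]/\Dqn[r\inv](\mu_{q,K} - \eta)\bigr)^K, \]
where $r$ has been lifted to a central element of $\Zln \subset \Dqn$. The lift is well-defined modulo $\Dqn(\mu_{q,K} - \eta)$ since any two lifts differ by an element of $\Zln(\mu_{q,K}\eell - \eta^\ell)$, which lies in $\Dqn(\mu_{q,K} - \eta)$ via the commutative factorization $u^\ell - v^\ell = (u-v)\sum_{k} u^{\ell-1-k} v^k$ with $u = \mu_{q,K}(z_j)$ and $v = \eta_j$. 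Localization at $r$ is unproblematic since $r$ is central in $\Dqn$, and the composition $\mu_{q,K}: \O(K) \to \Dqn \to \Dqn[r\inv]$ remains a quantum moment map. Proposition \ref{prop:qhamred} then supplies $\Gamma(U_r, \Dqk)$ with an associative $\C$-algebra structure, and these structures glue by the universal property of central localization.

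For part (2), the plan is to leverage the PBW-type decomposition of $\Dqn$ to match it with classical $\O(T^*\C^n)$ as a $\Zln$-module and then descend to the Hamiltonian quotients. By Proposition \ref{prop:center} and Corollary \ref{cor:centeretc}, $\Dqn$ is a free $\Zln$-module of rank $\ell^{2n}$ with basis the PBW monomials $\{x^I \partial^J : 0 \leq I_k, J_k < \ell\}$. Classical $\O(T^*\C^n) = \C[x_i, \partial_i]$ has the parallel decomposition as a free $\O((T^*\C^n)\eell)$-module, and under the identification $\O((T^*\C^n)\eell) \simeq \Zln$, matching PBW monomials defines a $K$-equivariant $\Zln$-module isomorphism $\Dqn \simeq \O(T^*\C^n)$. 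On affine opens $U_r$ of $\Xkl$, proving $\Gamma(U_r, \Dqk) \simeq \Gamma(U_r, (\Frl)_*\O_{\Xk})$ then amounts to identifying the quantum Hamiltonian reduction of $\Dqn[r\inv]$ with the classical one of $\O(T^*\C^n)[r\inv]$, compatibly with the $K$-action. Taking $K$-invariants and gluing across the cover yields the desired isomorphism of coherent sheaves on $\Xkl$.

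The main obstacle is aligning the quantum and classical moment-map ideals under the PBW identification: the quantum generator $\prod_i \alpha_i^{m_{ij}} - \eta_j$, when expanded in the PBW basis, produces quantum-binomial scalars $c_k^{(m)}$ (as in the proof of Corollary \ref{cor:centeretc}) rather than the classical binomials appearing in $\prod_i(1 + x_i\partial_i)^{m_{ij}} - \eta_j$. To circumvent this mismatch, I would pass to the \'etale cover $\Spec(S) \to \Spec(\Zln^\circ)$ of Theorem \ref{thm:Dqnetale}, over which $\Dqn^\circ$ becomes $\End_S(\Dqn^\circ)$ and the commutative subalgebra generated by the $\alpha_i$ is trivialized. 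On this cover, both the quantum and classical moment ideals are principal in the adjoined subring generated by $\ell$-th roots of $1 + x_i^\ell\partial_i^\ell$, and the resulting quotients match as $\Gamma$-equivariant (and hence $K$-equivariant) $S$-modules. Galois descent along the finite \'etale cover, combined with the rank count (both sides have rank $\ell^{2(n-d)}$ over $\O(\Xkl)$ generically), then produces the coherent sheaf isomorphism $\Dqk \simeq (\Frl)_*\O_{\Xk}$.
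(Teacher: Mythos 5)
Your treatment of part (1) is essentially the paper's argument: cover $\Xkl$ by the affine opens $U_r$, lift $r$ to the central subalgebra, localize at the (central, semi-invariant) lift, and invoke Proposition \ref{prop:qhamred}. The added remark that two lifts of $r$ differ by an element of the left ideal, via the factorization $u^\ell - v^\ell = (u-v)\sum_k u^{\ell-1-k}v^k$ applied to $u = \mu_{q,K}(z_j)$, is a correct detail the paper leaves implicit.

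Part (2) is where you diverge, and there is a genuine gap. You correctly observe that matching PBW monomials does not carry the left ideal $\Dqn(\mu_{q,K}-\eta)$ onto the classical ideal $(\mu_K^\# - \eta)$: the quantum identity $\alpha_i^\ell = 1 + x_i^\ell\partial_i^\ell$ has no commutative analogue, so the two ideals sit differently over the common center. But the proposed repair does not close this gap. The assertion that, after base change to $S$, ``the resulting quotients match as $\Gamma$-equivariant $S$-modules'' is precisely the statement to be proved and is nowhere justified; note that on the classical side $\O((T^*\C^n)^\circ)$ already contains its own $\ell$-th roots $1+x_i\partial_i$ of nothing relevant --- since $(1+x_i\partial_i)^\ell \neq 1 + x_i^\ell\partial_i^\ell$ in $\C[x_i,\partial_i]$, these do not coincide with the roots $y_i$ adjoined in $S$, so it is not even clear what the claimed matching over the cover is. More fundamentally, two modules that become isomorphic after a finite \'etale (even Galois) base change need not be isomorphic: to descend along $\Spec(S)\rightarrow\Spec(\Zln^\circ)$ you must exhibit an isomorphism over $S$ compatible with the canonical descent data on both sides, and a rank count cannot substitute for this (non-isomorphic locally free modules of equal rank are exactly the twisted forms that obstruct such descent). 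Also, the cover you invoke lives over $\Spec(\Zln^\circ)$ rather than over $X$ or $\Xkl$, so even the geometric setup for the descent is not in place.

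For comparison, the paper's proof of (2) simply asserts a $K$-equivariant $\O(X)$-module isomorphism $\Dqn/\Dqn(\mu_{q,K}-\eta)\simeq\O(\mu_K\inv(\eta))$ and deduces the sheaf statement from the definitions of $\Dqk$ and $(\Frl)_*\O_{\Xk}$. The natural way to supply that isomorphism is not a PBW identification of the ambient algebras but the $T$-weight decomposition: both quotients are graded by $X^*(T)$, each weight space is a cyclic rank-one module over the image of the commutative subalgebra generated by the Euler operators $\alpha_i$ (respectively by the $1+x_i\partial_i$), and one builds the $\O(X)$-linear, $K$-equivariant isomorphism weight by weight. If you want to salvage your route, that is the step to replace the \'etale-cover-plus-descent argument with.
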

 
\begin{proof} For the first claim, observe that the variety $\Xk\eell$ has a cover by affine open sets $U_r$ $=$ $\Spec\left(\oxr^{\Kell}\right)$ where $r \in \O(X)_{(\chi\eell)^n}$ is $\left(\chi\eell\right)^n$-invariant for $n >0$. Fix one such $r$ and, observing that $\O(X)$ $\simeq$ $\Zln^\circ/ (\mmZKell- \eta^\ell)$, choose a lift $\tilde r \in \Zln^\circ \subseteq \Dqn^\circ$ of $r$. Then $$\Gamma(U_r, \Dqk)  = (\D \otimes_{\O(X)} \oxr)^K = \left(\D[\tilde r\inv]/ \D[\tilde r\inv]( \mu_r- \eta)\right)^K,$$ where we abbreviate $\Dqn^\circ$ by $\D$, and $ \mu_r$ is quantum moment map obtained as the composition of $\mu_{q,K}$ with the localization map $\D \rightarrow \D[\tilde r\inv]$. The algebra structure follows from the general fact stated in Proposition \ref{prop:qhamred}.

For the second claim, let $\mu_K : (T^*\C^n)^\circ \rightarrow K$ be the moment map of Section \ref{subsec:pre:hypertoric}. Now, $\Dqn/ \Dqn(\mu_{q,K} - \eta)$ and $\O(\mu_K\inv(\eta)) = \C[x_i, \partial_i]/(\mu_K^\# - \eta)$ are isomorphic as vector spaces, as modules for $\O(X)$, and as representations of $K$. The result follows from the definitions of $\Dqk$ and $(\Frl)_*\O_{\Xk}$. \end{proof}

\begin{comment}
 It can also be shown that $\Dqk$ is an $\Hell$-equivariant sheaf. 
\end{comment}

Recall that we refer to the algebra $\Dqn^K$ of $K$-invariants of $\Dqn$ as a hypertoric quantum group. If $q$ is a root of unity, then the global sections of $\Dqk$ is a localization of the central reduction  $\U_\eta=  (\Dqn/\Dqn(\mu_{q,K} -\eta))^K$ of the hypertoric quantum group $\Dqn^K$. The following conjecture is a group-valued moment map version of Lemma 3.8 of \cite{McGertyNevins} (see also \cite[Proposition 4.11]{BeKu}).

\begin{conj}\label{conj:globalsections} Suppose that $K$ is unimodular. Then, for a generic choice of character, the global sections of $\Dqk$ is precisely the central reduction $\U_\eta$ of the hypertoric quantum group. 
\end{conj}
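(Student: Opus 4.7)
The plan is to reduce the conjecture to the classical (commutative) statement that $\MzK$ is the affinization of $\Xk$, by means of the coherent-sheaf isomorphism of Proposition \ref{prop:Dqkbasics}(2). First, I would produce a commutative square
\[
\xymatrix{
  \U_\eta \ar[r] \ar[d]_{\sim} & \Gamma(\Xkl, \Dqk) \ar[d]^{\sim} \\
  \O(\mu_K^{-1}(\eta))^K \ar[r] & \Gamma(\Xk, \O_{\Xk})
}
\]
whose horizontal arrows are restriction from $X$ to the semistable locus $X^{ss}$ followed by taking $K$-invariants. The left vertical arrow is obtained by taking $K$-invariants of the $K$-equivariant $\O(X)$-module isomorphism $\D_\eta \cong \O(\mu_K^{-1}(\eta))$ from Proposition \ref{prop:Dqkbasics}(2). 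The right vertical arrow uses the sheaf isomorphism $\Dqk \cong (\Frl)_*\O_{\Xk}$ together with the finiteness of $\Frl$ to identify $\Gamma(\Xkl, \Dqk) \cong \Gamma(\Xk, \O_{\Xk})$. Both vertical arrows are bijections, so the conjecture reduces to showing that the bottom arrow is an isomorphism. Moreover, the top horizontal map is tautologically an algebra homomorphism (restriction of the sheaf of algebras $\widetilde{\D_\eta}$ followed by the $K$-invariant functor), so bijectivity is enough to upgrade it to an algebra isomorphism.

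Second, I would establish the bottom isomorphism by a standard hypertoric argument. Under the hypotheses (unimodular $K$ and generic $\chi$), Remark \ref{rmk:hypertoric} guarantees that $K$ acts freely on the locus where stable equals semistable, so $\Xk$ is smooth and the affinization map $\nu : \Xk \to \MzK$ is a projective symplectic resolution. It then suffices to verify that $\mu_K^{-1}(\eta)$ is normal (being a smooth complete intersection under these hypotheses) and that the complement $\mu_K^{-1}(\eta)\setminus \mu_K^{-1}(\eta)^{ss}$ has codimension at least two. Hartogs' lemma then yields $\O(\mu_K^{-1}(\eta)) \cong \O(\mu_K^{-1}(\eta)^{ss})$, whose $K$-invariant part is precisely the desired bottom isomorphism, matching the definition of $\MzK$ as $\Spec(\O(\mu_K^{-1}(\eta))^K)$ and the identification $\Gamma(\Xk,\O_{\Xk}) = \O(\mu_K^{-1}(\eta)^{ss})^K$.

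The main obstacle is the codimension-two estimate on the GIT-unstable locus in the multiplicative (group-valued) setting. In the linear (additive) hypertoric case this is well known and is established by an explicit combinatorial analysis of the defining hyperplane arrangement. In the multiplicative case, the moment map components are products $\prod_i (1+p_iw_i)^{m_{ij}}$ rather than linear combinations, and the fiber $\mu_K^{-1}(\eta)$ acquires additional boundary hypertori $\{1+p_iw_i=0\}$ whose interaction with the unstable chambers must be controlled. Unimodularity of $K \hookrightarrow T$ is expected to prevent these boundary loci from producing unstable components of codimension one, but making the combinatorics precise — and giving a clean characterization of the class of ``generic'' characters for which the bound holds — requires a careful multiplicative analogue of the arrangement analysis of \cite{BD, HauselStrumfels}, and is the principal technical burden of the conjecture.
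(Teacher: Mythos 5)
This statement is left as an open conjecture in the paper --- there is no proof to compare against; the author only remarks that it is a group-valued analogue of Lemma 3.8 of \cite{McGertyNevins} and of \cite[Proposition 4.11]{BeKu}. So your proposal should be judged on its own. Your first step is sound and is surely the intended strategy: Proposition \ref{prop:Dqkbasics}(2) identifies $\widetilde{\D_\eta}$ with $\O(\mu_K^{-1}(\eta))$ as a $K$-equivariant $\O(X)$-module, the identification is compatible with localization at the $r\in\O(X)_{(\chi^{(\ell)})^n}$ and with taking $K$-invariants, and the natural map $\U_\eta\to\Gamma(\Xkl,\Dqk)$ is an algebra map, so bijectivity of the commutative restriction map $\O(\mu_K^{-1}(\eta))^K\to\O(\mu_K^{-1}(\eta)^{ss})^K$ would indeed give the conjecture. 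One wrinkle you should make explicit: the paper states that $\Gamma(\Xkl,\Dqk)$ is a priori only a \emph{localization} of $\U_\eta$, because the sheaf is built over $X\subseteq\Spec(Z_n^\circ)$, where the $\alpha_i^\ell=1+x_i^\ell\partial_i^\ell$ have been inverted, whereas $\U_\eta$ is defined from $\Dqn$ without this inversion. Your square implicitly assumes these agree on $\mu_K^{-1}(\eta)$; this is automatic for coordinates on which $K$ acts nontrivially (the relation $\prod_i(1+p_iw_i)^{m_{ij}}=\eta_j$ forces those factors to be invertible) but needs an argument, or an added hypothesis, for coordinates with $m_{ij}=0$ for all $j$.

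The genuine gap is that the second half of your argument is a statement of what remains to be done rather than a proof of it. The normality of $\mu_K^{-1}(\eta)$ for non-regular $\eta$ and, above all, the codimension-two bound on the unstable locus are exactly the content of the conjecture; you have reduced a conjecture to an equivalent unproved geometric assertion and correctly identified it as ``the principal technical burden.'' Note also that the paper's earlier claim that $\MzK$ is ``the affinization of $\Xk$'' is precisely the bottom isomorphism of your square in disguise, and is itself asserted without proof, so you cannot cite it without circularity. Two further cautions: Hartogs requires normality of the total fiber $\mu_K^{-1}(\eta)$ and not merely smoothness of the stable locus, and in the additive setting McGerty--Nevins do not argue via a naive codimension estimate but via a Kirwan-type analysis of the unstable stratification; a robust multiplicative proof would likely need to follow that route, controlling the boundary hypertori $\{1+p_iw_i=0\}$ that have no additive counterpart. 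As it stands, your proposal is a correct and useful reduction, not a proof.
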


\subsection{The Azumaya property}\label{subsec:Dqk:azumaya}

Recall from Section \ref{subsec:pre:hypertoric} that the data $\mathcal K = (K, \eta,\chi)$ is smooth if $K$ acts freely on the $\chi$-semistable points of $\mu_{K} \inv(\eta)$. In this case, the associated multiplicative hypertoric variety $\Xk$ and its Frobenius twist $\Xkl$ are smooth. In this section, we prove the following result:
\begin{theorem}\label{thm:Dqkazumaya} Suppose $\mathcal K$ is smooth. Then the sheaf  $\Dqk$ is an Azumaya algebra over $\Xkl$. \end{theorem}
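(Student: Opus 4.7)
The plan is to apply the local characterization of Azumaya algebras recalled at the end of Section~\ref{subsec:pre:azumaya}: since $\Xkl$ is a connected smooth variety of finite type over $\C$, it suffices to verify that $\Dqk$ has constant rank as an $\O_{\Xkl}$-module and that every closed fiber is a matrix algebra. Constant rank is immediate from Proposition~\ref{prop:Dqkbasics}.\ref{prop:assocgrad}: the isomorphism $\Dqk \simeq (\Frl)_*\O_{\Xk}$, combined with the fact that $\Xk \simeq \Xkl$ is smooth and that $\Frl$ is a finite flat morphism of degree $\ell^{2(n-d)}$ between smooth varieties of equal dimension, forces $\Dqk$ to be locally free of rank $\ell^{2(n-d)}$.

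For the fiber condition, fix a closed point $\bar x \in \Xkl$ together with a lift $x \in X^{ss}$. Because $\mathcal K$ is smooth, the $K$-action on $X^{ss}$ factors through a free action of $\Kell$; the subgroup $\Gamma = \ker(K\to\Kell)$ acts trivially on $X^{ss}$, so $\pi\colon X^{ss}\to \Xkl$ is an $\Kell$-principal bundle. Faithfully flat descent along $\pi$, together with the fact that $|\Gamma|=\ell^d$ is invertible in $\C$, yields an identification
\[
\Dqk \otimes_{\O_{\Xkl}} k(\bar x)\;\simeq\;\bigl(\D_\eta\otimes_{\O(X)}k(x)\bigr)^{\Gamma},
\]
where $\Gamma$ acts on $\D_\eta$ through its action on $\Dqn$.

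The right-hand side is now accessible via the results of Section~\ref{subsec:Dqn:azumaya}. The moment-map constraints force $\alpha_i^\ell = 1 + x_i^\ell\partial_i^\ell$ to be nonzero on $X^{ss}$, so $X^{ss}\subseteq \Spec(\Zln^\circ)$ and Theorem~\ref{thm:Dqnazumaya} gives an isomorphism $\Dqn\otimes_{\Zln} k(\tilde x)\simeq \Mat(\ell^n)$ at the image $\tilde x$ of $x$; under this isomorphism Corollary~\ref{lem:matalpha} realizes each $\mu_{q,K}(z_j)=\prod_i\alpha_i^{m_{ij}}$ as an explicit diagonal matrix. Quotienting $\Mat(\ell^n)$ by the left ideal generated by $\mu_{q,K}(z_j)-\eta_j$ kills precisely the columns indexed by $\mathbf{s} \in (\Z/\ell\Z)^n$ where the prescribed diagonal entries fail to equal $\eta_j$; the surviving columns are parametrized by an affine subset $R_x \subseteq (\Z/\ell\Z)^n$. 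Taking $\Gamma$-invariants under the grading of Corollary~\ref{cor:deformmatrix} restricts both row and column indices to $R_x$, producing a full matrix algebra $\Mat(|R_x|)$. The smoothness of $\mathcal K$, equivalent by Remark~\ref{rmk:hypertoric} to unimodularity of the embedding $K\hookrightarrow T$, implies that the defining transpose matrix is surjective modulo $\ell$ with kernel of cardinality $\ell^{n-d}$, so $|R_x|=\ell^{n-d}$ uniformly and the resulting fiber dimension $\ell^{2(n-d)}$ matches the rank computed in the first step.

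The principal obstacle is the descent step identifying the fiber of $\Dqk$ at $\bar x$ with the $\Gamma$-invariants of the fiber of $\widetilde{\D_\eta}$ at $x$: one must cleanly separate the $\Kell$-principal-bundle part of the $K$-action, which governs descent from $X^{ss}$ to $\Xkl$, from the residual $\Gamma$-action, which is nontrivial on the Azumaya fibers of $\widetilde{\Dqn^\circ}$ even though $\Gamma$ acts trivially on $X^{ss}$. Once this commutation of ``Hamiltonian reduction'' with ``restriction to a closed fiber'' is in place, the explicit diagonalization of the Euler operators given by Corollary~\ref{lem:matalpha} reduces the remainder of the argument to a combinatorial count over $(\Z/\ell\Z)^n$.
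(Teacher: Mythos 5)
Your proof is correct and follows essentially the same route as the paper: identify the fiber of $\Dqk$ at a closed point with the $\Gamma$-invariants of the fiber of $\widetilde{\D_\eta}$ over a lift (using freeness of the $\Kell$-action on $X^{ss}$), then use Theorem~\ref{thm:Dqnazumaya} and Corollary~\ref{lem:matalpha} to compute those invariants combinatorially as $\Mat(\ell^{n-d})$ — this is exactly the content of Propositions~\ref{prop:gammainvariants} and~\ref{prop:fiberHamiltonian} and the paper's proof of the theorem. The only cosmetic differences are that you phrase the fiber identification as descent along a $\Kell$-torsor rather than via invariant sections over a homogeneous orbit, and you make the constant-rank step explicit via $(\Frl)_*\O_{\Xk}$ (the cardinality $\ell^{n-d}$ of $\ker M^\dagger$ needs only Lemma~\ref{prop:rankker}, not unimodularity).
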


The proof requires some preliminary results. Let $\phi: K \hookrightarrow T$ be the inclusion of tori from above, specified by the $n$ by $d$ integer matrix $M = (m_{ij})$. The following lemma is easy to verify. 
\begin{lemma}\label{prop:rankker} For any positive integer $\ell$, the transpose of the matrix $M$ induces a map $M^\dagger :$ $(\Z/\ell\Z)^n  \rightarrow (\Z/\ell\Z)^d$ whose kernel is a free $\Z/\ell\Z$-module of rank $n-d$. \end{lemma}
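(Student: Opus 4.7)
The plan is to apply Smith normal form to $M$, invoking the hypothesis that underlies the section: since Lemma \ref{prop:rankker} is introduced as a preparatory result for Theorem \ref{thm:Dqkazumaya}, we may work under the smoothness assumption on $\mathcal K$. By Remark \ref{rmk:hypertoric}, smoothness forces the embedding $\phi: K \hookrightarrow T$ to be unimodular, meaning the cokernel $\Z^n/M\Z^d$ is torsion-free.

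First, I would apply the Smith normal form theorem to find $P \in \GL_n(\Z)$ and $Q \in \GL_d(\Z)$ such that $PMQ$ has the block form $\bigl(\begin{smallmatrix} D \\ 0 \end{smallmatrix}\bigr)$, where $D = \mathrm{diag}(d_1,\ldots,d_d)$ with $d_1 \mid d_2 \mid \cdots \mid d_d$ positive integers; positivity uses that $M$ is injective, which follows from $\phi$ being an embedding. Reading off the cokernel gives $\Z^n/M\Z^d \cong \bigoplus_{i=1}^d (\Z/d_i\Z) \oplus \Z^{n-d}$, so torsion-freeness forces each $d_i = 1$, yielding $PMQ = \bigl(\begin{smallmatrix} I_d \\ 0 \end{smallmatrix}\bigr)$.

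Next, I would transpose and reduce mod $\ell$. The identity $Q^T M^T P^T = (I_d \mid 0)$ holds over $\Z$, and since $P^T, Q^T$ remain invertible when reduced mod $\ell$, the map $M^T \bmod \ell$ is identified, via isomorphisms of $\Z/\ell\Z$-modules on source and target, with the standard projection $(\Z/\ell\Z)^n \twoheadrightarrow (\Z/\ell\Z)^d$ onto the first $d$ coordinates. The kernel of the latter is the free $\Z/\ell\Z$-module on the last $n-d$ standard basis vectors, hence $\ker(M^T \bmod \ell)$ is free of rank $n-d$.

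There is no serious obstacle — the entire argument is a routine appeal to Smith normal form once the unimodularity hypothesis is surfaced. The only subtlety worth flagging is that the hypothesis is essential: without it, the conclusion fails (for instance, $M = (2)$ has $n-d = 0$ but $\ker(M^T \bmod 2) = \Z/2\Z$), which confirms that the lemma implicitly requires the smoothness context in which it is stated.
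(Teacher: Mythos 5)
Your Smith normal form computation is, mechanically, exactly the verification the paper has in mind for this lemma (which it leaves as ``easy to verify''): reduce $M$ by integer row and column operations to the identity matrix in the top $d$ rows and zeros below, transpose, and reduce mod $\ell$, so that $M^\dagger$ is identified with the coordinate projection $(\Z/\ell\Z)^n \rightarrow (\Z/\ell\Z)^d$, whose kernel is visibly free of rank $n-d$. That part of your argument is correct and is the same route as the paper's.

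Where you go astray is in how you justify that all elementary divisors are $1$, and in your closing claim of necessity. You derive torsion-freeness of $\Z^n/M\Z^d$ from smoothness of $\mathcal K$ via Remark \ref{rmk:hypertoric}, and then assert that the lemma fails without this, offering $M=(2)$ as a counterexample. But $M=(2)$ corresponds to $k \mapsto k^2$, which has kernel $\{\pm 1\}$ and is therefore not an inclusion $K \hookrightarrow T$; it is ruled out by the standing hypothesis of Section \ref{subsec:pre:tori}, not by smoothness. In general an injective homomorphism of tori corresponds to a surjection of character lattices, so the standing assumption that $\phi$ is an inclusion already forces $M^\dagger : \Z^n \rightarrow \Z^d$ to be surjective over $\Z$; equivalently $M\Z^d$ is saturated in $\Z^n$, equivalently the Smith form of $M$ is the identity over zeros, and the lemma follows with no reference to $\eta$, $\chi$, or smoothness. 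Note also that ``unimodular'' does not mean ``torsion-free cokernel'': it is a strictly stronger condition (for instance $M = (1,1,2)^{T}$ gives a genuine torus embedding that is not unimodular, yet the lemma's conclusion still holds). So your proof is valid in the context where the lemma is applied, since Theorem \ref{thm:Dqkazumaya} assumes $\mathcal K$ smooth, but the hypothesis you surface is not the one the lemma rests on, and the claim that the statement requires the smoothness context is incorrect.
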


\begin{comment}
\begin{proof} The entries in each column of the matrix $M$ have greatest common divisor equal to one. Therefore, using row and column operations over \Z, the matrix $M$ can be put into the form of the identity matrix in the top $d$ rows and zeros in the bottom $n-d$ matrix. Taking transposed gives the result. \end{proof}
\end{comment}

Let  $\lambda \in \Spec(\Zln^\circ)$ and let $\D_\lambda = \Dqn \otimes_{\Zln} k(\lambda)$ be the fiber of $\Dqn$ over $\lambda$. The action of $K = (\C^\times)^d$ on $\Dqn$ induces an action of $\Gamma = \text{ker}(K \rightarrow \Kell) \simeq (\Z/\ell\Z)^d$ on $\D_\lambda$.

\begin{prop}\label{prop:gammainvariants} There is an isomorphism $(\D_\lambda)^\Gamma \stackrel{\sim}{\longrightarrow} \Mat(\ell^{n-d}) ^{\oplus \ell^{d}}.$ \end{prop}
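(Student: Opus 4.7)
The plan is to reduce the computation of $\Gamma$-invariants to Schur's lemma by presenting $\Mat(\ell^n)$ as endomorphisms of an explicit $\Gamma$-module. By Theorem \ref{thm:Dqnazumaya} and Corollary \ref{cor:deformmatrix}, the isomorphism $\D_\lambda \simeq \Mat(\ell^n)$ is one of algebras in $\Rep_q(\Gamma)$: the $(\Z/\ell\Z)^d$-grading on $\Mat(\ell^n) = \bigotimes_{i=1}^n \Mat(\ell)_i$ assigns degree $(r-s)\deg(i)$ to $\elm^i_{r,s}$, so the tensor monomial $\bigotimes_i \elm^i_{r_i,s_i}$ has degree $M^\dagger(\mathbf{r} - \mathbf{s}) \in (\Z/\ell\Z)^d$, where $M^\dagger : (\Z/\ell\Z)^n \to (\Z/\ell\Z)^d$ is the mod-$\ell$ reduction of $M^T$. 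Under the identification $(\Z/\ell\Z)^d \simeq \hat\Gamma$, the $\Gamma$-invariants are exactly the degree-zero subspace.

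Next I would introduce the $\ell^n$-dimensional $\Gamma$-module $V = \C^{(\Z/\ell\Z)^n}$ in which the standard basis vector $e_{\mathbf{t}}$ has $\Gamma$-weight $M^\dagger(\mathbf{t}) \in \hat\Gamma$, and identify $\Mat(\ell^n) = \End_\C(V)$ so that $\bigotimes_i \elm^i_{r_i, s_i}$ sends $e_{\mathbf{s}}$ to $e_{\mathbf{r}}$. The conjugation action of $\Gamma$ on $\End_\C(V)$ coming from this $\Gamma$-module structure then agrees with the grading above, so $(\D_\lambda)^\Gamma = \End_\Gamma(V)$. Decomposing $V = \bigoplus_{\chi \in \hat\Gamma} V_\chi$ into isotypic components, with $V_\chi$ the span of $\{e_{\mathbf{t}} : M^\dagger(\mathbf{t}) = \chi\}$, Schur's lemma for the abelian group $\Gamma$ gives $\End_\Gamma(V) = \bigoplus_\chi \End_\C(V_\chi)$.

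To finish, I would invoke Lemma \ref{prop:rankker}: the kernel of $M^\dagger$ is free of rank $n-d$ and hence has cardinality $\ell^{n-d}$, and a counting argument forces $M^\dagger$ to be surjective onto $(\Z/\ell\Z)^d$. Consequently every fiber of $M^\dagger$ has size $\ell^{n-d}$, so $\dim_\C V_\chi = \ell^{n-d}$ for each of the $\ell^d$ characters $\chi \in \hat\Gamma$, yielding
$$(\D_\lambda)^\Gamma \simeq \bigoplus_{\chi \in \hat\Gamma} \End_\C(V_\chi) \simeq \Mat(\ell^{n-d})^{\oplus \ell^d}.$$
The only non-formal step is the identification of the $\Gamma$-grading on $\Mat(\ell^n)$ with $M^\dagger$, which is essentially recorded in Corollary \ref{lem:matalpha} and the construction underlying Corollary \ref{cor:deformmatrix}; modulo this bookkeeping the argument is pure character theory for the finite abelian group $\Gamma$.
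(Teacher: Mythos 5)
Your proof is correct, and it reaches the conclusion by a genuinely different route from the paper. Both arguments start from the same input: the isomorphism $\D_\lambda \simeq \Mat(\ell^n)$ of Theorem \ref{thm:Dqnazumaya} together with the grading $\deg(\elm_{\mathbf r,\mathbf s}) = M^\dagger(\mathbf r - \mathbf s)$, and Lemma \ref{prop:rankker} giving $\lvert \ker M^\dagger\rvert = \ell^{n-d}$. From there the paper proceeds combinatorially: it marks the positions $(\mathbf r,\mathbf s)$ with $M^\dagger(\mathbf r - \mathbf s)=0$ in an $\ell^n\times\ell^n$ grid, checks that every row and column contains exactly $\ell^{n-d}$ marked entries and that the marked set is closed under composition of indices (if $(\mathbf r,\mathbf s)$ and $(\mathbf s,\mathbf t)$ are marked then so is $(\mathbf r,\mathbf t)$), and then permutes rows and columns to exhibit a block-diagonal form with $\ell^d$ blocks of size $\ell^{n-d}$. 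You instead realize $\Mat(\ell^n)$ as $\End_\C(V)$ for the $\Gamma$-module $V$ with $e_{\mathbf t}$ of weight $M^\dagger(\mathbf t)$, so that the invariants become $\End_\Gamma(V)$, and conclude by Schur's lemma for the finite abelian group $\Gamma$ plus the observation that $M^\dagger$ is surjective with all fibers of size $\ell^{n-d}$ (your counting argument for surjectivity is correct and is genuinely needed, since otherwise some isotypic component would vanish and the number of blocks would be wrong). The two decompositions are literally the same: the paper's diagonal blocks are indexed by the cosets of $\ker M^\dagger$ in $(\Z/\ell\Z)^n$, which are exactly your isotypic components $V_\chi$. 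What your packaging buys is the elimination of the grid-permutation step and the closure property in favor of standard character theory, which makes the block structure transparent; what the paper's version buys is a self-contained elementary argument whose grid formalism is then reused almost verbatim in the proof of Proposition \ref{prop:fiberHamiltonian} to handle the left ideals $I$ and $J$, where the objects being decomposed are no longer full endomorphism algebras. The only point you should make fully explicit is that the $\Gamma$-action on $\Mat(\ell^n)$ is the conjugation action attached to the diagonal representation of $\Gamma$ on $V$, so that the degree-zero subalgebra really is $\End_\Gamma(V)$; since the grading is induced by the function $\mathbf t \mapsto M^\dagger(\mathbf t)$ on indices, this is immediate, and your citations of Corollary \ref{cor:deformmatrix} and Corollary \ref{lem:matalpha} cover the bookkeeping.
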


\begin{proof}  Theorem \ref{thm:Dqnazumaya} gives an isomorphism $\D_\lambda \simeq \Mat(\ell^n)$, and so we have an action of $\Gamma$ on $\Mat(\ell^n)$. Adopt the labeling conventions introduced at the end of Section \ref{subsec:Dqn:azumaya}. The $j$-th generator $q_j \in \Gamma \simeq (\Z/\ell\Z)^d$ acts on $\elm_{\mathbf r, \mathbf s}$ by the scalar $q^{M^\dagger(\mathbf r - \mathbf s)_j}$, so the invariants $\Mat(\ell^n)^\Gamma$ is the subalgebra spanned by the elementary matrices $\{\elm_{\mathbf r, \mathbf s}\}$ such that $M^\dagger (\mathbf r - \mathbf s) = 0$. 

Consider a $\ell^n \times \ell^n$ grid where a star `$*$' occupies each of the squares whose coordinates correspond to an elementary matrix that lies in $\Mat(\ell^n)^\Gamma$, and a zero occupies each of the remaining squares. Then $\Mat(\ell^n)^\Gamma$ is the subspace of all matrices formed by replacing each of the stars with a complex  number, and keeping the zeros in place. The column indices of the starred entries in the $\mathbf r$-th row are in bijection with the kernel of $M^\dagger$:  $$\{ \mathbf s \in (\Z/\ell\Z)^{n} \ | \ M^\dagger( \mathbf r - \mathbf s) = 0 \} = \{ \mathbf s' \in (\Z/\ell\Z)^{n} \ | \ M^\dagger\mathbf s'= 0 \} = \text{ker}(M^\dagger).$$ 
The same statement holds for the starred entries in the $\mathbf r$-th column. The first fact below now follows from Lemma \ref{prop:rankker}, while the second is a simple computation.
\begin{equation}\label{lem:numrowcol} \text{Each row and each column in the grid contains $\ell^{n-d}$ starred entries.} \end{equation}
\begin{equation} \label{lem:enclose} \text{If there is a star in entries $(\mathbf r, \mathbf s)$ and $(\mathbf s, \mathbf t)$, then there is a star in entry $(\mathbf r, \mathbf t)$.}\end{equation}

In the grid, permute the rows so that the $\ell^{n-d}$ starred entries of the first column appear in the first $\ell^{n-d}$ entries. From here, permute the columns so that the first $\ell^{n-d}$ entries of the first row are starred; it is possible to perform such a permutation so that first row stays fixed.  Now the starred entries in the first row and first column enclose a block of size $\ell^{n-d} \times \ell^{n-d}$. By statement \ref{lem:enclose}, this block must be filled in with stars. By statement \ref{lem:numrowcol}, this exhausts the stars in the first  $\ell^{n-d}$ rows and columns. Continue in this way to form  $\ell^{d}$ blocks, each of size $ \ell^{n-d}$. \end{proof}

\begin{comment}
\begin{cor}\label{cor:diagnm} The diagonal squares $(\mathbf n, \mathbf n)$ and $(\mathbf m, \mathbf m)$ of the grid become part of the same matrix algebra if and only if $\Phi  ( \mathbf n - \mathbf m) = 0$. \end{cor}
\end{comment}

Fix $\eta \in K$. Let $\lambda \in X$ and $\gamma \in T$ be such that $\phi^\dagger(\gamma) = \eta$. Then the pairs $P = (\lambda, \eta)$ and $P' =(\lambda, \gamma)$ satisfy $\eta_j^\ell = \prod_{i=1}^n (1 + \lambda_i \lambda_{i^\vee})^{m_{ij}}$ and  $\gamma_i^\ell = 1 + \lambda_i \lambda_{i^\vee}.$ Set 
$$\D_{P} = \D_\lambda/ \D_\lambda(\overline{\mu_K} - \eta) \quad \text{and} \quad \D_{P'} = \D_\lambda/ \D_\lambda(\overline{ \mu_T} - \gamma), $$ where $\overline{\mu_T}$ denotes the composition $\O(T) \stackrel{\mu_{q,K}}{\longrightarrow} \Dqn^\circ \rightarrow \D_\lambda$ and $\overline{\mu_K} = \overline{\mu_T} \circ \phi^{\dagger, \#}$. Each of $\D_P$ and $\D_{P'}$ carries an action of the finite group $\Gamma$ and there is a natural surjection $(\D_{P})^\Gamma \twoheadrightarrow (\D_{P'})^\Gamma$. Observe that $\D_{P}$ is the fiber over $\lambda$ of the sheaf $\Dqn/ \Dqn(\mu_{q,K} - \eta)$ on $X = (\mu_{\Kell})\inv(\eta^\ell) $. 

\begin{rmk} Recall the algebra $S = \Zln^\circ \otimes_{\O(\Tell)} \O(T)$ from Section \ref{subsec:Dqn:etale}. Let $S_K =$ $\Zln^\circ \otimes_{\O(\Kell)} \O(K)$. These are naturally commutative (but not central) subalgebras of $\D=$ $ \Dqn^\circ$. Then $P' = (\lambda, \gamma)$ defines a point in $\Spec(S)$ and $P=(\lambda, \eta)$ defines a point in $\Spec(S_K)$; the latter is the image of $P'$ under the natural map $\Spec(S) \rightarrow \Spec(S_K)$. Observe that  $\D_{P} = \D \otimes_{S_K} k(P)$ and $\D_{P'} = \D \otimes_{S} k(P')$ are the fibers of $\D$ over $P$ and $P'$, respectively. \end{rmk}

The following proposition gives a description of fiberwise Hamiltonian reduction, and is the key technical result of this section.

\begin{prop}\label{prop:fiberHamiltonian} The algebra structure on $\Dqn$ descends the $\Gamma$-invariants $(\D_{P})^\Gamma$. There is an isomorphism of algebras: $(\D_{P})^\Gamma \simeq \Mat(\ell^{n-d}).$ The $\Gamma$-invariants $(\D_{P'})^\Gamma$ of the fiber of $\D_{P'}$ is a vector space of dimension $\ell^{n-d}$ over $\C$. Hence, the surjection $(\D_{P})^\Gamma \twoheadrightarrow (\D_{P'})^\Gamma$ induces an isomorphism $$(\D_{P})^\Gamma \stackrel{\sim}{\longrightarrow} \End_\C( (\D_{P'})^\Gamma).$$\end{prop}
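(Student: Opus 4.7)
The plan is to carry out everything by explicit matrix calculations, using the identification $\D_\lambda \simeq \Mat(\ell^n)$ from Theorem \ref{thm:Dqnazumaya}, the formula for the $\alpha_i$ in Corollary \ref{lem:matalpha}, and the block decomposition of $(\D_\lambda)^\Gamma$ established in the proof of Proposition \ref{prop:gammainvariants}. For the algebra structure on $(\D_P)^\Gamma$, I would first observe that $\overline{\mu_K}(z_j) = \prod_i \alpha_i^{m_{ij}}$ is automatically $K$-invariant, since each Euler operator $\alpha_i$ carries $K$-weight zero. Because $z_j$ is a grouplike element of the Hopf algebra $\O(K)$, the quantum moment map identity of Definition \ref{def:qmm} specializes to $\overline{\mu_K}(z_j)\cdot c = c\cdot \overline{\mu_K}(z_j)$ for every $\Gamma$-invariant $c \in \D_\lambda$. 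Hence the elements $\overline{\mu_K}(z_j) - \eta_j$ are central in $(\D_\lambda)^\Gamma$, the left ideal they generate is two-sided inside $(\D_\lambda)^\Gamma$, and $(\D_P)^\Gamma \cong (\D_\lambda)^\Gamma / \langle \overline{\mu_K}(z_j) - \eta_j \rangle$ inherits an algebra structure (a concrete instance of Proposition \ref{prop:qhamred}).

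To identify this quotient, I would apply Corollary \ref{lem:matalpha}: under $\D_\lambda \simeq \Mat(\ell^n)$ the operator $\alpha_i$ is diagonal with $(\mathbf r,\mathbf r)$-entry $\gamma_i q^{-2r_i}$, so $\overline{\mu_K}(z_j)$ is diagonal with $(\mathbf r,\mathbf r)$-entry $\bigl(\prod_i \gamma_i^{m_{ij}}\bigr) q^{-2(M^\dagger \mathbf r)_j} = \eta_j\, q^{-2(M^\dagger \mathbf r)_j}$, using $\phi^\dagger(\gamma) = \eta$. The proof of Proposition \ref{prop:gammainvariants} expresses $(\D_\lambda)^\Gamma$ as a sum of $\ell^d$ matrix blocks indexed by the fibers of $M^\dagger \colon (\Z/\ell\Z)^n \to (\Z/\ell\Z)^d$, and on the block labelled by $\mathbf v$ (in the image of $M^\dagger$) each $\overline{\mu_K}(z_j) - \eta_j$ acts as the scalar $\eta_j(q^{-2v_j} - 1)$. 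These scalars vanish simultaneously for all $j$ precisely when $\mathbf v = 0$, so the two-sided ideal they generate is the sum of all blocks with $\mathbf v \neq 0$, and $(\D_P)^\Gamma$ is exactly the $\mathbf v = 0$ block, namely $\Mat(\ell^{n-d})$ with rows and columns indexed by $\ker M^\dagger$ (of cardinality $\ell^{n-d}$ by Lemma \ref{prop:rankker}).

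The parallel calculation for $P' = (\lambda, \gamma)$ is simpler: $\overline{\mu_T}(y_i) - \gamma_i = \alpha_i - \gamma_i$ is diagonal with $(\mathbf r,\mathbf r)$-entry $\gamma_i(q^{-2r_i} - 1)$, which vanishes iff $r_i \equiv 0 \pmod \ell$. Hence $\D_\lambda(\overline{\mu_T} - \gamma)$ consists of matrices whose columns are supported off $\mathbf r = 0$, so $\D_{P'}$ has basis $\{\elm_{\mathbf r,0}\}_{\mathbf r \in (\Z/\ell\Z)^n}$ as a vector space. The $\Gamma$-action scales $\elm_{\mathbf r, 0}$ by the character $\mathbf r \mapsto M^\dagger \mathbf r$, so $(\D_{P'})^\Gamma$ has basis $\{\elm_{\mathbf r,0} : \mathbf r \in \ker M^\dagger\}$ and dimension $\ell^{n-d}$ as claimed.

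For the concluding isomorphism, the identity $\overline{\mu_K}(z_j) = \prod_i \overline{\mu_T}(y_i)^{m_{ij}}$ and $\phi^\dagger(\gamma) = \eta$ give $\D_\lambda(\overline{\mu_K} - \eta) \subseteq \D_\lambda(\overline{\mu_T} - \gamma)$, producing a $\D_\lambda$-linear surjection $\D_P \twoheadrightarrow \D_{P'}$; this remains surjective on $\Gamma$-invariants since $|\Gamma|$ is invertible in $\C$. The left $\D_\lambda$-action on $\D_{P'}$ factors through $(\D_P)^\Gamma$ on $\Gamma$-invariants (because $\langle \overline{\mu_K} - \eta\rangle$ annihilates $\D_{P'}$), giving the algebra homomorphism $(\D_P)^\Gamma \to \End_\C((\D_{P'})^\Gamma)$. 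Since $(\D_P)^\Gamma \simeq \Mat(\ell^{n-d})$ is simple and the action is nontrivial, the map is injective, and a dimension count ($\ell^{2(n-d)}$ on both sides) forces it to be an isomorphism. The main obstacle is keeping the several ideal/quotient/invariant manipulations aligned; once one observes that every moment map generator is already diagonal (hence constant on each block of the $\Gamma$-block decomposition of $\Mat(\ell^n)$), the result falls out cleanly.
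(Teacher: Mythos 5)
Your proposal is correct and follows essentially the same route as the paper's proof: identify $\D_\lambda$ with $\Mat(\ell^n)$ via Theorem \ref{thm:Dqnazumaya}, use Corollary \ref{lem:matalpha} to see that the moment-map generators are diagonal and constant on each block of the decomposition of $(\D_\lambda)^\Gamma$ from Proposition \ref{prop:gammainvariants}, and conclude by exactness of $\Gamma$-invariants together with simplicity of $\Mat(\ell^{n-d})$ and a dimension count. The only differences are presentational: you read off the quotient from the scalar action on blocks where the paper explicitly computes $J^\Gamma$ and $I^\Gamma$ as sums of blocks, and your parenthetical that $\langle\overline{\mu_K}-\eta\rangle$ ``annihilates $\D_{P'}$'' should more precisely say that the generators annihilate $(\D_{P'})^\Gamma$ because they commute with $\Gamma$-invariant elements, so that $(\overline{\mu_K}(z_j)-\eta_j)d = d(\overline{\mu_K}(z_j)-\eta_j)$ lands in the left ideal defining $\D_{P'}$.
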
   

\begin{proof} Under the isomorphism $\D_\lambda \simeq \Mat(\ell^n)$, the left ideals of $\D_\lambda$ generated by $\{\overline{\mu_{K}}(z_j) - \eta_j\}_{j = 1}^d$ and $\{\overline{\mu_{T}}(y_i) - \gamma_i\}_{i=1}^n$ correspond, respectively, to left ideals  $J$ and $I$ of $\Mat(\ell^n)$. We have  $\D_{P} \simeq \Mat(\ell^n)/ J$ and $\D_{P'} \simeq \Mat(\ell^n)/I.$ Recall from above the fiberwise quantum moment map $\mu_{\Gamma} : \C[\Gamma] \rightarrow \Mat(\ell^n).$ The ideal $J$ of $\Mat(\ell^n)$ is the same as the left ideal generated by $\{\mu_{\Gamma}(q_j) - \eta_j\}_{j=1}^d$. By Proposition \ref{prop:qhamred}, there is an induced algebra structure on $(\D_{P'})^\Gamma \simeq (\Mat(\ell^n)/ J)^\Gamma.$ It suffices to prove that there is an isomorphism of algebras $ \left( \Mat(\ell^n) / J \right)^{\Gamma} \simeq \Mat(\ell^{n-d}) $ and that the dimension of $(\Mat(\ell^n) / I)^{\Gamma}$ over $\C$ is $\ell^{n-d}$. 

By Lemma \ref{lem:matalpha}, $\overline{ \mu_K}(z_j)$ and $\overline{ \mu_T}(y_i)$ each correspond to the diagonal matrix whose $(\mathbf r, \mathbf r)$-entry is $\eta_j q^{-2(M^\dagger\mathbf r)_j}$ and $\gamma_i q^{-2r_i}$, respectively. Thus, the $\mathbf r$-th diagonal entry of $\overline{ \mu_K}(z_j) -\eta_j$ is 0 for all $j = 1, \dots, d$ if and only if $\mathbf r \in \ker(M^\dagger)$. It follows that the left ideal $J$ consists of all matrices in $\Mat(\ell^n)$ whose $\mathbf r$-th column is zero whenever $M^\dagger \mathbf r = 0$. In parallel to the discussion of $\Mat(\ell^n)^\Gamma$ in the proof of Proposition \ref{prop:gammainvariants}, to $J^\Gamma$ we attach a $\ell^n \times \ell^n$ grid with a star in entry $(\mathbf r, \mathbf s)$ if and only if $M^\dagger(\mathbf r - \mathbf s) = 0$ and $M^\dagger\mathbf r \neq 0$. After rearranging the rows and columns as in the last step of the proof of Proposition \ref{prop:gammainvariants}, the stars again form a block diagonal form, except one of the `blocks' is actually zero. Thus $J^\Gamma \stackrel{\sim}{\longrightarrow} \Mat(\ell^{n-d})^{\oplus \ell^{d -1}}$. Since taking $\Gamma$-invariants is exact, we have that $$\left( \Mat(\ell^n) / J \right)^{\Gamma} = \Mat(\ell^n)^\Gamma/ J^\Gamma \simeq \Mat(\ell^{n-d}) ^{\oplus \ell^{d}}/ \Mat(\ell^{n-d}) ^{\oplus \ell^{d-1}}  \simeq \Mat(\ell^{n-d}) $$ 

Similarly, the $\mathbf r$-th diagonal entry of $\overline{ \mu_T}(y_i) -\gamma_i$ is 0 for all $i=1,\dots,n$ if and only if $\mathbf r = \mathbf 0$. It follows that the left ideal $I$ consists of all matrices in $\Mat(\ell^n)$ whose $\mathbf 0$-th column (i.e.\ left-most column) is zero. The $\ell^n \times \ell^n$ grid attached to $I^\Gamma$ has a star in entry $(\mathbf r, \mathbf s)$ if and only if $M^\dagger(\mathbf r - \mathbf s) = 0$ and $\mathbf r \neq 0$. After rearranging the rows and columns, the stars again form a block diagonal form, except the left-most column of the first block is zero. The result that $\dim_\C( (\Mat(\ell^n) / I)^{\Gamma}) = \ell^{n-d}$ now follows from the property that taking $\Gamma$-invariants is exact.\end{proof}

\begin{proof}[Proof of Theorem \ref{thm:Dqkazumaya}] Let $\lambda \in X^{ss}$ and $i: O(\lambda) \hookrightarrow X^{ss}$ be the inclusion of the orbit of the semistable point $\lambda$. The definition of $\Dqk$ implies that the stalk at $\pi(\lambda)$ is isomorphic (as an algebra) to $\Gamma(O(\lambda), i^*(\D/\D(\mu_{q,K} - \eta))\vert_{X^{ss}})^K$. Since the action of $\Kell $ on $O(\lambda)$ is free and transitive, the orbit $O(\lambda)$ is a homogeneous space for $K$ with stabilizer $\Gamma= \text{ker}(K \rightarrow \Kell)$ at the point $\lambda$. Consequently, the $K$-invariant global sections of a $K$-equivariant sheaf on $O(\lambda)$ can be identified  with the $\Gamma$-invariants of the fiber of the sheaf $\Dqn/ \Dqn(\mu_{q,K} - \eta)$ over $\lambda$. This fiber is precisely $\D_{P}$ from above. The result follows from Proposition \ref{prop:fiberHamiltonian}. \end{proof}

\begin{rmk} We make the following remarks:
\begin{enumerate}
 \item The fact that the quantum Hamiltonian reduction of a matrix algebra is again a matrix algebra has been observed in \cite[Proposition  1.5.2]{VV}. However, the rank of the resulting matrix algebra depends on the specific setting; hence our computations in the proofs of Propositions \ref{prop:gammainvariants} and \ref{prop:fiberHamiltonian}. 

 \item  Throughout this section, we have assumed that $\mathcal K$ is smooth, i.e.\ that $\Kell$ acts freely on the semistable points of $X$. This assumption can be weakened as follows. Suppose $\Kell$ acts freely on the stable points $X^\text{s}$ of $X$. Then $\Dqk$ restricts to an Azumaya algebra over the image $\pi(X^{\text{s}})$ of $X^\text{s}$ under the map $\pi: X^{ss} \rightarrow \Xk\eell$. Note that, in this case, $\pi(X^\text{s})$ is a smooth subvariety of $\Xkl$.  
\end{enumerate}
\end{rmk}

\subsection{An \'etale splitting}\label{subsec:Dqk:etale}
\newcommand{\Yk}{Y(\mathcal K)}
\newcommand{\Ykc}{Y(\mathcal K)^\circ}

Suppose $\mathcal K$ is smooth. Recall the moment map $\mu_H : \Xkl \rightarrow \Hell$ from Proposition \ref{prop:frobenius}.

\begin{definition} Let $\Yk = \Xkl \times_{\Hell} H$, so $\Yk$ fits into a Cartesian square:  \begin{equation}\label{eq:sprimez} \xymatrix{  \Yk \ar[r]^{f_\mathcal K} \ar[d]_{\mu_{Y,H}} & \Xkl \ar[d]_{\mu_H} \\  H \ar[r]^{  }& \Hell. }\end{equation} \end{definition}

Since $f_\mathcal K$ is the base change of the \'etale morphism $H \rightarrow \Hell$, it is \'etale of the same degree, namely $\ell^{n-d}$. We devote the remainder of this section to the proof of the following result. 

\begin{theorem}\label{thm:DAetale} The \'etale cover $f_\mathcal K :$  $\Yk\rightarrow \Xkl$ splits the Azumaya algebra $\Dqk$ on $\Xkl$. Moreover, $\Dqk$ is split over fibers of the moment map $\mu_H$. \end{theorem}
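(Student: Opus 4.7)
Plan: My approach will be to lift the \'etale splitting of $\Dqn^\circ$ from Theorem \ref{thm:Dqnetale} through the $\Kell$-Hamiltonian reduction defining $\Xkl$, using the fiberwise computation of Proposition \ref{prop:fiberHamiltonian} as the local model.

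First, I will construct the cover at the level before quotienting. The moment map $\mu_{\Tell}\vert_X : X \to \Tell$ factors through the $\Hell$-coset $\phi^{\dagger,(\ell),-1}(\eta^\ell)$, so pulling the cover $\Spec(S) \to \Spec(\Zln^\circ)$ back to $X$ yields a degree $\ell^n$ cover that decomposes according to lifts of $\eta$ along $\phi^\dagger$. The ``correct'' component $\tilde X \subset \Spec(S)\vert_X$, cut out by the $d$ additional equations $\mu_{q,K}(z_j) = \eta_j$, will be a $\Kell$-equivariant \'etale cover of $X$ of degree $\ell^{n-d}$, naturally isomorphic to $X \times_{\Hell} H$. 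Since GIT commutes with flat base change, restricting to the semistable locus and forming the $\Kell$-GIT quotient will give $\tilde X^{\text{ss}}/\!\!/ \Kell \cong \Yk$ covering $f_{\mathcal K}$.

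Next, I will define a splitting bundle by reducing a suitable $\D$-module along this cover. Setting $\D := \Dqn^\circ$, let $\tilde{\mathcal F}$ denote the sheaf on $\tilde X$ associated to $\D \otimes_S \O(\tilde X)$, with $\D$ regarded as a right $S$-module via the inclusion $S \subset \D$. The left $\D$-action and $\Kell$-equivariance of $\tilde{\mathcal F}$ will descend to $\mathcal F := \tilde{\pi}_\ast(\tilde{\mathcal F}\vert_{\tilde X^{\text{ss}}})^{\Kell}$ on $\Yk$, where $\tilde{\pi} : \tilde X^{\text{ss}} \to \Yk$ is the GIT quotient map. Because $\mathcal K$ is smooth, $\Kell$ will act freely on $\tilde X^{\text{ss}}$, making $\tilde{\pi}$ a principal $\Kell$-bundle; hence the fiber of $\mathcal F$ at $(\pi(\lambda), \gamma_H) \in \Yk$ is the $\Gamma$-invariants of the fiber of $\tilde{\mathcal F}$ at the corresponding $P' = (\lambda, \gamma) \in \tilde X^{\text{ss}}$, namely $(\D_{P'})^\Gamma$. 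By Proposition \ref{prop:fiberHamiltonian}, this space has dimension $\ell^{n-d}$, so $\mathcal F$ is locally free of that rank. The natural left $\D$-action then yields an $\O(\Yk)$-algebra map $\Phi : f_{\mathcal K}^\ast \Dqk \to {\mathcal E}nd_{\O(\Yk)}(\mathcal F)$; both source and target are locally free of rank $\ell^{2(n-d)}$ (the source by Theorem \ref{thm:Dqkazumaya}), and fiberwise $\Phi$ reduces to the isomorphism $(\D_P)^\Gamma \xrightarrow{\sim} \End_{\C}((\D_{P'})^\Gamma)$ of Proposition \ref{prop:fiberHamiltonian}. Consequently, $\Phi$ will be a global isomorphism, establishing that $f_{\mathcal K}$ is an \'etale splitting.

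The second assertion of the theorem will then follow immediately from Proposition \ref{prop:AzumayaCartesian}, applied to the Cartesian square \eqref{eq:sprimez} with $\Dqk$ the Azumaya algebra, $f_{\mathcal K}$ the \'etale splitting just constructed, and $\mu_H$ the base map. The main obstacle is the technical verification that $\mathcal F$ genuinely assembles from the fiberwise $\Gamma$-invariants into a locally free sheaf (requiring freeness of the $\Kell$-action together with compatibility of $\Kell$-invariants under base change along $\tilde{\pi}$), and that the fiberwise matrix-algebra isomorphism of Proposition \ref{prop:fiberHamiltonian} globalizes coherently to give $\Phi$.
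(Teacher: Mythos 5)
Your proposal is correct and follows essentially the same route as the paper: the cover $\tilde X$ you construct is the paper's $Y=\mu_{S,K}\inv(\eta)\simeq X\times_{\Hell}H$ (Proposition \ref{prop:YKstuff}), your splitting bundle $\mathcal F$ is the paper's $\Dqyk=\pi_{Y*}(\widetilde{\Dqn}\vert_{Y^{ss}})^K$ shown locally free in Lemma \ref{lem:dyalocfree}, and the left-multiplication map checked fiberwise against Proposition \ref{prop:fiberHamiltonian} is exactly the paper's $\Xi$, with the second assertion likewise obtained from Proposition \ref{prop:AzumayaCartesian}.
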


Recall the variety $\Spec(S) = \Spec(\Zln^\circ) \times_{\Tell} T$ from Section \ref{subsec:Dqn:etale} and the projection map $\mu_S :$ $\Spec(S) \rightarrow T$. Let $\mu_{S,K} = \phi^\dagger \circ \mu_S$. The variety $\Spec(S)$ acquires an action of $\Kell$ through the action of $\Kell$ on $\Spec(\Zln)$.

\begin{prop}\label{prop:YKstuff} The variety $\Yk$ is isomorphic to the GIT quotient $\mu_{S,K}\inv(\eta)  \text{ $\!$/$\! \!$/$\!$}_{\chi\eell} \Kell$, and admits an open cover by varieties of the form $\Spec(\O(Y)[r\inv]^{\Kell})$ for $r \in \O(X)_{(\chi\eell)^n}$, and $$f_\mathcal K\inv\left(\Spec\left(\O(X)[r\inv]^{\Kell}\right) \right) = \Spec\left(\O(Y)[r\inv]^{\Kell}\right).$$ 
\end{prop}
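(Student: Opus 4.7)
The plan is to identify $\mu_{S,K}\inv(\eta)$ with an explicit $\Kell$-equivariant fiber product and then pass to the GIT quotient, exploiting the fact that $\Kell$ acts trivially on the second factor so the quotient commutes with that base change.

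First, I would unpack $\mu_{S,K}\inv(\eta)$ using $\Spec(S) = \Spec(\Zln^\circ) \times_{\Tell} T$ together with $\mu_{S,K} = \phi^\dagger \circ \mu_S$. A closed point $(\lambda, t) \in \mu_{S,K}\inv(\eta)$ then satisfies $\mu_{\Tell}(\lambda) = t^\ell$ and $\phi^\dagger(t) = \eta$; raising to the $\ell$-th power forces $\mu_{\Kell}(\lambda) = \eta^\ell$. Since $H = T/K$ is a torus, $\phi^\dagger$ is surjective, so I can fix a lift $\tilde\eta \in T$ of $\eta$. Translation by $\tilde\eta$ identifies $\phi^{\dagger,-1}(\eta)$ with $\ker(\phi^\dagger) = \psi^\dagger(H) \simeq H$, and translation by $\tilde\eta^\ell$ identifies the analogous fiber in $\Tell$ with $\Hell$, intertwining the $\ell$-th power map of cosets with the Frobenius $H \to \Hell$. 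By Proposition \ref{prop:mmfactors}, after this translation $\mu_{\Tell}\big|_{\mu_{\Kell}\inv(\eta^\ell)}$ is $\Kell$-invariant and descends to the moment map $\mu_H : \Xkl \to \Hell$. Combining these observations yields a $\Kell$-equivariant isomorphism
$$\mu_{S,K}\inv(\eta) \;\simeq\; \mu_{\Kell}\inv(\eta^\ell) \times_{\Hell} H,$$
with $\Kell$ acting only on the first factor.

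Next, I would pass to GIT quotients. Because $\Kell$ acts trivially on $H$ and $H \to \Hell$ is \'etale (hence flat), $\chi\eell$-semistability of $(\lambda, t)$ depends only on $\lambda$, and $\Kell$-invariants commute with the flat base change $-\otimes_{\O(\Hell)}\O(H)$ on local charts; therefore $\mu_{S,K}\inv(\eta) /\!/_{\chi\eell} \Kell \simeq \Xkl \times_{\Hell} H = \Yk$, proving the first assertion. For the remaining statements, the inclusion $\Zln^\circ \hookrightarrow S$ together with the identity $\phi^{\dagger,\#}(z_j)^\ell = \mu_{\Kell}^\#(z_j^\ell)$ in $S$ (a consequence of $\alpha_i^\ell = 1 \otimes y_i^\ell$ in $S$) gives $(\mu_{\Kell}^\# - \eta^\ell)S \subseteq (\mu_{S,K}^\# - \eta)S$, and hence a ring map $\O(X) \to \O(Y)$ under which any $r \in \O(X)_{(\chi\eell)^n}$ descends to a $(\chi\eell)^n$-invariant in $\O(Y)$. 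By the Cartesian square \eqref{eq:sprimez}, $f_\mathcal K\inv(U_r) = U_r \times_{\Hell} H$, whose algebra of functions is $\O(X)[r\inv]^{\Kell} \otimes_{\O(\Hell)} \O(H)$; flatness of $\O(H)$ over $\O(\Hell)$ together with the isomorphism $\O(Y) \simeq \O(X) \otimes_{\O(\Hell)} \O(H)$ (extracted from the tensor-product description of $S$ as in the first part) identifies this with $\O(Y)[r\inv]^{\Kell}$. Since the $U_r$ cover $\Xkl$, the $f_\mathcal K\inv(U_r)$ cover $\Yk$.

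The main obstacle is the bookkeeping in the first step: verifying that the non-canonical translation identifications of the cosets $\phi^{\dagger,-1}(\eta)$ and its Frobenius-twisted analogue with $H$ and $\Hell$ intertwine the Frobenius, the moment maps, and the $\Kell$-actions consistently, and extracting the clean tensor-product description $\O(Y) \simeq \O(X) \otimes_{\O(\Hell)} \O(H)$ directly from $S = \Zln^\circ \otimes_{\O(\Tell)} \O(T)$. The fact that $\Kell$-invariants commute with flat base change along a trivial-action factor is standard but should still be checked on each affine chart $U_r$.
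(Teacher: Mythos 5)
Your proposal is correct and follows essentially the same route as the paper: both identify $\mu_{S,K}\inv(\eta)$ with the fiber product $X \times_{\Hell} H$ via the induced moment maps of Proposition \ref{prop:mmfactors}, observe that $\Kell$ acts only on the $X$-factor, and then descend chart by chart through the Cartesian square relating $\Spec(\O(Y)[r\inv]^{\Kell})$ and $\Spec(\O(X)[r\inv]^{\Kell})$ over $H \rightarrow \Hell$. The paper simply makes the base-change argument concrete by writing the explicit presentation $\O(Y) \simeq \O(X)[\{y_i^{\pm 1}\}]/(y_i^\ell - \alpha_i^\ell, \prod_i y_i^{m_{ij}} - \eta_j)$, which is the same content as your flatness-and-invariants bookkeeping.
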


\begin{proof} Let $Y := \mu_{S,K}\inv(\eta)$ and let $f: Y \rightarrow X$ be the restriction of the map induced by the inclusion $\Zln^\circ \hookrightarrow S$. Observe that there are induced torus-valued moment maps $Y \rightarrow H$ and $X \rightarrow \Hell$. Moreover, $Y  = X \times_{\Hell} H$. The following description of the algebras of functions on $X$ and $Y$ justifies the last assertion: \begin{equation}\label{eq:oxoy} \O(X) \simeq \Zln^\circ/ (\mu_{\Kell}^\# - \eta^\ell), \quad \text{and} \quad \O(Y) \simeq \O(X)[\{y_i^{\pm 1}\}]/ ( y_i^\ell - \alpha_i^\ell, \prod_{i=1}^n y_i^{m_{ij}} - \eta_j). \end{equation} 
The action of $\Kell$ on $\Spec(S)$ is given in terms of the action of $\Kell$ on $\Zln$: $k \cdot (\lambda, \gamma) = (k\cdot\lambda, \gamma)$. It follows that there is a surjective map $X^{ss} \rightarrow Y^{ss}$ and that the  morphism  $f : Y \rightarrow X$ induces a surjection $f_\mathcal K : Y(\mathcal K) \rightarrow \Xk\eell$ that is given locally by $\Spec(\oyr^{\Kell}) \rightarrow \Spec(\oxr^{\Kell})$ for $r \in \O(X)_{(\chi\ell)^n}.$ For any such $r$, it is easy to verify that the following square is Cartesian, where the bottom horizontal map is the $\ell$-th power map, and the left and right vertical maps are induced moment maps,   \begin{equation} \label{diag:fAH} \xymatrix{ \Spec(\oyr^{\Kell}) \ar[rr] \ar[d] && \Spec(\oxr^{\Kell}) \ar[d] \\  H \ar[rr]^{ \Frl }&& \Hell. }\end{equation} The first claim follows. The second statement is an immediate  corollary. 
\end{proof}
 
Let $\pi_Y : Y^{ss} \rightarrow Y(\mathcal K)$ be the quotient map. The algebra $\Dqn^\circ$, regarded as an $S$-module, defines a sheaf $\widetilde{\Dqn^\circ}$ on $\Spec(S)$.

\begin{lemma}\label{lem:dyalocfree}  The sheaf $\Dqyk := \pi_{Y*}( \widetilde{\Dqn} \vert_{Y^{ss}})^K$ is locally free over $\Yk$. \end{lemma}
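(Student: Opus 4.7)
The plan is to verify local freeness of $\Dqyk$ on the affine cover $\{U_r = \Spec(\oyr^{\Kell})\}$ of $\Yk$ furnished by Proposition \ref{prop:YKstuff}. Over each such $U_r$ one has
\[
\Gamma(U_r, \Dqyk) \;=\; \bigl(\Dqn^\circ \otimes_S \oyr\bigr)^K,
\]
and the target ring is $\oyr^K = \oyr^{\Kell}$. Writing $M := \Dqn^\circ \otimes_S \oyr$, the task reduces to showing that $M^K$ is a finitely generated projective $\oyr^{\Kell}$-module.

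First I would separate the $K$-action into its actions via $\Gamma := \ker(K \to \Kell)$ and via $\Kell = K/\Gamma$. By Proposition \ref{lem:etalestuff}(2), $S$ is generated over $\Zln^\circ$ by the $K$-invariant Euler operators $\alpha_i$, so the $K$-action on $S$ -- hence on $\oyr$ -- factors through $\Kell$, and the subgroup $\Gamma$ acts on $M$ by $\oyr$-linear endomorphisms. Since $\Dqn^\circ$ is finitely generated projective over $S$ by Proposition \ref{lem:etalestuff}(3), the module $M$ is finitely generated projective over $\oyr$. In characteristic zero the Reynolds projector $|\Gamma|^{-1}\sum_{\gamma \in \Gamma}\gamma$ realizes $M^\Gamma$ as an $\oyr$-linear direct summand of $M$, so $M^\Gamma$ is itself a finitely generated projective $\oyr$-module carrying a residual $\Kell$-action.

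Next I would descend along a $\Kell$-torsor. The smoothness assumption on $\mathcal{K}$ means that $\Kell$ acts freely on $X^{ss}$, and since $Y \to X$ is an étale, $\Kell$-equivariant base change, the same holds on $Y^{ss}$. Hence each $\Spec(\oyr) \to \Spec(\oyr^{\Kell})$ is a $\Kell$-torsor, under which $\Kell$-invariants implement an equivalence between $\Kell$-equivariant finitely generated projective $\oyr$-modules and finitely generated projective $\oyr^{\Kell}$-modules. Applying this equivalence to $M^\Gamma$ yields $(M^\Gamma)^{\Kell} = M^K$ as a finitely generated projective $\oyr^{\Kell}$-module, which gives the desired local freeness of $\Dqyk$ on $U_r$.

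The main technical content lies in the descent step, and rests squarely on the smoothness hypothesis on $\mathcal{K}$; without freeness of the $\Kell$-action one would obtain at best orbifold-type quotients and the invariants need not be locally free. As a consistency check, one can compute the rank of $\Dqyk$ at a closed point $\bar y \in U_r$: lifting $\bar y$ to $y \in \Spec(\oyr)$ with image $P' = (\lambda, \gamma) \in \Spec(S)$, the fiber of $M^\Gamma$ at $y$ is $(\D_{P'})^\Gamma$, shown in Proposition \ref{prop:fiberHamiltonian} to have dimension $\ell^{n-d}$. Torsor descent transports this into the same fiber dimension at $\bar y$, so $\Dqyk$ has constant rank $\ell^{n-d}$ -- compatible, as expected, with the Azumaya rank established in Theorem \ref{thm:Dqkazumaya}.
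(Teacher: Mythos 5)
Your proof is correct, but it takes a genuinely different route from the paper's. The paper argues pointwise: it identifies the fiber of $\Dqyk$ at $\pi_Y(P')$ with the $\Gamma$-invariants $(\Dqn^\circ \otimes_S k(P'))^\Gamma$ of the fiber over $P'$ (by the same orbit argument as in the proof of Theorem \ref{thm:Dqkazumaya}), invokes Proposition \ref{prop:fiberHamiltonian} to see that this dimension is the constant $\ell^{n-d}$, and concludes by the criterion that a coherent sheaf of constant rank on a reduced finite-type scheme is locally free. You instead work module-theoretically on the affine cover: the Reynolds projector exhibits $M^\Gamma$ as a direct summand of the finitely generated projective $\oyr$-module $M=\Dqn^\circ\otimes_S\oyr$ (using, correctly, that $\Gamma$ acts trivially on $\Zln$ and on the $\alpha_i$, hence on $S$ and $\oyr$, so its action on $M$ is $\oyr$-linear), and faithfully flat descent along the $\Kell$-torsor $\Spec(\oyr)\to\Spec(\oyr^{\Kell})$ then transports projectivity to $M^K=(M^\Gamma)^{\Kell}$. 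Both arguments hinge on the smoothness of $\mathcal K$ --- the paper to make every fiber of $\pi_Y$ a free $\Kell$-orbit, you to make the quotient map a torsor. Your route has the advantage of not needing the constant-rank criterion or the explicit fiber dimension at all (Proposition \ref{prop:fiberHamiltonian} enters only in your consistency check, which does come out right), at the cost of having to justify the torsor property: that a free action of the reductive group $\Kell$ with closed orbits on an affine variety gives a quotient map that is an fppf (indeed, by Luna, \'etale-locally trivial) torsor, so that taking invariants of equivariant finitely generated projective modules yields finitely generated projective modules over the invariant ring. That is a standard GIT fact of the same nature as what the paper already uses implicitly in the proof of Theorem \ref{thm:Dqkazumaya}, so the step is sound, though it deserves an explicit citation.
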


\begin{proof} Let $P'$ be a closed point of $Y^{ss}$, so $\Kell$ acts freely on the orbit of $P'$ in $Y^{ss}$ (recall that $\mathcal K$ is assumed to be smooth in this section). The same argument as in the proof of Theorem \ref{thm:Dqkazumaya} shows that the fiber of $\Dqyk$ at $\pi_Y(P')$ can be identified with the $\Gamma$-invariants of the fiber of $\D$ over $P'$: $\Dqyk_{\pi_Y(P')} \simeq (\D \otimes_S k(P'))^\Gamma.$ By Proposition \ref{prop:fiberHamiltonian}, this fiber has dimension $\ell^{n-d}$ over $\C$.  Since $\Yk$ is of finite type over $\C$,   a finitely generated coherent sheaf on $Y$ is locally free if and only if its  rank is constant on closed points. The result follows. \end{proof}

\begin{proof}[Proof of Theorem \ref{thm:DAetale}]  We define an isomorphism $\Xi: (f_\mathcal K)^* \Dqk \rightarrow \End_{Y(\mathcal K)}(\Dqyk)$ of sheaves on $Y(\mathcal K)$ as follows. Fix $r \in \O(X)_{(\chi\eell)^n}$ and abbreviate $\Dqn^\circ$ by $\D$. Set 
$$U = \Spec(\oxr^{\Kell}) \subseteq \Xkl, \quad \O(U) = \oxr^{\Kell},$$ $$V = \Spec(\oyr^{\Kell}) \subseteq Y(\mathcal K), \quad \text{and} \quad \O(V) = \oyr^{\Kell}.$$ 
There is an isomorphism of $\O(X)$-algebras $\D/ \D(\mu_{q,K} - \eta) \simeq \D \otimes_S \O(Y)$, which gives rise to an isomorphism of $\O(U)$-algebras
$$ (\D/ \D(\mu_{q,K}  - \eta) \otimes_{\O(X)} \O(X)[r\inv])^K \simeq (\D \otimes_S \O(Y)[r\inv])^K.$$
Left multiplication gives  a map of $\O(U)$-algebras:  $$ (\D/ \D(\mu_{q,K}  - \eta) \otimes_{\O(X)} \O(X)[r\inv])^K \longrightarrow \End_{\O(V)}((\D \otimes_S \O(Y)[r\inv])^K).$$ Tensoring with $\O(V)$, we obtain a map of  $\O(V)$-algebras:
\begin{equation}\label{eq:GITlocalmap} \Xi_V: (\D/ \D(\mu_{q,K}  - \eta) \otimes_{\O(X)} \O(X)[r\inv])^K \otimes_{\O(U)} \O(V) \longrightarrow \End_{\O(V)}((\D \otimes_S \O(Y)[r\inv])^K). \end{equation} 
By Proposition \ref{prop:YKstuff}, $f_\mathcal K\inv(U) = V$, and since $f_\mathcal K$ is surjective, we have that $f_\mathcal K(V) = U.$ Therefore, the source of this map defines the sections of $f_{\mathcal K}^*\Dqk$ over $V$. Similarly, the target of $\Xi_V$ coincides with the sections of $\End_{Y(\mathcal K)} (\Dqyk))$ over $V$. Therefore, $\Xi_V$ defines a morphism of sheaves on $V$. These morphisms are compatible among the members of the affine cover of Proposition \ref{prop:YKstuff}, and we obtain a morphism of sheaves $\Xi: (f_\mathcal K)^*\Dqk \rightarrow \End_{Y(\mathcal K)}(\Dqyk)$ on $Y(\mathcal K)$. 

Let $P' = (\lambda, \gamma) \in Y^{ss}$ and $P = (\lambda, \eta)$ as in Section \ref{subsec:Dqk:azumaya}. The map $\Xi_{\pi_Y(P')}$ induced on fibers is precisely the map $(\D_P)^\Gamma \rightarrow \End_\C( \D_{P'}^\Gamma)$ considered in the last statement of Proposition \ref{prop:fiberHamiltonian}, and hence is an isomorphism. Since $\Xi$ is a map of locally free coherent sheaves, it follows that $\Xi$ is an isomorphism. Proposition \ref{prop:AzumayaCartesian} now implies that $\Dqk$ splits along fibers of the moment map $\mu_H : \Xkl \rightarrow \Hell.$ \end{proof}

\newcommand{\YzK}{Y_0(\mathcal K)}

Let $\MzK\eell = \frakM(K, 0, \eta)\eell = \Spec(\O(X)^{\Kell})$ and $Y_0(\mathcal K)= Y(K, 0,\eta) = \Spec(\O(Y)^{\Kell})$ be the affinizations of $\Xkl$ and $\Yk$. There is a map $f_{0, \mathcal K} : \YzK \rightarrow \MzK\eell$ making the following diagram commute, where the vertical maps are the affinization maps: \[\label{diag:affinizations} \xymatrix{  \Yk \ar[r]^{f_\mathcal K} \ar[d] & \Xkl \ar[d]^{\nu\eell} \\  \YzK \ar[r]^{f_{0,\mathcal K}} & \MzK\ell } \]

\begin{prop} The diagram above is Cartesian, and the Azumaya algebra $\Dqk$ splits over fibers of the affinization map $\nu\eell: \Xkl \rightarrow \MzK\eell.$ \end{prop}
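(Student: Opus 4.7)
The proof reduces to two tasks: establishing the Cartesian property of the square, and then invoking Proposition \ref{prop:AzumayaCartesian} to deduce the splitting statement. The first task is the substantive one; the second is immediate.

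My plan for the Cartesian property is to prove the intermediate identification $\YzK \cong \MzK^{(\ell)} \times_{\Hell} H$. Once we have this, stacking Cartesian squares gives
\[
\Xkl \times_{\MzK^{(\ell)}} \YzK \;\cong\; \Xkl \times_{\MzK^{(\ell)}} \bigl( \MzK^{(\ell)} \times_{\Hell} H \bigr) \;\cong\; \Xkl \times_{\Hell} H \;\cong\; \Yk,
\]
where the last isomorphism is Proposition \ref{prop:YKstuff} (cf.\ diagram (\ref{diag:fAH})). To prove the intermediate identification, I would read off from (\ref{eq:oxoy}) that $\O(Y) \cong \O(X) \otimes_{\O(\Hell)} \O(H)$: the relations $y_i^\ell = \alpha_i^\ell$ together with $\prod_i y_i^{m_{ij}} = \eta_j$ are exactly the relations that present $\O(H)$ as being base-changed along the Frobenius $\Frl: H \to \Hell$, with $\O(\Hell) \subseteq \O(X)$ generated by the $\alpha_i^\ell$ subject to $\prod_i (\alpha_i^\ell)^{m_{ij}} = \eta_j^\ell$. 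Then I would pass to $\Kell$-invariants. Since (as recorded in Section \ref{subsec:Dqn:etale}) the $\Kell$-action on $\Spec(S) = \Spec(Z_n^\circ) \times_{\Tell} T$ is $k \cdot (\lambda, \gamma) = (k \cdot \lambda, \gamma)$, it is trivial on the $T$-factor and hence on the $H$-factor of the induced fiber-product decomposition. Thus $\Kell$-invariants commute with the base change, yielding $\O(Y)^{\Kell} \cong \O(X)^{\Kell} \otimes_{\O(\Hell)} \O(H)$, i.e., $\YzK \cong \MzK^{(\ell)} \times_{\Hell} H$.

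For the second assertion, Theorem \ref{thm:DAetale} gives that $f_{\mathcal K}: \Yk \to \Xkl$ is an \'etale splitting of $\Dqk$, and the Cartesian square just established fits the hypotheses of Proposition \ref{prop:AzumayaCartesian} with $B = \MzK^{(\ell)}$, $C = \YzK$, $\mu = \nu^{(\ell)}$, and the left vertical map the affinization $\Yk \to \YzK$. The conclusion is precisely that $\Dqk$ splits over fibers of $\nu^{(\ell)}$.

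The principal technical point to watch is the commutativity of $\Kell$-invariants with the base change $\O(X) \mapsto \O(X) \otimes_{\O(\Hell)} \O(H)$. This needs both (i) that $\Kell$ acts trivially on the $H$-factor of $Y = X \times_{\Hell} H$, which follows from the explicit formula in Section \ref{subsec:Dqn:etale}, and (ii) flatness of $\O(H)$ over $\O(\Hell)$, which holds because $\Frl$ is \'etale of degree $\ell^{n-d}$. These together ensure the tensor product of invariants equals the invariants of the tensor product, which is the one step in the argument where a mistake would be costly.
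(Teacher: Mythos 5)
Your proposal is correct and follows essentially the same route as the paper: both hinge on the identification $\O(Y)^{\Kell} = \O(X)^{\Kell} \otimes_{\O(\Hell)} \O(H)$ (i.e.\ $\YzK \cong \MzK\eell \times_{\Hell} H$), obtained from the presentation \eqref{eq:oxoy} together with the $\Kell$-invariance of the $y_i$, and then conclude via Proposition \ref{prop:AzumayaCartesian}. The only cosmetic difference is that the paper verifies the Cartesian property locally on the affine cover $\Spec(\oxr^{\Kell})$ using diagram \eqref{diag:fAH}, whereas you stack fiber products globally (which implicitly uses that $\mu_H$ factors through the affinization $\nu\eell$ — true, since the comoment map lands in $\Kell$-invariant global functions).
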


\begin{proof} Recall the descriptions of the algebras $\O(X)$ and  $\O(Y)$ in \ref{eq:oxoy}. Observe that each $y_i \in \O(Y)$ is $\Kell$-invariant. Thus, $\O(Y)^{\Kell} = \O(X)^{\Kell} \otimes_{\O(\Hell)} \O(H).$ Fix $r \in \O(X)_{\chi^n}$ for $n >1$. Then 
$$\O(X)[r\inv]^{\Kell} \otimes _{\O(X)^{\Kell}} \O(Y)^{\Kell} = \O(X)[r\inv]^{\Kell} \otimes_{\O(\Hell)} \O(H).$$ By the proof of Proposition \ref{prop:YKstuff} (specifically, diagram \ref{diag:fAH}), the right-hand side is isomorphic to $\O(Y)[r\inv]^{\Kell}$, so square above is Cartesian. The last statement follows from Proposition \ref{prop:AzumayaCartesian}. \end{proof}

\subsection{Quiver varieties}\label{subsec:Dqk:quiver}

Let $Q=(\mathcal V, \mathcal E)$ be a finite, loop-free quiver as is in Section \ref{subsec:Dqn:quiver}. Fix dimension vector $\mathbf d$ equal to one at all vertices. This data gives rise to a multiplicative quiver variety (see \cite{C-BS, DJ}), which is in fact a multiplicative hypertoric variety. The algebra $$\U_\eta = (\D_q(Q)/\D_q(Q)(\mu_{q,K}  -\eta))^K$$ coincides with the quantized multiplicative quiver variety $A_{\mathbf d}^\eta(Q)$ constructed in \cite{DJ}. We note that Boalch  has introduced a more general notion of a multiplicative quiver variety, see \cite{Boalch}.

\begin{rmk} For quiver varieties, the subtorus $K \subseteq (\C^\times)^\mathcal E$ is unimodular, so the resulting multiplicative hypertoric variety is smooth and affine for regular values of the moment map.  \end{rmk}

\begin{example} Let $Q$ be the affine  $A_{n-1}$ quiver considered in Example \ref{ex:anquiver}. The central reduction  $\U_1 = \D_q(Q)^K/ \D_q(Q)^K (\mu_{q,K}  - 1)$ of the hypertoric quantum group $\D_q(Q)^K$ is generated by elements $A^{\pm 1}, B, C$ subject to the relations:
$$ A B = q^2 B A, \qquad  A C = q^{-2} C A$$
$$BC = q^{\frac{n(n-1)}{2}} (A -1)^n, \qquad CB = q^{\frac{n(n-1)}{2}} (q^2 A -1)^n.$$
The algebra $\U_1$ has an action on $\C[t]$ by difference operators given by:
$$ (A \cdot f)(t) = f(q^2t) \qquad  (B \cdot f)(t) =  t f(t)$$
$$ (C \cdot f)(t) = q^{\frac{n(n-1)}{2}} \frac{\sum_{k=0}^n (-1)^{n-k} {n \choose k} f(q^{2k} t)}{t}$$

Suppose $q$ is a primitive $\ell$-th root of unity, where $\ell > 1$ is odd. Define a commutative algebra $Z$ with generators $a^{\pm 1}$, $b$, and $c$, subject to the single relation $bc = (a - 1)^n$. One checks that:

\begin{enumerate}
 \item The algebra $Z$ embeds as a central subalgebra of $\U_1$ via the map $a \mapsto A^\ell$, $b \mapsto B^\ell$, and $c \mapsto C^\ell$.
 \item The algebra $Z$ coincides with the algebra of global functions on the affine multiplicative quiver variety for $Q$ with moment map parameter 1: $\cM_0 = {\mathfrak M}(K,1,0)$.
 \item The algebra $Z$ has an \'etale cover $\tilde Z$ generated by elements $T$, $b$, and $c$, subject to the relation $bc = (T^\ell - 1)^n$. The map from $Z$ to $\tilde Z$ sends $a$ to $T^\ell$.
\end{enumerate}

Let $\cM = {\mathfrak M}(K, 1, \chi)$ be the multiplicative quiver variety for $Q$ with moment map parameter 1 and character $\chi$ chosen so that $\cM$ is smooth. Assuming $\chi$ can be chosen so that Conjecture \ref{conj:globalsections} holds, the global sections of $\cM$ can be identified with $Z$ (as opposed to a localization thereof). Moreover, there is an Azumaya algebra $\cA$ on $\cM$ whose global sections is $\U_1$.  Let $\tilde\cM = Y(K, 1, \chi)$ be the \'etale cover of $\cM$, defined via the construction of Section \ref{subsec:Dqk:etale}.  The natural map $\tilde \cM \rightarrow \cM$ is an \'etale splitting of $\cA$, and the algebra of global functions on $\tilde \cM$ can be identified with $\tilde Z$. 
\end{example}

\subsection{A localization theorem}\label{subsec:Dqk:localization}
\newcommand{\projaffine}{\nu}
\newcommand{\X}{\mathfrak M}
\newcommand{\globalsec}{A_\eta}

For an abelian category $\A$, write $D(\A)$ and $D^b(\A)$ for its derived category and bounded derived category, respectively. Let $\mathcal T$ be a triangulated category with infinite coproducts, for example $\mathcal T = D(\A)$. An object $P$ of $\mathcal T$ is {\it  compact} if the functor $\Hom_{\mathcal T}(P, - )$ preserves sums. The subcategory $\langle P \rangle$ generated by a compact object $P$ is the full subcategory of $\mathcal T$ consisting of objects $M$ such that $\Hom_{\mathcal T}(P[n], M) \neq 0$ for some $n \in \Z$ whenever $M \neq 0$. 

Throughout this section, assume that $\mathcal K$ is smooth. Let $\globalsec$ be the algebra of global sections of $\Dqk$. As remarked in Section \ref{subsec:Dqk:def}, this algebra is a localization of a central reduction $\U_\eta = \left(\Dqn/\Dqn(\mu_{q,K} - \eta)\right)^K$ of a hypertoric quantum group, and conjecturally we frequently have an isomorphism $A_\eta \simeq \U_\eta$. Let $\globalsec\dmod$ denote the  category of finitely-generated $\globalsec$-modules. The  category of locally finitely generated (coherent) $\Dqk$-modules is denoted by  $\Dqk\dmod$. Since $\Xkl$ is a finite-dimensional variety, the functor of global sections $$\Gamma : \Dqk\dmod \rightarrow \globalsec\dmod$$ has finite homological dimension, and so the derived functor $R\Gamma$ descends to the bounded derived categories,  $R\Gamma : D^b(\Dqk\dmod) \rightarrow D^b(\globalsec\dmod)$. The localization functor $\Loc : \globalsec\dmod \rightarrow \Dqk\dmod$, defined by $V \mapsto   \Dqk \otimes_{\globalsec}  V$, is left adjoint to $\Gamma$. Let $\mathcal L  : D^b(\globalsec\dmod) \rightarrow D(\Dqk\dmod)$ denote the left derived functor of $\Loc$.

\begin{theorem}\label{thm:localization} Assume $\mathcal K$ is smooth. 
\begin{enumerate}
\item If $\eta$ is a regular value of the moment map $\mu_{K}$, then the functor $\Gamma$ defines an equivalence of abelian categories: $$\Gamma : \Dqk\dmod \stackrel{\sim}{\longrightarrow} \globalsec\dmod.$$

\item If the functor $\Loc$ has finite homological dimension, then $R\Gamma$ defines an equivalence of triangulated categories: $$R\Gamma: D^b\left(\Dqk\dmod\right) \stackrel{\sim}{\longrightarrow} D^b\left(\globalsec\dmod\right).$$ %whose inverse if $ \mathcal L.$ 
\item For any $\eta$, then $R\Gamma$ defines an equivalence of triangulated categories: $$ R\Gamma : \left\langle \Dqk \right\rangle \stackrel{\sim}{\longrightarrow} D(\globalsec\dmod),$$ where $\langle \Dqk \rangle$ is the subcategory of $D(\Dqk\dmod)$ generated by the compact object $\Dqk$. 
\end{enumerate}
\end{theorem}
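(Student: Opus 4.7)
The proof follows the general derived localization strategy of \cite{BMR, BeKa} for Azumaya algebras on symplectic resolutions. The key geometric inputs are (i) the Azumaya property of $\Dqk$ on the smooth symplectic variety $\Xkl$ (Theorem \ref{thm:Dqkazumaya}), together with the projective resolution of singularities $\nu\eell : \Xkl \to \MzK\eell$, and (ii) the identification $\Dqk \simeq (\Frl)_*\O_{\Xk}$ of coherent sheaves (Proposition \ref{prop:Dqkbasics}), along with the \'etale splittings of Section \ref{subsec:Dqk:etale}.

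I would first establish part (3), which is the most structural statement. Using Proposition \ref{prop:Dqkbasics} and the finiteness of $\Frl$, one computes
$$H^i(\Xkl, \Dqk) \;=\; H^i(\Xk, \O_{\Xk}) \;=\; H^i(\MzK, \nu_*\O_{\Xk}).$$
Since $\nu : \Xk \to \MzK$ is a symplectic resolution of an affine variety, Kaledin's rational singularities theorem yields $R^i\nu_*\O_{\Xk} = 0$ for $i>0$, and the affineness of $\MzK$ kills higher cohomology of $\nu_*\O_{\Xk}$. Hence $R\Gamma(\Dqk) \simeq A_\eta$ is concentrated in degree zero. As $\Dqk$ is tautologically a compact generator of $\langle \Dqk \rangle$ with $R\End_{\Dqk}(\Dqk) = A_\eta$, a standard Morita-type theorem for triangulated categories with a compact generator (in the sense of Keller) yields the equivalence $\langle \Dqk \rangle \stackrel{\sim}{\longrightarrow} D(A_\eta\dmod)$, with quasi-inverse $\mathcal L = - \otimes^L_{A_\eta} \Dqk$.

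Part (2) follows formally: the hypothesis that $\Loc$ has finite homological dimension means $\mathcal L$ preserves boundedness, and since $R\Gamma$ also has finite homological dimension (noted in the setup), the unit and counit of the adjunction $(\mathcal L, R\Gamma)$---which are isomorphisms on $\langle \Dqk \rangle$ by part (3)---restrict to isomorphisms on the bounded derived categories. For part (1), one needs the stronger exactness of $\Gamma$ on the abelian category $\Dqk\dmod$, i.e.\ the vanishing $H^i(\Xkl, \mathcal M) = 0$ for every coherent $\Dqk$-module $\mathcal M$ and $i>0$, not merely for $\mathcal M = \Dqk$. The strategy for regular $\eta$ is to pull back along the \'etale splitting $f_\mathcal K : Y(\mathcal K) \to \Xkl$ (Theorem \ref{thm:DAetale}) to realize coherent $\Dqk$-modules as Galois-equivariant coherent sheaves on $Y(\mathcal K)$ (via the splitting bundle $\Dqyk$), and then apply rational singularity vanishing on the base-changed resolution $Y(\mathcal K) \to Y_0(\mathcal K)$; the regularity of $\eta$ provides the genericity needed, as in \cite[\S 3]{BMR}.

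The main obstacle will be the cohomology vanishing $H^i(\Xkl, \mathcal M) = 0$ for general coherent $\Dqk$-modules $\mathcal M$ in part (1). Unlike the case $\mathcal M = \Dqk$, this does not reduce directly to $\O$-cohomology on $\Xk$, so one must simultaneously leverage the splitting of $\Dqk$ over fibers of $\nu\eell$ (from the end of Section \ref{subsec:Dqk:etale}), the Galois-equivariant descent along the \'etale cover $Y(\mathcal K) \to \Xkl$, and the precise genericity encoded by $\eta$ being a regular value of $\mu_K$. Parts (2) and (3), by contrast, reduce cleanly to the sheaf-theoretic identity $\Dqk \simeq (\Frl)_*\O_{\Xk}$ and a formal Morita-type argument.
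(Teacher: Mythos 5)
Your treatment of parts (2) and (3) matches the paper's proof in all essentials: the cohomology computation $R\Gamma(\Xkl,\Dqk)\simeq A_\eta$ via $\Dqk\simeq(\Frl)_*\O_{\Xk}$, vanishing of higher direct images along the symplectic resolution $\nu$, and affineness of $\MzK$, followed by a Morita-type theorem for the compact generator $\Dqk$ and a formal bounded-restriction argument for (2). (The paper derives the vanishing $R^i\nu_*\O_{\Xk}=0$ from Grauert--Riemenschneider plus $\omega_{\Xk}=\O_{\Xk}$ rather than citing Kaledin, but this is immaterial.)

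The genuine gap is in part (1). You correctly observe that an equivalence of abelian categories would follow from exactness of $\Gamma$ on all coherent $\Dqk$-modules, but you then propose an elaborate descent argument through the \'etale splitting and explicitly leave its key vanishing statement as an unresolved ``main obstacle.'' This misses the much simpler point that makes part (1) immediate: if $\eta$ is a \emph{regular value} of $\mu_K$, then $K$ acts locally freely on the entire fiber $\mu_K^{-1}(\eta)$, so every point is stable, the semistable locus is all of $X$, and the GIT quotient coincides with the affine quotient. Hence $\Xk$ and $\Xkl$ are themselves affine, and $\Gamma$ is an equivalence between coherent sheaves of $\Dqk$-modules and finitely generated modules over $\Gamma(\Dqk)=A_\eta$ for the standard reason that quasi-coherent sheaves of modules over a coherent sheaf of algebras on an affine scheme are determined by their global sections. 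No cohomology vanishing for general $\mathcal M$, no Galois-equivariant descent, and no appeal to the splitting bundle is needed. As written, your part (1) is not a proof but a program, and the program is aimed at a harder statement (exactness of $\Gamma$ on a possibly non-affine $\Xkl$) than the hypothesis actually requires.
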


\begin{proof} Abbreviate $\Xk$ by ${\X}$, $\Xkl$ by ${\X}\eell$, and $\Dqk$ by $\D$. If $\eta$ is a regular value of the moment map $\mu_{K}$, then the varieties $\X$ and $\X\eell$ are affine, and the first statement follows. 

Let $\projaffine: \X \rightarrow \X_0 := \X(K, \eta,0)$ be the map to the affine quotient (note the lack of $(\ell)$). Since $\projaffine$ is birational, the relative Grauert-Riemenschneider theorem implies that  $R^i \projaffine_* \omega_{\X} = 0$ for $i >0$, where $\omega_{\X}$ is the dualizing sheaf \cite[p.\ 59]{Esnault:1992aa}. Since $\X$ is symplectic, $\omega_{\X} = \O_{\X}$, and we have that $R^i \projaffine_*  \O_{\X} = 0$ for $i >0$. Recall from Proposition \ref{prop:frobenius} that the Frobenius map $F_\ell : \X \rightarrow \X\eell$ is finite, so has no higher direct images. Using these observations, together with Proposition \ref{prop:assocgrad} and the fact that ${\X}_0$ is an affine scheme, we see that the sheaf $\D$ has no higher global sections:
\begin{align*} R\Gamma({\X}\eell, \D) &= R\Gamma({\X}\eell, F_*\O_{\X}) = R\Gamma({\X},\O_{\X}) = R\Gamma({\X}_0, \projaffine_*\O_{\X})  = \Gamma({\X}_0, \projaffine_*\O_{\X}) \\ &= \Gamma({\X}, \O_{\X}) = \Gamma({\X}\eell, F_*\O_{\X}) = \Gamma({\X}\eell, \D) = \globalsec. \end{align*} 
The proof of the second statement now follows from the methods developed in \cite{BMR} and \cite{BeKa}, specifically, Lemmas 3.5.2 and 3.5.3 of \cite{BMR}. 

We claim that the functor $R\Gamma$ is representable by $\D$. Indeed, for $\mathcal N \in D(\D\dmod)$, we have $$\Hom_{D(\D\dmod)} ( \D, \mathcal N) = R\Gamma(\X\eell, R\mathcal{H}{\rm om}_{\D}(\D, \mathcal N)) = R\Gamma(\X\eell, \mathcal N).$$ In particular, the endomorphism algebra of $\D$ is  $R\Gamma(\X\eell, \D) = \globalsec.$ Since  $\D$ is a compact generator of $\langle\D\rangle$ and $\globalsec$ is a compact generator of $D(\globalsec\dmod)$, the third statement is a consequence of general results from Morita theory for triangulated categories, see  e.g.\ \cite[Proposition 3.10]{Schwede}. \end{proof}

\begin{rmk} The first statement of the theorem is true for generic values of $\eta$ since regular values of the moment map are dense. An interesting problem is to provide explicit conditions, in terms of $\eta$ and $\chi$, for when the functor $\Loc$ has finite homological dimension. It is expected that a version of the conjectures and examples in \cite[4.4.8-4.4.11]{Stadnik} hold in the multiplicative setting. \end{rmk}

%%%%%%%%%%%%%%%%%%%%%%%%%%%%%%

%\addcontentsline{toc}{section}{References}
\bibliographystyle{plain}

\begin{thebibliography}{bib}

\bibitem[AMM]{AMM} A.~Alekseev, A.~Malkin, E.~Meinrenken. 
\newblock Lie group valued moment maps.
\newblock {\it Journal of Differential Geometry},
\newblock 48 (1998), 445--495. 

\bibitem[BaKr]{BaKr}
E.~Backelin, K.~Kremnitzer.
\newblock Localization for quantum groups at a root of unity.
\newblock {\it Journal of the AMS},
\newblock 21 (2008), 1001--1018.

\bibitem[BDMN]{BDMN}
G.~Bellamy, C.~Dodd, K.~McGerty, T.~Nevins.
\newblock Categorical Cell Decomposition of Quantized Symplectic Algebraic Varieties.
\newblock Preprint: ar{X}iv:1311.6804, 2014. To appear in {\it Geometry \& Topology}.

\bibitem[BeKu]{BeKu}
G.~Bellamy, T.~Kuwabara.
\newblock On Deformation Quantizations of Hypertoric Varieties.
\newblock {\it Pacific Journal of Mathematics},
\newblock 260 (2012), no.\ 1, 89--127. 

\bibitem[BeKa]{BeKa}
R.~Bezrukavnikov, D.~Kaledin.
\newblock McKay equivalence for symplectic quotient singularities.
\newblock {\it Proc. of the Steklov Inst. of Math.}
\newblock 246 (2004), 13-33.

\bibitem[BMR]{BMR}
R.~Bezrukavnikov, I.~Mirkovi{\'c}, and D.~Rumynin.
\newblock Localization of modules for a semisimple Lie algebra in prime characteristic.
\newblock {\em Annals of Mathematics},
\newblock 167 (2008), 945--991.

\bibitem[BFG]{BFG}
R.~Bezrukavnikov, M.~Finkelberg, V.~Ginzburg.
\newblock Cherednik algebras and Hilbert schemes in characteristic $p$.
\newblock {\em Representation Theory},
\newblock 10 (2006), 254-298.
\newblock  With an appendix by Pavel Etingof.

\bibitem[BD]{BD}
R.~Bielawski, A.~Dancer.
\newblock The geometry and topology of toric hyperkaehler manifolds.
\newblock {\it Comm. Anal. Geom.}
\newblock 8 (2000), 727--759. 

\bibitem[Bo]{Boalch}
P.~Boalch.
\newblock Global Weyl groups and a new theory of multiplicative quiver varieties.
\newblock {\it Geometry \& Topology} 19 (2015) 3467--3536. 

\bibitem[BLPW12]{BLPW:catO}
T.~Braden, A.~Licata, N.~Proudfoot, B.~Webster.
\newblock Hypertoric category $\mathcal{O}$ 
\newblock {\it Adv.\ Math}.
\newblock 231 (2012) 1487--1545.

\bibitem[BLPW14]{BLPW:symres}
T.~Braden, A.~Licata, N.~Proudfoot, B.~Webster. 
\newblock  Quantizations of conical symplectic resolutions II: category $\mathcal O$ and symplectic duality. 
\newblock Preprint: ar{X}iv:1407.0964. 2014. To appear in {\it Ast\'erisque}.

\bibitem[BPW]{BPW}
T.~Braden, N.~Proudfoot, B.~Webster. 
\newblock Quantizations of conical symplectic resolutions I: local and global structure.
\newblock Preprint: ar{X}iv:1208.3863. 2014. To appear in {\it Ast\'erisque}.

\bibitem[BG]{BrownGoodearl}
K.A.~Brown, K.R.~Goodearl.
\newblock Lectures on Algebraic Quantum Groups.
\newblock{\em Springer Verlag}, 2002.

\bibitem[C-BS]{C-BS}
W.~Crawley-Boevey, P.~Shaw.
\newblock Multiplicative preprojective algebras, middle convolution and the  {D}eligne-{S}impson problem.
\newblock {\em Adv. Math}, 201 (2006), 180--208.

\bibitem[EV]{Esnault:1992aa} H.~Esnault, E.~Viehweg. 
\newblock Lectures on Vanishing Theorems. 
\newblock {\em Birkh{\"a}user Verlag}, {O}berwolfach Seminars, Vol. 20, 1992

\bibitem[GZ]{GZ}
A.~Giaquinto, J.~Zhang.
\newblock Quantum Weyl algebras.
\newblock {\em J. Algebra}, 176 (1995), 861--881.

%\bibitem[H]{Hartshorne:1977qf}
%R.~Hartshorne.
%\newblock {\em Algebraic Geometry}.
%\newblock Springer, 1977.

\bibitem[HS]{HauselStrumfels}
T.~Hausel, B.~Sturmfels.
\newblock Toric hyperkaehler varieties.
\newblock {\it Documenta Mathematica},
\newblock 7 (2002), 495-534.

\bibitem[J]{DJ}
D.~Jordan.
\newblock Quantized multiplicative quiver varieties.
\newblock {\it Advances in Mathematics},
\newblock 250 (2014), 420--466.

%\bibitem{Kassel:1995uq} C.~Kassel. 
%\newblock {\em Quantum Groups}. 
%\newblock Springer-Verlag, 1995.

\bibitem[LY]{LY}
J.~Levitt, M.~Yakimov.
\newblock Quantized Weyl algebras at roots of unity
\newblock Preprint: ar{X}iv:1606.02121, 2016.

\bibitem[MS]{McBreenShenfeld}
M.~McBreen, D.~Shenfeld.
\newblock Quantum cohomology of hypertoric varieties.
\newblock {\it Letters in Mathematical Physics},
\newblock 103 (2013), 11, 1273--1291.

\bibitem[MN]{McGertyNevins}
K.~McGerty, T.~Nevins.
\newblock Derived equivalence for quantum symplectic resolutions.
\newblock {\it Selecta Mathematica},
\newblock 20 (2014), 2, 675--717.

\bibitem[Mi]{Milne:1980bb}
J.~S.~Milne.
\newblock {\em{\'E}tale {C}ohomology}.
\newblock Princeton University Press, 1980.

%\bibitem[Mu]{Mukai:2003aa}
%S.~Mukai.
%\newblock {\em An Introduction to Invariants and Moduli}.
%\newblock Cambridge University Press, 2003.

\bibitem[MvdB]{MvdB}
I.M.~Musson, M.~van den Bergh.
\newblock {\em Invariants Under Tori of Rings of Differential Operators and Related Topics}.
\newblock American Mathematical Society, 1998.

\bibitem[Sa]{Saltman}
D.~Saltman.
\newblock {\em Lectures on Division Algebras}.
\newblock Issue 94 of Regional Conference Series. American Mathematical  Society, 1999.

\bibitem[Sc]{Schwede}
S.~Schwede.
\newblock {\em Morita theory in abelian, derived and stable model categories},
\newblock in {\em Structured Ring Spectra}, 33--86, London Math. Soc. Lecture Notes 315,
Cambridge Univ. Press, 2004.

\bibitem[St]{Stadnik}
T.~J. Stadnik.
\newblock {\'E}tale splittings of certain {A}zumaya algebras on toric and
  hypertoric varieties in positive characteristic.
\newblock {\it Advances in Mathematics},
\newblock 233 (2013), 268--290.

\bibitem[VV]{VV} 
M.~Varagnolo, E.~Vasserot.
\newblock Double affine Hecke algebras at roots of unity.
\newblock {\it Representation Theory (AMS)},
\newblock 14 (2006),  no.\ 15, 510--510.

\end{thebibliography}

\end{document}